\numberwithin{equation}{section}
\theoremstyle{plain}
\newtheorem{theorem}[equation]{Theorem}
\newtheorem{lemma}[equation]{Lemma}
\newtheorem{corollary}[equation]{Corollary}
\newtheorem{proposition}[equation]{Proposition}
\newtheorem{hyp}[equation]{Hypotheses}
\newtheorem{hypsing}[equation]{Hypothesis}
\newtheorem*{theorem*}{Theorem}
\newtheorem*{lemma*}{Lemma}
\newtheorem*{corollary*}{Corollary}
\newtheorem*{proposition*}{Proposition}
\newtheorem*{hyp*}{Hypotheses}
\newtheorem*{hypsing*}{Hypothesis}
\theoremstyle{definition}
\newtheorem{defn}[equation]{Definition}
\newtheorem*{defn*}{Definition}
\theoremstyle{remark}
\newtheorem{rem}[equation]{Remark}
\newtheorem*{rem*}{Remark}
\newtheorem*{rems*}{Remarks}
\newcommand{\N}{\ensuremath{\mathbb{N}}}
\newcommand{\calO}{\ensuremath{\mathcal{O}}}
\newcommand{\calL}{\ensuremath{\mathcal{L}}}
\newcommand{\calF}{\ensuremath{\mathcal{F}}}
\newcommand{\proj}[2]{\ensuremath{\mathbb{P}_{#1}^{#2}}}
\newcommand{\twomat}[4]{\begin{pmatrix} #1 & #2 \\ #3 & #4 \end{pmatrix}}
\newenvironment{proofof}[1]{\par
  \pushQED{\qed}%
  \normalfont \topsep6\p@\@plus6\p@\relax
  \trivlist
  \item[\hskip\labelsep
        \bfseries
    Proof of #1\@addpunct{.}]\ignorespaces
}{%
  \popQED\endtrivlist\@endpefalse
}
\begin{document}
\title{Cocycle twists of 4-dimensional Sklyanin algebras}
\author{Andrew Davies}
\address{School of Mathematics\\ University of Manchester\\  Manchester\\  United Kingdom\\  M13 9PL}
\email{andrewpdavies@gmail.com}
\subjclass{14A22, 16S35, 16S38, 16W22}
\keywords{Cocycle twists, noncommutative geometry, Sklyanin algebras}
\date{\today}

\begin{abstract}
We study cocycle twists of a 4-dimensional Sklyanin algebra $A$ and a factor ring $B$ which is a twisted homogeneous coordinate ring. Twisting such algebras by the Klein four-group $G$, we show that the twists $A^{G,\mu}$ and $B^{G,\mu}$ have very different geometric properties to their untwisted counterparts. For example, $A^{G,\mu}$ has only 20 point modules and infinitely many fat point modules of multiplicity 2. The ring $B^{G,\mu}$ falls under the purview of Artin and Stafford's classification of noncommutative curves, and we describe it using a sheaf of orders over an elliptic curve.
\end{abstract}

\maketitle

\section{Introduction}\label{sec: introduction}
One of the major open problems in noncommutative projective algebraic geometry is the classification of noncommutative \proj{}{3}'s, or their algebraic counterparts, noncommutative AS-regular algebras. Such algebras have been extensively studied in the previous 20 years and Stafford and Van den Bergh's survey paper \cite{stafford2001noncommutative} provides a good overview of the area. The main aim of this paper is to introduce a family of such algebras possessing significantly different properties from those known previously. The algebras we study are also related to Zhang's construction of twisted algebras \cite{zhang1998twisted}.

Among noncommutative AS-regular algebras some of the most interesting are the 4-dimensional Sklyanin algebras, studied in \cite{smith1992regularity, smith1994center} (see \S\ref{subsec: 4dimsklyanin} for a definition via generators and relations). Let us denote such an algebra by $A$. The structure of these algebras was described in \cite{smith1994center} via a factor algebra $B=B(E,\sigma,\mathcal{L})$, known as a twisted homogeneous coordinate ring over an elliptic curve $E$. Here $\sigma$ is an automorphism of $E$, while $\mathcal{L}$ is a very ample invertible sheaf. The curve $E$ parameterises the \emph{point modules} over $A$. Such a module is $\N$-graded, has Hilbert series $1/(1-t)$ and is generated in degree 0. Interestingly, much of the structure and many of the properties of $A$ are determined by the geometric triple $(E,\sigma,\mathcal{L})$ \cite{levasseur1993modules,smith1994four,stafford2001noncommutative}. When $\sigma$ has finite order the properties of $A$ have a different flavour; we will restrict our study to the case in which it has infinite order.

In the survey paper \cite{odesskii2002elliptic}, Odesski{\u\i} makes some brief remarks about a variant of $A$ which we label $A'$ for now. One can construct $A'$ by first defining an action of the Klein four-group $G$ on both $A$ and the matrix ring $M_2(k)$, where $k$ is the base field. One can then define
\begin{equation}\label{eq: odesskiiintro}
A':= (A \otimes_k M_2(k))^G.
\end{equation}
Thus $A'$ is an invariant ring under the action of $G$ that is induced on the tensor product of $A$ and $M_2(k)$.

The algebra $A'$ will be one of the major objects of study in this paper. We first show that $A'$ can be  described as a cocycle twist of $A$ in the sense of the companion paper \cite{davies2014cocycle1}. This description allows us to use results from that paper to conclude that $A'$ has several nice properties, as the following results demonstrates.
\begin{proposition}\label{prop: thmintro}
Let $A$ denote a 4-dimensional Sklyanin algebra associated to the elliptic curve $E$ and automorphism $\sigma$ which has finite order. Then $A'$ can be described as a cocycle twist of the form $A^{G,\mu}$ for some group $G$ and 2-cocycle $\mu$. Furthermore, it is a noetherian AS-regular domain of dimension 4.
\end{proposition}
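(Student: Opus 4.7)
The plan is to split the proposition into two parts: first, identify $A'$ with a cocycle twist $A^{G,\mu}$; second, deduce the ring-theoretic properties from those of $A$ together with the general theory of such twists developed in the companion paper \cite{davies2014cocycle1}.

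For the first part, I would begin by making explicit the action of the Klein four-group $G$ on each tensor factor. On $M_2(k)$ the action is by conjugation by two anticommuting involutions (for instance matrices playing the role of the Pauli matrices $\sigma_{x}$ and $\sigma_{z}$); this exhibits $M_2(k)$ as a twisted group algebra $k^{\mu}[G]$ for a non-degenerate $2$-cocycle $\mu\in Z^{2}(G,k^{*})$. On $A$ the action comes from the well-known involutions of the $4$-dimensional Sklyanin algebra that send $x_{i}\mapsto\pm x_{i}$, which together generate a faithful $G$-action by graded algebra automorphisms. Since $G$ is a finite abelian group (and $\widehat{G}\cong G$), this is equivalent to a $\widehat{G}$-grading on $A$, which is the input data required for a cocycle twist.

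The key identification to establish is then the general isomorphism
$$
(A\otimes_{k} k^{\mu}[G])^{G}\;\cong\;A^{G,\mu},
$$
where the right-hand side is the cocycle twist in the sense of \cite{davies2014cocycle1}. On a pure homogeneous tensor $a\otimes u_{g}$ with $a$ in the $g$-graded component of $A$, the invariant subalgebra inherits precisely the twisted multiplication $a\ast_{\mu}b=\mu(g,h)\,ab$, so picking out the $G$-invariants of $A\otimes M_{2}(k)$ reproduces the underlying vector space of $A$ equipped with the twisted product. I expect the bookkeeping here---matching the $G$-action on $A$ with the corresponding $\widehat{G}$-grading, checking that the cocycle condition is respected, and showing that the natural map is bijective degree by degree---to be the main obstacle, but once the framework of \cite{davies2014cocycle1} is in place it should be essentially formal.

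For the ``furthermore'' clause, I would invoke results from \cite{davies2014cocycle1} stating that cocycle twists by a finite abelian group preserve the noetherian property, AS-regularity, GK-dimension, and the domain property (the last typically via the induced equivalence of graded module categories between $A$ and $A^{G,\mu}$, or by passing to associated graded rings). Since $A$ is itself a noetherian AS-regular domain of GK-dimension $4$ by \cite{smith1992regularity,smith1994center}, each of these properties transfers directly to $A^{G,\mu}=A'$, completing the proposition.
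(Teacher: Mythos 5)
Your first part---identifying $A'=(A\otimes_k M_2(k))^G$ with the cocycle twist $A^{G,\mu}$ by recognising $M_2(k)$ as a twisted group algebra $kG_{\mu}$ and matching the $G$-action on $A$ with the corresponding grading---is essentially the route the paper takes, via Bazlov--Berenstein's \cite[Lemma~3.6]{bazlov2012cocycle}. The derivation of the noetherian property and AS-regularity from the corresponding properties of $A$ using the transfer results of \cite{davies2014cocycle1} is also in line with the paper's Theorem~\ref{theorem: 4sklytwistprops}.

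However, there is a genuine gap in your treatment of the domain property, and the proposed justification would not work. Cocycle twisting does \emph{not} preserve the domain property: the paper itself produces a counterexample in Remark~\ref{rem: bgmunilp}, where $B$ is a domain but its cocycle twist $B^{G,\mu}$ contains nonzero nilpotents in degree $1$. Moreover, a cocycle twist does \emph{not} induce an equivalence of graded module categories between $A$ and $A^{G,\mu}$ (that is the hallmark of Zhang twists, not cocycle twists); indeed such an equivalence is impossible here, since $A$ has a one-dimensional family of point modules while $A^{G,\mu}$ has only $20$ (Theorem~\ref{thm: finitepointscheme}). Passing to associated graded rings does not help either, since the $G$-grading and the cocycle interact nontrivially with any filtration. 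The correct argument, used in the paper's proof of Theorem~\ref{theorem: 4sklytwistprops}, is indirect: once one knows that $A^{G,\mu}$ is AS-regular of global dimension $4$ with the correct Hilbert series and the Auslander/Cohen--Macaulay properties, it is automatically a domain by \cite[Theorem~3.9]{artin1991modules} (Artin--Tate--Van den Bergh). Your proof should replace the ``transfer of the domain property'' step with this appeal to the structure theory of AS-regular algebras.
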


We defer the definition of cocycle twists and 2-cocycles until \S\ref{subsec: cocycletwists}. Let us use the notation $A^{G,\mu}$ instead of $A'$ henceforth. 

The properties that $A^{G,\mu}$ possesses by Proposition \ref{prop: thmintro} are interesting in their own right, but perhaps more remarkable are some of its geometric properties, especially when contrasted with those of $A$. As the well as point modules---for which $A$ has a family parameterised by $E$---we also consider fat point modules, which are a natural generalisation. Results of Smith and Staniszkis in \cite{smith1993irreducible} show that $A$ possesses 4 isomorphism classes of fat point modules of multiplicity 2 when $\sigma$ has infinite order. One has the following result for the cocycle twist, comprising elements of Proposition \ref{prop: sklyaninfatpoints} and Theorem \ref{thm: finitepointscheme}.
\begin{theorem}
In contrast to $A$, $A^{G,\mu}$ has only 20 point modules. Furthermore, $A^{G,\mu}$ has a family of fat point modules of multiplicity 2 parameterised by $E^G$, the curve whose underlying structure comes from the orbit space of $E$ under a natural $G$-action.
\end{theorem}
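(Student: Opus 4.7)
The plan is to invoke the module-theoretic correspondence from the companion paper \cite{davies2014cocycle1} relating simple graded modules of $A^{G,\mu}$ to $(G,\mu)$-equivariant simple graded modules of $A$, and apply it to the known classification of point modules and multiplicity-$2$ fat points of $A$ (the latter from \cite{smith1993irreducible}). Under such a correspondence, each simple graded $A^{G,\mu}$-module arises from a $G$-orbit of simple graded $A$-modules equipped with the choice of an irreducible projective $\mu$-representation of the stabiliser, and its Hilbert series equals the orbit size times the dimension of the projective representation times the multiplicity of the underlying $A$-module, divided by $(1-t)$.

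To count the point modules of $A^{G,\mu}$---those with Hilbert series $1/(1-t)$---I would enumerate the configurations producing this Hilbert series. The nontrivial cocycle on the Klein four-group admits only a single irreducible projective representation, of dimension $2$ (equivalently, $k^\mu G\cong M_2(k)$), so any orbit whose stabiliser contains a subgroup on which $\mu$ is cohomologically non-trivial contributes in multiplicities divisible by $2$. Point modules must therefore arise from configurations whose stabiliser restricts $\mu$ trivially, together with the additional character data this allows. Combining a careful case analysis of the $G$-action on the point scheme $E$ with the $G$-action on the four multiplicity-$2$ fat points of $A$ should yield precisely $20$ point modules.

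For the family of multiplicity-$2$ fat points parameterised by $E^G$, the natural source is a generic $G$-orbit of point modules of $A$: summing the point modules in the orbit and equipping the sum with its canonical $(G,\mu)$-equivariant structure gives, via the correspondence, a simple $A^{G,\mu}$-module of Hilbert series $2/(1-t)$ whose isomorphism class depends only on the image $[p]\in E/G$ of the orbit. Equipping $E/G$ with its natural scheme structure $E^G$ then realises the family; it remains to check that the construction extends flatly over all of $E^G$ rather than merely a dense open subset.

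The central obstacle is the precise handling of special orbits. Fixing the count of point modules at exactly $20$ rather than a neighbouring integer, and ruling out stray multiplicity-$2$ fat points outside the family constructed above, both require delicate bookkeeping of how $\mu$ restricts to stabilisers at special orbits---particularly the $G$-fixed locus of $E$ and the $G$-orbits on the set of exceptional $A$-modules---and of how the resulting $(G,\mu)$-equivariant module either remains simple or splits under the equivalence $k^\mu G\cong M_2(k)$. Carrying out this combinatorial-representation-theoretic analysis, together with the geometric identification of $E^G$ as a curve, is the technical heart of the proof.
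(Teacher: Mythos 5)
Your proposal takes a genuinely different route from the paper's proof, but it rests on machinery that is neither established in the companion paper you cite nor developed in your sketch. The paper does not invoke a general Clifford-theory correspondence between simple graded $A^{G,\mu}$-modules and $(G,\mu)$-equivariant simple graded $A$-modules. Instead it works concretely with two matrix embeddings: $A^{G,\mu}\hookrightarrow M_2(A)$ (via $kG_\mu\cong M_2(k)$) and, dually, $A\hookrightarrow M_2(A^{G,\mu})$. It then shows (Proposition \ref{prop: fatpoints}) that $M_p^2$, restricted from $M_2(A)$ to $A^{G,\mu}$, is a multiplicity-$2$ fat point whenever $p$ has at least three nonzero coordinates, and (Proposition \ref{prop: fatpointsotherwaygen}) that doubling a module over one algebra and restricting along the other embedding decomposes as a direct sum indexed by the $G$-orbit, with uniqueness of critical composition-series factors in high degree doing the bookkeeping. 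The parameterisation by $E^G$ then follows because Lemma \ref{lem: threegensannihilate} guarantees every $p\in E$ has three nonzero coordinates and Corollary \ref{cor: fatpointisoclasses}/Corollary \ref{cor: orbit4points} identifies isomorphism classes with $G$-orbits (each of size $4$).

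The serious gap in your sketch is the upper bound on point modules. The fact that there are \emph{at most} $20$ is not a consequence of any representation-theoretic count you have set up; the paper's argument (Theorem \ref{thm: finitepointscheme}) is quite specific. First one shows via the rank of the $6\times 4$ matrix of multilinearisations that any closed point $p\in\Gamma$ other than $e_0,\ldots,e_3$ has at least three nonzero coordinates. Then $\widetilde M_p^2$ is a multiplicity-$2$ fat point over $A$, and the Smith--Staniszkis classification (\cite{smith1993irreducible}, which assumes $|\sigma|=\infty$) forces it to agree with one of the four known such modules, which in turn by Proposition \ref{prop: fatpointsotherway} pins $p$ to one of the $16$ points in $\Gamma'$. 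Your proposal does not mention the matrix-rank step, nor the use of $|\sigma|=\infty$, and you explicitly defer the "delicate bookkeeping" that is precisely this argument. Likewise, the $20$ candidate points themselves are produced by an explicit computation (Lemma \ref{lem: ptschemecontains}) rather than falling out of an orbit-count formula; your Hilbert-series formula ("orbit size times dimension of projective representation times multiplicity") is asserted rather than derived and, as stated, would appear to give $4/(1-t)$ for a size-$4$ orbit of point modules rather than the $2/(1-t)$ you need. To make your route rigorous you would first need to prove the correspondence you invoke, then extract from it both the lower and upper bounds on the point count — at which point the concrete computation the paper does would almost certainly re-enter.
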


To prove this result, we use the embedding of $A^{G,\mu}$ inside $M_2(A)$ from \eqref{eq: odesskiiintro}. This allows us to consider direct sums of point modules over $A$ as modules over $A^{G,\mu}$ upon restriction. Moreover, one can perform essentially the same cocycle twist on $A^{G,\mu}$ to recover $A$, which enables us to consider $A$ as a subalgebra of $M_2(A^{G,\mu})$. It is this duality that is the key to the proof.

Since the twisted homogeneous coordinate ring $B$ controls much of the geometry related to the Sklyanin algebra $A$, it is natural to ask if there is an analogue for $A^{G,\mu}$. The answer to this question is positive: one can restrict the twisting operation down from $A$ to the factor ring $B$ to produce a cocycle twist $B^{G,\mu}$. We prove the following result which shows that the geometry related to $B^{G,\mu}$ contrasts with that associated to $B$.
\begin{theorem}[{Theorem \ref{thm: bgmunoptmodules}}]
$B^{G,\mu}$ has no point modules.
\end{theorem}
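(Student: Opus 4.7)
The plan is to reduce the question to the already-known point-module theory of $A^{G,\mu}$, then rule out the twenty candidates one at a time using the kernel of the natural surjection. The factor map $A \twoheadrightarrow B$ is the quotient by the two central degree-two elements $\Omega_1, \Omega_2$ that generate the defining ideal of $B$ inside $A$. Because the cocycle twist construction is functorial with respect to $G$-equivariant quotients, it descends to give a surjection $A^{G,\mu} \twoheadrightarrow B^{G,\mu}$. Thus any point module $M$ over $B^{G,\mu}$ is, by restriction of scalars, a point module over $A^{G,\mu}$, and by the previous theorem there are only twenty such modules; the entire question becomes: does the kernel of $A^{G,\mu} \twoheadrightarrow B^{G,\mu}$ annihilate any of these twenty?

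First I would identify this kernel explicitly. Under the cocycle twist, $\Omega_1$ and $\Omega_2$ (or appropriate $G$-invariant combinations of them, since the $G$-action permutes or scales the $\Omega_i$) give rise to two distinguished degree-two elements of $A^{G,\mu}$ whose ideal is the kernel of the surjection. Describing them in terms of the embedding $A^{G,\mu} \hookrightarrow M_2(A)$ from \eqref{eq: odesskiiintro} is straightforward: $\Omega_i$ acts diagonally on $A \otimes M_2(k)$, and the invariants yield an explicit $2 \times 2$ matrix over $A$ representing each twisted $\Omega_i$ inside $A^{G,\mu}$.

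Next I would use the dual picture established earlier in the paper, in which $A$ sits inside $M_2(A^{G,\mu})$ via essentially the same cocycle twist. Each of the twenty point modules over $A^{G,\mu}$ corresponds, under this duality, to a direct sum $M_p \oplus M_{p'}$ of two point modules over $A$ whose parameter points $p,p' \in E$ form a $G$-orbit of the appropriate type (the twenty points arise from the finitely many $G$-orbits on $E$ with nontrivial stabiliser, typically the $2$-torsion points). On each such module the central elements $\Omega_1, \Omega_2$ of $A$ act by known scalars, which translate, through the duality, into the action of the twisted $\Omega_i$ on the corresponding point module over $A^{G,\mu}$.

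The main obstacle is the last step: checking that for every one of the twenty $G$-orbits, at least one of the two twisted central elements acts nontrivially on the associated point module, forcing that module to fail to descend to $B^{G,\mu}$. This is a finite explicit computation, but it requires that the normal forms of the twenty point modules and the formulas for $\Omega_1,\Omega_2$ on point modules over $A$ (as recorded in \cite{smith1994center}) be matched precisely through the twist. Provided this matching is carried out correctly, a uniform nonvanishing result will follow from the fact that the $\Omega_i$ already distinguish the point modules over $A$ from the ``degenerate'' ones that become fat points, and the twenty surviving points in the twist lie precisely where this nonvanishing is forced; consequently no point module can survive in $B^{G,\mu}$.
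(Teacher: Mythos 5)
Your strategy is genuinely different from the paper's and is workable in principle, but as written it contains imprecisions that would need repair, and the crucial final step is left as an assertion.

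The paper's proof does not go through the twenty point modules at all. Instead, it supposes the point scheme $\Gamma''$ of $B^{G,\mu}$ is nonempty, invokes Rogalski--Zhang's canonical map theorem to obtain a map $B^{G,\mu} \to B(\Gamma'', \mathcal{M}, \phi)$ surjective in high degree whose kernel is (by Lemma~\ref{lem: annGinvariant}) a $G$-invariant ideal, and untwists this to get an ideal $I' \subset B$ of infinite codimension. Since $B$ is projectively simple for $|\sigma| = \infty$ and has no nontrivial $\sigma$-invariant subschemes of $E$, this is impossible. This is entirely independent of the explicit 20-point enumeration of Theorem~\ref{thm: finitepointscheme}.

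Your plan to reduce to the twenty point modules of $A^{G,\mu}$ and show that the kernel $(\Theta_1, \Theta_2)$ of $A^{G,\mu} \twoheadrightarrow B^{G,\mu}$ acts nontrivially on each is sound as a reduction: any $B^{G,\mu}$-point module is an $A^{G,\mu}$-point module upon restriction. However, your description of those twenty modules is off. The twenty points are $\{e_0,\dots,e_3\} \cup \Gamma'$, none of which lie on $E$, so they do not arise as ``$G$-orbits on $E$ with nontrivial stabiliser.'' Moreover, under the duality only the four modules $\widetilde{M}_{e_j}$ give direct sums of point modules over $A$ (Remark~\ref{rem: degenfatptno}); the sixteen modules $\widetilde{M}_p$ with $p \in \Gamma'$ instead give \emph{indecomposable} fat point modules $F(\omega_{[p]}^{\sigma})$ of multiplicity 2 (Proposition~\ref{prop: sklyaninfatpts}), not sums $M_p \oplus M_{p'}$.

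The real gap is that you flag the final nonvanishing check as a ``finite explicit computation'' but do not perform it. It can in fact be replaced by a conceptual argument that fits your framework. Since $x_i \mapsto \pm$diagonal or anti-diagonal matrices over $A^{G,\mu}$ under the re-twist embedding $A \hookrightarrow M_2(A^{G,\mu})$, one computes $\Omega_i \mapsto \operatorname{diag}(\Theta_i, \Theta_i)$. Hence $\Theta_i$ annihilates $\widetilde{M}_p$ if and only if $\Omega_i$ annihilates $\widetilde{M}_p^2$ as an $A$-module. For $p = e_j$, $\widetilde{M}_p^2 \cong M_{e_j}^2$ over $A$, and since $e_j \notin E$ the module $M_{e_j}$ is not a $B$-module, so some $\Omega_i$ acts nontrivially. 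For $p \in \Gamma'$, $\widetilde{M}_p^2 \cong F(\omega_{[p]}^\sigma)$, a 1-critical $A$-module of multiplicity 2; were this a $B$-module it would be a 1-critical $B$-module with Hilbert series $2/(1-t)$, contradicting $\text{qgr}(B) \simeq \text{coh}(\mathcal{O}_E)$, whose simple objects come from skyscraper sheaves and hence have multiplicity 1. So some $\Omega_i$ again acts nontrivially. This completes your route, but note it relies on Theorem~\ref{thm: finitepointscheme}, Propositions~\ref{prop: sklyaninfatpts} and~\ref{prop: fatpointsotherwaygen}, and the Smith--Staniszkis classification, whereas the paper's proof uses none of this machinery and is arguably cleaner and more robust (e.g., it would not require reworking if the twenty-point count were conditional on genericity).
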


One can in fact say more about the structure of the algebra $B^{G,\mu}$, for which we need to turn to Artin and Stafford's work in \cite{artin2000semiprime}. In simple terms, their work shows that connected graded noetherian semiprime algebras of GK dimension 2 can be described in terms of a \emph{twisted ring} over a curve. Such twisted rings are generalisations of twisted homogeneous coordinate rings. Our major result is the following theorem.
\begin{theorem}[{Theorem \ref{theorem: geomdescthcrtwist}}]
The ring $B^{G, \mu}$ can be described via Artin and Stafford's classification of noncommutative curves \cite{artin2000semiprime}. More precisely, it is the twisted ring of an order over the elliptic curve $E^G$.
\end{theorem}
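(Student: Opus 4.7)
The plan is to verify the hypotheses of Artin and Stafford's classification \cite{artin2000semiprime} for $B^{G,\mu}$ and then identify the order and the twisting data explicitly. The first step is to transfer the basic ring-theoretic properties required by the classification---connected graded, noetherian, semiprime, generated in degree one, of GK dimension $2$---from $B = B(E,\sigma,\calL)$ to its twist. Each of these holds for $B$, and each is preserved by the cocycle twist construction developed in the companion paper \cite{davies2014cocycle1}: the twist does not change Hilbert series (so GK dimension is preserved), and standard arguments show that noetherianness and the semiprime property pass through the twist as well.

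The next step is to exploit the embedding $B^{G,\mu} \hookrightarrow M_2(B)$ arising from the invariant-ring description $(B \otimes_k M_2(k))^G$, restricted from the analogous embedding at the level of Sklyanin algebras used for Proposition~\ref{prop: thmintro}. Geometrically, $M_2(B)$ corresponds to the sheaf of algebras $M_2(\calO_E)$ carrying its $\sigma$-twisted multiplication. The $G$-action on $B$ is induced by a $G$-action on $E$ which is free and commutes with $\sigma$, so $\sigma$ descends to an automorphism $\bar\sigma$ of the orbit space $E^G = E/G$ and the quotient map $\pi\colon E \to E^G$ is étale. I would then define the candidate sheaf of $\calO_{E^G}$-orders by
\[
\mathcal{A} \;:=\; \bigl(\pi_* M_2(\calO_E)\bigr)^G,
\]
together with graded bimodules $\mathcal{A}_n$ obtained by the same invariant-pushforward construction applied to $M_2(\calL_n)$, where $\calL_n = \calL \otimes \sigma^*\calL \otimes \cdots \otimes (\sigma^{n-1})^*\calL$ is the usual $\sigma$-twisted sheaf whose global sections recover $B_n$.

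The main identification to establish is the graded isomorphism
\[
B^{G,\mu}_n \;\cong\; H^0(E^G, \mathcal{A}_n),
\]
with the twisted multiplication on the right-hand side governed by $\bar\sigma$. The $G$-invariant description writes $B^{G,\mu}_n$ as a subspace of $H^0(E,\calL_n) \otimes_k M_2(k)$ carrying the diagonal $G$-action twisted by $\mu$; via étale descent along the free cover $\pi$, this subspace matches the space of global sections of $\mathcal{A}_n$ on $E^G$. Compatibility of the multiplications is forced by the fact that $\sigma$ and the $G$-action commute, so that $\bar\sigma$ encodes the twist on $E^G$ faithfully and the twisted ring structure on $\bigoplus_n H^0(E^G,\mathcal{A}_n)$ coincides with the one transported from $B^{G,\mu}$.

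The main obstacle I anticipate is verifying that $\mathcal{A}$ is genuinely an order in a central simple algebra over the function field $k(E^G)$---specifically, a torsion-free coherent sheaf of $\calO_{E^G}$-algebras of the correct generic rank---so that Artin and Stafford's machinery applies directly rather than only up to a birational modification. Because $\pi$ is étale, the local structure of $\mathcal{A}$ is controlled by $M_2(\calO_E)$ pulled back through $\pi$; étale-locally $\mathcal{A}$ becomes $M_2(\calO_{E^G})$, identifying it as an Azumaya algebra of rank $4$. Combined with the graded identification above, this realises $B^{G,\mu}$ as the twisted ring of the order $\mathcal{A}$ over the elliptic curve $E^G$, as required.
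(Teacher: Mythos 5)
Your approach is genuinely different from the paper's in one key respect, namely how you identify $\mathcal{E} := M_2(r_*\calO_E)^G$ as an order. The paper shows it is a sheaf of Dedekind prime rings by an indirect ring-theoretic argument: first proving the $G$-action on $M_2(k(E))$ is X-outer so each $\mathcal{E}(U)$ is prime (Lemma \ref{lem: Xouter}), then using that $\tilde\tau$ has no finite orbits on $E^G$ to rule out idempotent maximal ideals, and invoking \cite[Theorem 5.6.3]{mcconnell2001noncommutative}. You instead observe that the $G$-action on $E$ is free (Corollary \ref{cor: orbit4points}), so $r\colon E \to E^G$ is a $G$-Galois \'etale cover, and argue by Galois descent: since the $G$-action on $M_2(\calO_E) = \calO_E \otimes_k M_2(k)$ is by genuine algebra automorphisms (the action on the $M_2(k)$ factor is inner and the 2-cocycle disappears under conjugation, so the cocycle condition for a $G$-equivariant structure holds), $\mathcal{E}=(r_*M_2(\calO_E))^G$ pulls back to $M_2(\calO_E)$ along $r$, hence is an Azumaya $\calO_{E^G}$-algebra of rank 4. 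This is cleaner and in fact proves a strictly stronger structural statement than the paper's "maximal order." The same freeness observation also streamlines the smoothness of $E^G$, which the paper establishes via a Hurwitz argument in Lemma \ref{lem: ellcurve}.

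That said, there is a genuine gap in your identification of the twisted-ring structure. Artin and Stafford's $B(\mathcal{E},\mathcal{B}_1,\tilde\tau)$ is built from a single \emph{invertible} lattice $\mathcal{B}_1$ with $E(\mathcal{B}_1)=\mathcal{E}$ and $E'(\mathcal{B}_1)=\mathcal{E}^{\tilde\tau}$, with the higher pieces $\mathcal{B}_n=\mathcal{B}_1\cdot\mathcal{B}_1^{\tilde\tau}\cdots\mathcal{B}_1^{\tilde\tau^{n-1}}$ defined as tensor products. You \emph{define} $\mathcal{A}_n := (r_* M_2(\calL_n))^G$ directly, and the graded vector-space identification $B^{G,\mu}_n\cong H^0(E^G,\mathcal{A}_n)$ does follow essentially as you indicate (taking $G$-invariants commutes with global sections for a finite group of order invertible in $k$). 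But you have not verified that $\mathcal{A}_1$ is an invertible $\calO_{E^G}$-lattice, that $E(\mathcal{A}_1)=E'(\mathcal{A}_1)=\mathcal{E}$, or that your $\mathcal{A}_n$ equals the product $\mathcal{A}_1\cdot\mathcal{A}_1^{\tilde\tau}\cdots\mathcal{A}_1^{\tilde\tau^{n-1}}$. Without those checks you have identified $B^{G,\mu}$ with a ring of global sections of some sheaves, but not with a \emph{twisted ring} in the precise sense required by \cite{artin2000semiprime}. The paper supplies exactly these verifications: Lemma \ref{lem: twistedring} establishes invertibility and the equality of left and right orders (using maximality of $\mathcal{E}$ and projectivity of lattices over Dedekind prime rings), and the proof of the main theorem shows the inclusion $\mathcal{B}_n\subseteq M_2(r_*\calL_n)^G$ of products, with equality forced by the fact that $B^{G,\mu}$ is generated in degree one.
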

This result is noteworthy in that it is one of the first non-trivial examples of an algebra from the classification in \cite{artin2000semiprime} to appear `in the wild'. This result enables us to show that the family of fat point modules of multiplicity 2 over $A^{G, \mu}$ that are parameterised by the curve $E^G$ restrict to $B^{G, \mu}$. Furthermore, in Proposition \ref{prop: fatpointsincohE} we are able to determine the geometric origin of such modules.

We remark that several other authors have previously studied AS-regular algebras of dimension 4 that have only finitely many point modules like $A^{G,\mu}$. Examples include the work of Vancliff and co-authors in \cite{vancliff1998some} and \cite{shelton1999some}. Nevertheless, the algebra $A^{G,\mu}$ seems to be markedly different to such examples, even those that also have 20 point modules. 

Let us end the introduction by considering recent work of Pym in \cite{pym2015quantum}. In that paper all families of Calabi-Yau deformations of a polynomial ring on four generators are described, with the generic element of each family given, the Sklyanin algebra being one such element. The algebra $A^{G,\mu}$ does not appear in any of these families, but should instead be thought of as a deformation of a particular super-commutative polynomial ring, where some of the generators anti-commute. It is interesting to pose the question of whether there exist analogues of Pym's result in this context, and whether the algebras described are cocycle twists of algebras in his original deformation classification. 

In a paper appearing on the arXiv in January 2015, Chirvasitu and Smith \cite{chirvasitu2015exotic} prove some of the same results. As noted in their paper, our results had already appeared on the internet in July 2014 \cite{davies2014thesis}.

\subsection{Contents}
Let us now give a description of the contents of this paper. In \S\ref{sec: background} we give the definition of 4-dimensional Sklyanin algebras and discuss their geometric properties in more detail. We also recall the twist constructions considered in \cite{davies2014cocycle1}. We then proceed to study the behaviour of modules under a cocycle twist in \S\ref{sec: modules}. 

Next we study a particular twist of a 4-dimensional Sklyanin algebra in \S\ref{sec: twist4dimsklyanin}. There are many similar twists to that which we study; we show that all such twists belong to the same family up to a change of parameters. After this we apply the machinery and results of \S\ref{sec: modules} to one such algebra.

In \S\ref{sec: thcr} we study a cocycle twist of a twisted homogeneous coordinate ring which is a factor ring of the 4-dimensional Sklyanin algebra. This twist will be described in terms of Artin and Stafford's classification of noncommutative curves in Theorem \ref{theorem: geomdescthcrtwist}.

\subsection{Notation}\label{subsec: notation}
Throughout, $k$ will denote an algebraically closed field and $G$ a finite abelian group. Furthermore, we assume that $\text{char}(k) \nmid |G|$.

The algebras we will consider in this paper will be associative, connected graded $k$-algebras. A $k$-algebra $A$ is called \emph{connected graded (c.g.)} if $A= \bigoplus_{n \in \N} A_n$ with $ab \in A_{n+m}$ for all $a \in A_n$ and $b \in A_m$, where additionally $A_0=k$ and $\text{dim}_k A_n < \infty$ for all $n \in \N$. The \emph{Hilbert series} of a c.g.\ algebra is the power series $H_A(t)=\sum_{n \in \N} (\text{dim}_k A_n)t^n$.

When describing relations in such an algebra, we will use shorthand notation for two kinds of commutator. For $x,y \in A$, define
\begin{equation*}
[x,y]:=xy-yx \text{ and } [x,y]_+ := xy+yx.
\end{equation*}

We will work with right modules unless otherwise stated. By $\text{Mod}(A)$ ($\text{mod}(A)$) we shall denote the category of (finitely generated) $A$-modules and by $\text{GrMod}(A)$ ($\text{grmod}(A)$) the category of (finitely generated) $\N$-graded $A$-modules. 

We use the letter $e$ to denote the identity element of a group $G$. The action of an element $g\in G$ on $a \in A$ by a $k$-algebra automorphism will be denoted by the superscript $a^g$. The group of group automorphisms of $G$ will be denoted by $\text{Aut}(G)$.

Although not needed explicitly, we give a definition of AS-regular algebras due to their fundamental importance in our work.
\begin{defn}[{\cite[\S 0]{artin1987graded}}]\label{defn: asregular}
Let $d \in \mathbb{N}$. A connected graded $k$-algebra $A$ is said to be \emph{AS-regular of dimension
$d$} if the following conditions are satisfied:
\begin{itemize}
 \item[(i)] $A$ has finite GK dimension;
 \item[(ii)] $\text{gldim }A=d$;
 \item[(iii)] $A$ is AS-Gorenstein. That is, when $k$ and $A$ are considered as $\N$-graded $A$-modules one has \begin{equation*}
\text{Ext}^i_A(k,A)= \left\{ \begin{array}{cl} k & \text{if }i=d, \\ 0 & \text{if }i \neq d. \end{array}\right.
\end{equation*}
\end{itemize}
\end{defn}

Further notation will be introduced as and when needed.

\section*{Acknowledgements}
The contents of this paper form part of the author's PhD thesis \cite{davies2014thesis}, completed under the supervision of Professor Toby Stafford. The author wishes to express their gratitude to Professor Stafford for his support and guidance throughout both the preparation of this paper and their PhD, as well as to the EPSRC for funding their study.

\section{Background}\label{sec: background}

\subsection{4-dimensional Sklyanin algebras}\label{subsec: 4dimsklyanin}

The algebras that will form the main focus of this paper are the \emph{4-dimensional Sklyanin algebras}. Such algebras were extensively studied in \cite{smith1992regularity,smith1993irreducible,levasseur1993modules}, not to mention \cite{tate1996homological}, where they are viewed as a special case of a more general construction. 

We will denote a 4-dimensional Sklyanin algebra by $A:=A(\alpha,\beta,\gamma)$, where $\alpha, \beta, \gamma \in k$ are some parameters. They are connected graded $k$-algebras---where $k$ is some field with $\text{char}(k)\neq 2$---generated by degree 1 elements $x_0,x_1,x_2$ and $x_3$ subject to the six quadratic relations given in \cite[Equation 0.2.2]{smith1992regularity}:
\begin{equation}\label{eq: 4sklyaninrelnsintro}
\begin{array}{ll}
[x_0,x_1]-\alpha[x_2,x_3]_+, & [x_0,x_1]_+ -[x_2,x_3], \\ \relax
[x_0,x_2]-\beta[x_3,x_1]_+, & [x_0,x_2]_+ -[x_3,x_1], \\ \relax
[x_0,x_3]-\gamma[x_1,x_2]_+, & [x_0,x_3]_+ -[x_1,x_2]. 
\end{array}
\end{equation}
The scalars must satisfy the conditions
\begin{equation}\label{eq: 4sklyanincoeffcond}
\alpha + \beta + \gamma + \alpha \beta \gamma = 0 \;\text{ and }\; \{\alpha,\beta, \gamma\} \cap \{0,\pm 1\}=\emptyset.
\end{equation}
The latter condition is not necessary, but we assume it to avoid some degenerate algebras which are either isomorphic to iterated Ore extensions or are not domains (see \cite[\S 1]{smith1992regularity}).

The construction of this algebra can also be phrased in terms of a smooth elliptic curve $E$ and the translation $\sigma$ associated to a point on the curve. By using this method one can generalise the definition of $A$ to define $n$-dimensional Sklyanin algebras for all integers $n \geq 3$, as in \cite{tate1996homological}. 

There exist two central elements $\Omega_1,\Omega_2 \in A_2$ by \cite[Corollary 3.9]{smith1992regularity}. In fact, when $\sigma$ has infinite order one has $Z(A)=k[\Omega_1,\Omega_2]$ by \cite[Proposition 6.12]{levasseur1993modules}. The factor ring $A/(\Omega_1,\Omega_2)$ is isomorphic to the twisted homogeneous coordinate ring $B:=B(E,\calL,\sigma)$, where $\calL$ is a very ample invertible sheaf that provides an embedding $E \hookrightarrow\proj{k}{3}$.

In order to link the geometry described above with algebraic properties we need the following definition. 
\begin{defn}\label{def: noncommspace}
Let $\text{fdmod}(A)$ denote the full subcategory of $\text{grmod}(A)$ consisting of finite-dimensional modules. Define the \emph{category of tails of $A$} to be the quotient category
\begin{equation*}
\text{qgr}(A):= \text{grmod}(A)/\text{fdmod}(A).
\end{equation*}
The \emph{tail} of a module $M \in \text{grmod}(A)$ refers to the image of $M$ under the canonical functor $\pi: \text{grmod}(A)\rightarrow \text{qgr}(A)$.
\end{defn}

The irreducible objects in $\text{qgr}(A)$ are the tails of modules which are 1-critical with respect to GK dimension. Such an object is the tail of a module $M \in \text{grmod}(A)$ for which $\text{GKdim}(M) = 1$ and all proper graded submodules are finite-dimensional. 

By \cite[Proposition 7.1]{smith1994four} all irreducible objects in $\text{qgr}(A)$ arise as tails of the following class of modules. 
\begin{defn}[{\cite[pg. 8]{artin1990geometry}}]\label{def: fatpointmodule}
A \emph{fat point module} of $A$ is a module that satisfies the following conditions:
\begin{itemize}
 \item[(i)] it has Hilbert series $e/(1-t)$ for some integer $e \geq 1$;
 \item[(ii)] it is generated in degree 0;
 \item[(iii)] any non-zero $\N$-graded submodule has finite codimension.
\end{itemize}
The integer $e$ is called the \emph{multiplicity} of the module. Fat point modules of multiplicity 1 are called \emph{point modules}.
\end{defn}

The point modules over $A$ are parameterised by the elliptic curve $E$ and four extra points \cite[Proposition 2.4]{smith1992regularity}. Those on the curve $E$ are also point modules over $B$, and in fact these constitute all of its 1-critical modules (a consequence of the Noncommutative Serre's Theorem \cite[Theorem 1.3]{artin1990twisted}). When $\sigma$ has infinite order, which is the case of interest for us, $A$ has four fat point modules of each multiplicity greater than 1 up to isomorphism \cite[Main Theorem]{smith1993irreducible}. The fat point modules of higher multiplicity are linked to the 2-torsion points on $E$.

\subsection{Cocycle Twists}\label{subsec: cocycletwists}
We now briefly recall the two constructions of cocycle twists from \cite[\S 3]{davies2014cocycle1}. A result of Bazlov and Berenstein \cite[Lemma 3.6]{bazlov2012cocycle} shows that the two constructions produce isomorphic algebras. Both constructions require a choice of isomorphism $G \cong G^{\vee}$, where $g \mapsto \chi_g$. We fix such an isomorphism throughout \S\ref{subsec: cocycletwists}.

The first construction uses the fact that an action of $G$ by $\N$-graded automorphisms on $A$ induces an $(\N,G)$-bigrading. Supposing that $G$ acts on $A$ by such an automorphism, we can define a $G$-grading by setting $A_g = A^{\chi_{g^{-1}}}$ for all $g \in G$. This is the isotypic component of $A$ corresponding to the character $\chi_{g^{-1}}$, thus $a \in A_g$ if and only $a^h = \chi_{g^{-1}}(h)a$ for all $h \in G$. One can then use a 2-cocycle $\mu$ to twist the multiplication in $A$; this is a function $\mu: G \times G
\rightarrow k^{\times}$ satisfying the following relations for all $g,h,l \in G$:
\begin{equation*}
\mu(g,h)\mu(gh,l)=\mu(g,hl)\mu(h,l), \;\;\; \mu(e,g)=\mu(g,e)=1.
\end{equation*}

A twisted multiplication $\ast_{\mu}$ can be defined for homogeneous elements $a \in A_g$ and $b \in A_h$ by $a \ast_{\mu} b:= \mu(g,h)ab$, with juxtaposition denoting the original multiplication in $A$. We denote the algebra obtained by using this twisted multiplication by $A^{G,\mu}$. 

The second construction generalises an example of Odesski{\u\i} in \cite{odesskii2002elliptic}. Let $\mu$ be a 2-cocycle as above, and consider the twisted group algebra $kG_{\mu}$. Define an action of $G$ on $kG_{\mu}$ by $g^{h}:=\chi_g(h)g$ for all $g, h \in G$ and extending $k$-linearly. Given this action we can consider the action of $G$ on $AG_{\mu}$ defined by $(ag)^h:=a^h g^h= \chi_{g}(h)a^h g$ for all $a \in A$ and $g, h \in G$. The algebra in which we are interested is the invariant ring under this action, $(AG_{\mu})^G$.

\section{Modules under twisting}\label{sec: modules}
In this section we explore the interplay between modules over $A$ and those over a cocycle twist $A^{G,\mu}$. Here $G$ is fixed to be the Klein four-group and $\mu$ is a 2-cocycle of $G$ defined in \eqref{eq: 2cocycle}. More precisely we prove that, for algebras like the Sklyanin algebra $A$, the fat point modules over the algebra and its twist are closely related; one can construct fat point modules of multiplicity 2 over $A^{G,\mu}$ by taking direct sums of point modules over $A$.

Let $M$ be an $\N$-graded module over $A$ and $g \in G$. We can define a new $A$-module $M^g$ via the multiplication $m \ast_g a := ma^g$ for all $m \in M$ and $a \in A$. The underlying $\N$-graded vector space structure of $M$ remains unchanged, from which it follows that point modules are preserved under this process. 

We will consider such twisted modules in the first result of this section, but beforehand let us introduce some further notation. Let $I$ be a right ideal in $A$. Then for all $g \in G$ define $g(I) := \{a^{g} : \; a \in I\}$. 
\begin{lemma}\label{lem: annGinvariant}
Let $M$ be a $\N$-graded module over $A$, on which a finite group $G$ acts by $\N$-graded algebra automorphisms. Then
$\text{Ann}_A(M^g)=g^{-1}(\text{Ann}_A(M))$, and the ideal $\bigcap_{g \in G} \text{Ann}_A(M^g)$ is $G$-invariant. 

In particular, the ideal consisting of elements which annihilate all point modules
over $A$ is $G$-invariant.
\end{lemma}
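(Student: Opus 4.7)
The proof should be a short unwinding of definitions, so I would aim for clarity over cleverness.

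For the identity $\text{Ann}_A(M^g) = g^{-1}(\text{Ann}_A(M))$, my plan is simply to chase the definitions. An element $a \in A$ lies in $\text{Ann}_A(M^g)$ precisely when $m \ast_g a = 0$ for every $m \in M$, which by definition of the twisted action means $m a^g = 0$ for every $m$, i.e.\ $a^g \in \text{Ann}_A(M)$. Writing $a = (a^g)^{g^{-1}}$ shows this is equivalent to $a \in g^{-1}(\text{Ann}_A(M))$. Both inclusions are immediate from this calculation.

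For $G$-invariance of $J := \bigcap_{g \in G} \text{Ann}_A(M^g)$, I would first rewrite it using the previous identity as $J = \bigcap_{g \in G} g^{-1}(\text{Ann}_A(M))$, and then observe that for any $h \in G$ one has $h(g^{-1}(I)) = (h^{-1} g)^{-1}(I)$ for an arbitrary ideal $I$ (this is just the relation $(b^{g^{-1}})^h = b^{g^{-1} h}$). Hence
\begin{equation*}
h(J) = \bigcap_{g \in G} (h^{-1}g)^{-1}(\text{Ann}_A(M)),
\end{equation*}
and as $g$ ranges over $G$ so does $h^{-1}g$, so the indexed family is just a reordering of the original one and $h(J) = J$.

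For the final sentence, the key observation is that the defining conditions of a point module (Hilbert series $1/(1-t)$, generation in degree~$0$, and the submodule-theoretic critical condition) depend only on the underlying $\N$-graded vector space of the module together with its lattice of graded submodules, none of which are altered by passage from $M$ to $M^g$. Thus $P \mapsto P^g$ permutes the set of isomorphism classes of point modules, so if $J$ is the intersection of annihilators of all point modules then $J = \bigcap_{P, g} \text{Ann}_A(P^g)$, and the same argument as above shows $J$ is $G$-invariant.

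The proof contains no real obstacle; the only point to be careful about is bookkeeping of the left/right conventions in the identity $h(g^{-1}(I)) = (h^{-1}g)^{-1}(I)$, since a sign error there would make the rewriting of $h(J)$ fail to close up.
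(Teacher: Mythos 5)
Your proof is correct and follows essentially the same route as the paper: unwind the definition of the twisted module action to get the annihilator identity, deduce the $G$-invariance of the intersection by reindexing, and note that twisting preserves the defining properties of point modules. You are slightly more explicit than the paper at the two places it is terse---the reindexing identity $h(g^{-1}(I)) = (h^{-1}g)^{-1}(I)$ and the verification that $M \mapsto M^g$ preserves point modules---but these are exactly the steps the paper is implicitly relying on.
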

\begin{proof}
Let $a \in \text{Ann}_A(M^g)$. Then for all $m \in M$, $ma^g=0$, hence $a^g \in \text{Ann}_A(M)$. This implies that $a
\in g^{-1}(\text{Ann}_A(M))$. The other inclusion is proved by the reverse argument. By the first part of the result 
\begin{equation}\label{eq: annptmod}
\bigcap_{g \in G} \text{Ann}_A(M^g) = \bigcap_{g \in G} g^{-1}(\text{Ann}_A(M)),
\end{equation}
hence it is preserved under the action of $G$. 

As discussed prior to the lemma, if $M$ is a point module over $A$ then $M^g$ is also a point module for all $g \in G$. The result is now clear.
\end{proof}

Let us now fix the hypotheses that we will work under for the remainder of this section.
\begin{hyp}\label{hyp: genhypforfatpts}
Let $k$ be an algebraically closed with $\text{char}(k)\neq 2$. Assume that $A$ is a $k$-algebra that satisfies the hypotheses of \cite[Theorem 1.4]{shelton1999embedding}, that is:
\begin{itemize}
 \item[(i)] it is generated in degree 1 and has Hilbert series $1/(1-t)^4$;
 \item[(ii)] it is noetherian and Auslander regular of global dimension 4;
 \item[(iii)] it is Cohen-Macaulay.
\end{itemize}
We denote the degree 1 generators of $A$ by $x_0, x_1 ,x_2$ and $x_3$. 

Let $G=\langle g_1,g_2 \rangle$ be the Klein four-group, which acts on $A$ by $\N$-graded algebra automorphisms. Furthermore, assume that the action of $G$ on $A_1$ affords the regular representation, inducing the following $G$-grading on generators:
\begin{equation}\label{eq: gengrading}
x_0 \in A_{e},\; x_1 \in A_{g_{1}},\; x_2 \in A_{g_{2}},\; x_3 \in A_{g_{1}g_{2}}.
\end{equation}
Finally, assume that $\mu$ is the 2-cocycle of $G$ defined by
\begin{equation}\label{eq: 2cocycle}
\mu(g_1^p g_2^q, g_1^r g_2^s) = (-1)^{ps}
\end{equation}
for all $p, q, r, s \in \{ 0 , 1\}$. 
\end{hyp}

Note that $kG_{\mu}\cong M_2(k)$ under these hypotheses, via the map sending
\begin{align}\label{eq: diagmat}
e \mapsto \begin{pmatrix} 1 & 0 \\ 0 & 1 \end{pmatrix},\; g_1 \mapsto \begin{pmatrix} 1 & 0 \\ 0 &
-1 \end{pmatrix},\; g_2 \mapsto \begin{pmatrix} 0 & 1 \\ 1 & 0 \end{pmatrix}, \;\;g_1g_2 \mapsto \begin{pmatrix} 0 & -1 \\ 1 & 0 \end{pmatrix}.
\end{align}

Since $A^{G,\mu} \subset (AG_{\mu})^G$, the cocycle twist embeds inside $M_2(A)$. This embedding is defined in the following lemma, whose proof follows from \eqref{eq: diagmat} and \cite[Lemma 3.6]{bazlov2012cocycle}.
\begin{lemma}\label{lem: matrixgens}
The degree 1 generators of $A^{G,\mu}$, denoted by $v_0, v_1, v_2$ and $v_3$, are given by the following matrices in $M_2(A)$:
\begin{align}\label{eq: matrixembedding}
v_0 = \begin{pmatrix} x_0 & 0 \\ 0 & x_0 \end{pmatrix},\; v_1 = \begin{pmatrix} x_1 & 0 \\ 0 &
-x_1 \end{pmatrix},\; v_2 = \begin{pmatrix} 0 & x_2 \\ x_2 & 0 \end{pmatrix}, \;\;
v_3 = \begin{pmatrix} 0 & -x_3 \\ x_3 & 0 \end{pmatrix}.
\end{align}
\end{lemma}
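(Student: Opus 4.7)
The plan is to use the second cocycle twist construction from \S\ref{subsec: cocycletwists}, together with the isomorphism $kG_{\mu}\cong M_2(k)$ from \eqref{eq: diagmat} and Bazlov--Berenstein's identification of the two constructions.

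First I would identify the $G$-invariant elements of $AG_{\mu}$. Since the action is given by $(ag)^{h}=\chi_{g}(h)a^{h}g$, an element $ag$ with $a$ homogeneous under the $(\N,G)$-bigrading of $A$ satisfies $(ag)^{h}=ag$ for all $h\in G$ precisely when $a^{h}=\chi_{g}(h)^{-1}a=\chi_{g^{-1}}(h)a$, i.e.\ when $a\in A_{g}$ in the $G$-grading from \S\ref{subsec: cocycletwists}. Under the grading in \eqref{eq: gengrading} this means that the generators $x_{0},x_{1},x_{2},x_{3}$ give rise to $G$-invariant elements $x_{0}e$, $x_{1}g_{1}$, $x_{2}g_{2}$ and $x_{3}g_{1}g_{2}$ of $AG_{\mu}$.

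Next I would invoke \cite[Lemma 3.6]{bazlov2012cocycle}, which provides an isomorphism between $A^{G,\mu}$ and the invariant subring $(AG_{\mu})^{G}$ sending a homogeneous element $a\in A_{g}$ to $ag$. Under this isomorphism the degree 1 generators $v_{i}$ of $A^{G,\mu}$ correspond exactly to the four invariant elements identified above. Finally, applying the algebra isomorphism $kG_{\mu}\cong M_{2}(k)$ of \eqref{eq: diagmat} componentwise (which extends to an isomorphism $AG_{\mu}\cong A\otimes_{k}kG_{\mu}\cong M_{2}(A)$ of graded algebras, since $A$ commutes with $kG_{\mu}$ inside $AG_{\mu}$ as a vector space factor), each $x_{i}g_{i}$ is sent to the product of $x_{i}\cdot I$ with the matrix image of $g_{i}$. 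This yields precisely the four matrices in \eqref{eq: matrixembedding}.

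The calculation is essentially bookkeeping: the main thing to be careful about is matching the convention that $A_{g}$ denotes the isotypic component for $\chi_{g^{-1}}$ (so that the invariance condition picks out exactly $a\in A_{g}$), and checking that the matrix images of $e,g_{1},g_{2},g_{1}g_{2}$ in \eqref{eq: diagmat} commute past the scalar $x_{i}\in A$ in the sense that $x_{i}g_{i}$ becomes the scalar matrix $x_{i}I$ multiplied by the matrix for $g_{i}$. No genuine obstacle arises, since the Bazlov--Berenstein identification does all of the nontrivial work.
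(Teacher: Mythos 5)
Your proposal is correct and follows essentially the same route as the paper, which simply notes that the lemma follows from the identification $kG_\mu \cong M_2(k)$ in \eqref{eq: diagmat} together with \cite[Lemma 3.6]{bazlov2012cocycle}. You have unpacked exactly the bookkeeping the paper leaves implicit, and the final matrix computations match \eqref{eq: matrixembedding}.
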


Let us now introduce some notation for point modules over $A$. Under Hypotheses \ref{hyp: genhypforfatpts} one may apply \cite[Theorem 1.4]{shelton1999embedding} to conclude that point modules over $A$ are parameterised by its point scheme $\Gamma \subseteq \proj{k}{3}$, with the shifting operation on such modules controlled by a scheme automorphism $\sigma$. We will denote the point module corresponding to a point $p=(p_0,p_1,p_2,p_3) \in \Gamma$ by $M_p:= \bigoplus_{j \in \N} km_j^p$. For the action of the generators of $A$ on $M_p$ we will use the notation $m_j^p \cdot x_i:= \alpha_{j,i}^p m_{j+1}^p$, where $\alpha_{j,i}^p \in k$. By standard point module theory one has $M_p[j]_{\geq 0} \cong M_{\sigma^j(p)}$ for all $j \in \N$. 

For a point $p \in \Gamma$ it is clear that $M_p^2$ is an $\N$-graded right $M_2(A)$-module with Hilbert series $2/(1-t)$. In some cases such a module is actually a fat point module over the cocycle twist, as the following result shows.
\begin{proposition}\label{prop: fatpoints} 
Suppose that at least three coordinates of $p \in \Gamma$ are non-zero. Then $M_p^2$ is a fat point module over $A^{G,\mu}$ of multiplicity 2.
\end{proposition}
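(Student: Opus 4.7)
The plan is to verify the three defining conditions of a fat point module of multiplicity $2$: Hilbert series $2/(1-t)$, generation in degree $0$, and finite codimension of every non-zero graded submodule. The Hilbert series condition is immediate from $M_p$ having Hilbert series $1/(1-t)$. For generation, observe that $(A^{G,\mu})_0 = k$ forces $(m_0^p, 0)$ and $(0, m_0^p)$ to appear among the generators. Using the matrix description of $v_0, \ldots, v_3$ from Lemma \ref{lem: matrixgens}, I would note that $v_0, v_1$ act diagonally on the two components (with signs) while $v_2, v_3$ interchange them. Computing the images of $(m_j^p, 0)$ and $(0, m_j^p)$ under each $v_i$ produces, for both $(m_{j+1}^p, 0)$ and $(0, m_{j+1}^p)$, four scalar multiples involving the $\alpha_{j,i}^p$. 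Since $M_p$ is $1$-critical (otherwise $m_j^p \cdot A_1 = 0$ for some $j$ would make $km_j^p$ a finite-dimensional submodule contradicting $1$-criticality), at least one $\alpha_{j,i}^p$ is non-zero, so both degree-$(j+1)$ basis vectors lie in the submodule generated by the two chosen degree-$0$ vectors. Induction on $j$ closes this step.

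For criticality, let $N \subseteq M_p^2$ be a non-zero graded submodule with smallest non-zero degree $j$; by the generation argument it suffices to find $j_0 \geq j$ such that $N_{j_0}$ contains both $(m_{j_0}^p, 0)$ and $(0, m_{j_0}^p)$, since then $N \supseteq (M_p^2)_{\geq j_0}$ and $N$ has finite codimension. Picking a non-zero $w = (a_0, b_0) \cdot m_j^p \in N_j$, I would compute $w \cdot v_i = \alpha_{j,i}^p \cdot T_i(a_0, b_0) \cdot m_{j+1}^p$, where $T_0, T_1, T_2, T_3$ on $k^2$ are the identity, $(a,b) \mapsto (a,-b)$, $(a,b) \mapsto (b,a)$ and $(a,b) \mapsto (b,-a)$ respectively. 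A case analysis on $(a_0, b_0)$ shows that the orbit $\{T_i(a_0, b_0)\}$ always spans $k^2$, and the hypothesis that $p$ has at least three non-zero coordinates (equivalently, at least one of $\{p_0, p_1\}$ and at least one of $\{p_2, p_3\}$ is non-zero) provides enough non-zero $\alpha_{0, i}^p$'s to realise this span in $(M_p^2)_1$ when $j = 0$, handling in particular the degenerate inputs $(a_0, b_0) \in \{(1,0),(0,1)\}$.

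The main obstacle I anticipate is extending this criticality argument to submodules whose minimal non-zero degree $j$ is positive, since the hypothesis directly constrains only $p = \sigma^0(p)$, not $\sigma^j(p)$. Because $\sigma$ has infinite order on $E$ and the locus of points in $\Gamma$ with at most two non-zero coordinates is a proper closed subset, only finitely many iterates $\sigma^j(p)$ can be problematic; these exceptional degrees can be bypassed by passing from the degree-$1$ action to higher-degree products in $A^{G,\mu}$, after which the generic span analysis applies and yields the desired $j_0$.
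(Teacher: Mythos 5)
Your argument for generation in degree $0$ and for criticality of cyclic submodules supported in degree $0$ matches the paper's proof closely: the paper also verifies generation by induction using a single non-vanishing $\alpha_{j,i}^p$ at each step, and then handles criticality by splitting into the cases where the generator is supported on the diagonal $(m_j^p,\lambda m_j^p)$ with $\lambda \neq 0$, or on a single coordinate $(m_j^p,0)$ resp.\ $(0,m_j^p)$, exactly as in your $T_i$-transformation analysis.

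You are right to be worried about the degree-$j$ case for $j>0$. The paper's proof in fact glosses over this: it writes ``By assumption $\alpha_{j,i}^p \neq 0$ for at least three of the generators'' for arbitrary $j$, but the stated hypothesis only constrains $\alpha_{0,i}^p$, i.e.\ the coordinates of $p$, not those of $\sigma^j(p)$. Your proposed repair, however, does not hold up. First, Proposition \ref{prop: fatpoints} is stated under the abstract Hypotheses \ref{hyp: genhypforfatpts}, where there is no elliptic curve $E$ and no assumption that $\sigma$ has infinite order, so you cannot invoke $|\sigma|=\infty$ or the geometry of $E$ in the proof. Second, even granting an infinite-order $\sigma$, the fact that the bad locus (points of $\Gamma$ with at most two non-zero coordinates) is a proper closed subset does not imply that the orbit $\{\sigma^j(p)\}$ meets it only finitely often; that requires additional information. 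Third, ``passing to higher-degree products'' is not a proof: a cyclic submodule generated by a diagonal vector $(m_j^p,\lambda m_j^p)$ can remain $1$-dimensional in every degree $j'\geq j$ if $\sigma^{j'}(p)$ persistently lands in the bad locus, and you would need to rule this out.

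The honest resolution is to read the hypothesis of the proposition as implicitly strengthened to ``$\sigma^j(p)$ has at least three non-zero coordinates for every $j \geq 0$,'' which is what the paper actually uses and is what is verified in every application: for $p\in E$ over the Sklyanin algebra, Lemma \ref{lem: threegensannihilate} shows that every point of $E$ has this property and $\sigma(E)=E$; and in the proof of Theorem \ref{thm: finitepointscheme} for $A^{G,\mu}$, the automorphism $\phi$ has order $2$ (Lemma \ref{lem: phiaut}), fixes the four exceptional points $e_j$, and the preliminary rank computation there shows every point of $\Gamma\setminus\{e_0,\dots,e_3\}$ has at least three non-zero coordinates, so the orbit of $p$ under $\phi$ stays in the good locus. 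Your instinct to flag the issue was correct; your suggested bypass is not.
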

\begin{proof}
Let $N$ denote the submodule generated by $(M_p^2)_0$. We prove by induction that $(M_p^2)_j \subseteq N$ for all $j \geq 0$, where the base case $j=0$ is clear. Suppose that $(M_p^2)_j \subseteq N$ for some $j \geq 0$. Since $M_p[j]_{\geq 0} \cong M_{\sigma^j(p)}$, the action of the generators of $A$ on $m_j^p$ is given by the coordinates of $\sigma^j(p)$. At least
one generator does not annihilate $m_j^p$, in which case letting the corresponding generator of $A^{G,\mu}$ act on $(m_j^p,0)$ and $(0,m_j^p)$ shows that $(M_p^2)_{j+1} \subseteq N$. By induction, $N=M_p^2$ and so $M_p^2$ is generated in degree 0.

To prove that $M_p^2$ is 1-critical it is sufficient to show that any cyclic $\N$-graded submodule has finite codimension. Consider the submodule generated by an element $(m_j^p,\lambda m_j^p) \in M_p^2$, where $\lambda \in k^{\times}$. By assumption $\alpha_{j,i}^p \neq 0$ for at least three of the generators. This means that either $\alpha_{j,0}^p,\alpha_{j,1}^p \neq 0$ or $\alpha_{j,2}^p,\alpha_{j,3}^p \neq 0$. In the former case $x_0$ and $x_1$ do
not annihilate $m_j^p$ and thus
\begin{align*}
(m_j^p,\lambda m_j^p) \cdot \left(v_0+ \frac{\alpha_{j,0}^p}{\alpha_{j,1}^p} v_1\right) &=
(2\alpha_{j,0}^p m_{j+1}^p,0), \\
(m_j^p, \lambda m_j^p) \cdot \left(v_0 - \frac{\alpha_{j,0}^p}{\alpha_{j,1}^p} v_1 \right) &=
(0,2 \lambda \alpha_{j,0}^p m_{j+1}^p).
\end{align*}
On the other hand, if $\alpha_{j,2},\alpha_{j,3} \neq 0$ then $x_2$ and $x_3$ do not annihilate $m_j^p$ and so
\begin{align*}
(m_j^p,\lambda m_j^p) \cdot \left(v_2 + \frac{\alpha_{j,2}^p}{\alpha_{j,3}^p} v_3 \right) &=
(2 \lambda \alpha_{j,3}^p m_{j+1}^p,0), \\
(m_j^p, \lambda m_j^p) \cdot \left(v_2 - \frac{\alpha_{j,2}^p}{\alpha_{j,3}^p} v_3 \right) &=
(0,2 \alpha_{j,2}^p m_{j+1}^p). 
\end{align*}

Thus $(m_{j+1}^p,0)$ and $(0,m_{j+1}^p)$ belong to the submodule generated by $(m_j^p, \lambda m_j^p)$, and therefore it has finite codimension in $M_p^2$ by the argument used in the induction earlier in the proof.

It remains to show that the submodules generated by either $(m_j^p,0)$ or $(0,m_j^p)$ have finite codimension. We give the argument for $(m_j^p,0)$, the argument for $(0,m_j^p)$ being similar. By assumption either $\alpha_{j,0}^p,\alpha_{j,2}^p \neq 0$ or $\alpha_{j,1}^p,\alpha_{j,3}^p \neq 0$. If $\alpha_{j,0}^p,\alpha_{j,2}^p \neq 0$ then one has
\begin{equation*}
(m_j^p,0)\cdot v_0 = (\alpha_{j,0}^p m_{j+1}^p,0)\; \text{ and }\; (m_j^p,0)\cdot v_2 = (0,\alpha_{j,2}^p m_{j+1}^p),
\end{equation*}
while if $\alpha_{j,1}^p,\alpha_{j,3}^p \neq 0$ one has
\begin{equation*}
(m_j^p,0)\cdot v_1 = (\alpha_{j,1}^p m_{j+1}^p,0)\; \text{ and }\;(m_j^p,0)\cdot v_3 = (0,-\alpha_{j,3}^p m_{j+1}^p).
\end{equation*} 
Once again, this is sufficient to show the submodule generated by $(m_j^p,0)$ has finite codimension.
\end{proof}

The action of $G$ on point modules allows us to define the following action of $G$ on $\Gamma$: for $p \in \Gamma$ and $g \in G$ define $p^g := q$ where $q \in \Gamma$ satisfies $(M_p)^g \cong M_q$.
\begin{lemma}\label{lem: actiononpoints}
Assume Hypotheses \ref{hyp: genhypforfatpts}. Then the group $G$ acts on a point $p=(p_0,p_1,p_2,p_3) \in \Gamma$ in the following manner:
\begin{gather}
\begin{aligned}\label{eq: Gactonpoints}
&p^e =p,\;\; p^{g_{1}}=(p_0,p_1,-p_2,-p_3),\;\; p^{g_{2}}=(p_0,-p_1,p_2,-p_3),\\ &p^{g_{1}g_{2}} =(p_0,-p_1,-p_2,p_3).
\end{aligned}
\end{gather}
In particular, this action preserves the condition on a point having at least three non-zero coordinates. 
\end{lemma}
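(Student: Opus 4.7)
The plan is to unpack the definition $m \ast_g a := m a^g$ of the twisted module $M_p^g$ and compute the resulting action on the canonical generator. First, using the grading \eqref{eq: gengrading}, an element $a \in A_h$ satisfies $a^g = \chi_{h^{-1}}(g)\, a$. Since $G$ has exponent $2$ we have $h^{-1}=h$ for every generator's degree, and the character values lie in $\{\pm 1\}$; writing the degrees of $x_0,x_1,x_2,x_3$ as $h_0=e,\, h_1=g_1,\, h_2=g_2,\, h_3=g_1g_2$, I would read off
\[
x_i^g \;=\; \chi_{h_i}(g)\, x_i \qquad (i=0,1,2,3).
\]

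Next, I would apply this to the point module $M_p$ with $m_0^p \cdot x_i = p_i\, m_1^p$ (the case $j=0$ of the notation $m_j^p \cdot x_i = \alpha_{j,i}^p m_{j+1}^p$ introduced before Proposition \ref{prop: fatpoints}). The twisted action then gives
\[
m_0^p \ast_g x_i \;=\; m_0^p \cdot x_i^g \;=\; \chi_{h_i}(g)\, p_i\, m_1^p,
\]
which, since the isomorphism class of a point module is determined by the action on its generator, exhibits $M_p^g$ as the point module of the point with homogeneous coordinates $\bigl(p_0,\, \chi_{g_1}(g)p_1,\, \chi_{g_2}(g)p_2,\, \chi_{g_1g_2}(g)p_3\bigr)$. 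By the point scheme parameterisation from \cite[Theorem 1.4]{shelton1999embedding} this tuple lies on $\Gamma$ and equals $p^g$ by definition. Substituting $g=e, g_1, g_2, g_1g_2$ and evaluating the characters---with the identification $G \cong G^\vee$ fixed to be the one consistent with the matrix embedding \eqref{eq: matrixembedding}---then yields the four formulas of \eqref{eq: Gactonpoints}.

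The last assertion is immediate from the shape of these formulas: each map $p \mapsto p^g$ simply multiplies each coordinate independently by $\pm 1$, so $p_i=0$ if and only if $(p^g)_i=0$. In particular, the number of non-zero coordinates is $G$-invariant, and the condition of having at least three non-zero coordinates is preserved. The only mild obstacle is pinning down the four character values consistently with the paper's sign conventions; this is done once by verifying compatibility with the matrices $v_0,\ldots,v_3$ of Lemma \ref{lem: matrixgens}, after which the statement reduces to a direct substitution.
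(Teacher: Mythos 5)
Your proof is correct and rests on the same core computation as the paper's: evaluating the action $x_i^g = \chi_{h_i}(g)\,x_i$ on the degree-one generators and reading off the new point. The only real difference is bookkeeping. The paper first establishes the isomorphism $(M_p)^g \cong A/g^{-1}(I_p)$ by exhibiting an explicit module map and then identifies the point from the $G$-action on the generators $p_0 x_j - p_j x_0$ of $I_p$; you instead work on the module side, computing $m_0^p \ast_g x_i = m_0^p \cdot x_i^g$ directly and invoking the fact that a point module is determined (up to isomorphism) by the action on its generator. That is the dual viewpoint --- ideal vs.\ multiplication map $A_1 \to \mathrm{Hom}_k\bigl((M_p)_0,(M_p)_1\bigr)$ --- and lets you skip the paper's intermediate isomorphism. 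One small caveat: the equality of homogeneous coordinates you want is $\alpha^p_{0,i} \propto p_i$, which holds projectively; it is worth noting this to cover the case $p_0 = 0$, where the paper's explicit generators $p_0 x_j - p_j x_0$ of $I_p$ must also be replaced. The remark about pinning down the character values via Lemma~\ref{lem: matrixgens} is a slight misattribution --- those matrices encode the cocycle $\mu$, not the characters --- but the characters are already determined by the grading \eqref{eq: gengrading} together with the definition $A_g = A^{\chi_{g^{-1}}}$, so the values you use are the right ones and the conclusion matches \eqref{eq: Gactonpoints}.
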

\begin{proof}
Consider the point module $M_p=A/I_p$, where $I_p$ is a right ideal. For an element $g \in G$ we claim that $(M_p)^g
\cong A/g^{-1}(I_p)$. To see this, consider the map $\varphi: (M_p)^g \rightarrow A/g^{-1}(I_p)$ defined by $a + I_p \mapsto a^{g^{-1}}
+ g^{-1}(I_p)$ for $a \in A$. This map is well-defined and preserves the $\N$-graded vector space structures, thus it remains to check that it is an isomorphism of right $A$-modules. For $a, b \in A$ one
has
\begin{align*}
\varphi((a+I_p) \ast_g b) &= \varphi(ab^{g} + I_p) = (ab^g)^{g^{-1}} + g^{-1}(I_p) = a^{g^{-1}}b + g^{-1}(I_p),\\
\varphi(a+I_p)b &= (a^{g^{-1}} + g^{-1}(I_p))b = a^{g^{-1}}b + g^{-1}(I_p).
\end{align*}
Thus $\varphi$ is a homomorphism of $\N$-graded right $A$-modules. As $G$ acts by automorphisms, the modules $(M_p)^g$ and $A/g^{-1}(I_p)$ have the same Hilbert series. Consequently, $\varphi$ is an isomorphism.

Note that the right ideal $I_p$ is generated by the degree 1 elements $p_0x_j - p_jx_0$ for $j=1,2,3$. The isomorphism $(M_p)^g \cong A/g^{-1}(I_p)$ and \cite[Lemma 4.2]{davies2014cocycle1} indicate that the behaviour of the three
generators of $I_p$ under the action of $g^{-1}$ govern $p^g$. The result is clear when $g=e$ as its action is trivial. We give a proof for $g=g_1$, with the remaining two cases being similar. As $g_1$
has order 2, one has
\begin{gather}
\begin{aligned}\label{eq: ptmoddeg1gensact}
(p_0x_1 - p_1 x_0)^{g_{1}} &= p_0x_1^{g_{1}} - p_1 x_0^{g_{1}} = p_0x_1 - p_1 x_0,\\
(p_0x_2 - p_2 x_0)^{g_{1}} &= p_0x_2^{g_{1}} - p_2 x_0^{g_{1}} = -p_0x_2 - p_2 x_0,\\
(p_0x_3 - p_3 x_0)^{g_{1}} &= p_0x_3^{g_{1}} - p_1 x_3^{g_{1}} = -p_0x_3 - p_3 x_0.
\end{aligned} 
\end{gather}
The right ideal generated by the three elements on the right-hand side of \eqref{eq: ptmoddeg1gensact} corresponds to
the point $q = (p_0,p_1,-p_2,-p_3)$. Thus $p^{g_{1}}=q$ as in the statement of the lemma.
\end{proof}

In Proposition \ref{prop: fatpoints} we constructed fat point modules over the twist $A^{G,\mu}$. One can use such modules to construct direct sums of point modules over $A$ as we show in the next result. The key to being able to do this is performing the same cocycle twist on $A^{G,\mu}$ to recover $A$: to see this note that $A^{G,\mu}$ has the same underlying $G$-grading as $A$ and that $\mu$ takes only the values $\pm 1$. In particular, this gives us an embedding of $A$ inside $M_2(A^{G,\mu})$. 
\begin{proposition}\label{prop: fatpointsotherwaygen}
Consider $M_p^2$, a fat point module over $A^{G,\mu}$ constructed in Proposition \ref{prop: fatpoints}. The direct sum
$(M_p^2)^2$ is an $\N$-graded right $M_2(A^{G,\mu})$-module. On restriction to a module over the subalgebra $A$, one has
a decomposition of $\N$-graded right $A$-modules
\begin{equation}\label{eq: orbitdecomp}
(M_p^2)^2 \cong \bigoplus_{g \in G} M_{p^{g}}.
\end{equation}
\end{proposition}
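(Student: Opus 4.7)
My plan is to exploit the observation made just before the statement: because $\mu$ takes values in $\{\pm 1\}$ and $A^{G,\mu}$ inherits the same $G$-grading as $A$, applying the same twist to $A^{G,\mu}$ recovers $A$ up to isomorphism. This yields an embedding $A \hookrightarrow M_2(A^{G,\mu})$ in which the four generators of $A$ take matrix forms analogous to those in \eqref{eq: matrixembedding}, but with each $v_i$ replaced by the corresponding $x_i$. Composing with the embedding $A^{G,\mu} \hookrightarrow M_2(A)$ of Lemma \ref{lem: matrixgens} gives an explicit description of the restricted $A$-action on $(M_p^2)^2$ through $M_4(A)$.

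Writing an element of $(M_p^2)^2$ as a 4-tuple $(a_1, a_2, a_3, a_4)$ with $a_i \in M_p$, direct computation shows that each $x_i$ acts by right-multiplication by $x_i$ on every entry combined with a permutation-with-signs of the four coordinates. For each $g \in G$ I will exhibit a specific degree-0 element:
\begin{equation*}
\tilde m_e = (m_0^p, 0, 0, m_0^p),\;\; \tilde m_{g_1} = (m_0^p, 0, 0, -m_0^p),\;\; \tilde m_{g_2} = (0, m_0^p, m_0^p, 0),\;\; \tilde m_{g_1g_2} = (0, m_0^p, -m_0^p, 0),
\end{equation*}
and verify by direct computation that the defining linear forms $(p^g)_i x_j - (p^g)_j x_i$ of the point module $M_{p^g}$ annihilate $\tilde m_g$, using the coordinates of $p^g$ from Lemma \ref{lem: actiononpoints}. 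Equivalently, one checks that $\tilde m_g \cdot x_i$ is proportional to $(p^g)_i$ times the degree-1 analogue $\tilde m_g^{(1)}$ of $\tilde m_g$ (with $m_1^p$ in place of $m_0^p$), the signs matching exactly those of the coordinate flips describing the $G$-action on $\Gamma$.

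Consequently there is a graded $A$-module homomorphism $\psi_g\colon M_{p^g} \to (M_p^2)^2$ sending $m_0^{p^g}$ to $\tilde m_g$. Since $\tilde m_g \neq 0$ and $M_{p^g}$ is 1-critical, $\psi_g$ is injective, and its image in degree $n$ is the one-dimensional span of the obvious degree-$n$ analogue $\tilde m_g^{(n)}$. The four vectors $\tilde m_g^{(n)}$ are linearly independent in the 4-dimensional space $((M_p^2)^2)_n$, so the assembled map $\psi\colon \bigoplus_{g \in G} M_{p^g} \to (M_p^2)^2$ is injective; since both sides have Hilbert series $4/(1-t)$, $\psi$ is an isomorphism in every degree. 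The main obstacle is purely bookkeeping: composing the two cocycle twists correctly to extract the matrix representatives of $x_0, \dots, x_3$ in $M_4(A)$, and then finding four generators in $((M_p^2)^2)_0$ whose sign-and-permutation eigenstructure matches exactly the coordinate flips of Lemma \ref{lem: actiononpoints}. Once these eigenvectors are identified, the remainder of the argument is immediate.
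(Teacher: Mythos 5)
Your proposal follows the same route as the paper: embed $(M_p^2)^2$ as the $M_4(A)$-module $M_p^4$ by composing the two cocycle-twist embeddings, find the explicit $4\times 4$ matrix generators, and identify the four cyclic submodules generated in degree $0$ by $(m_0,0,0,\pm m_0)$ and $(0,m_0,\pm m_0,0)$ as point modules $M_{p^g}$ for the four $g\in G$. (The paper factors this through the intermediate splitting $M_p^4=(M_p,0,0,M_p)\oplus(0,M_p,M_p,0)$, but the end result and the computations are identical.) One small wrinkle in your write-up: the sentence ``Since $\tilde m_g\neq 0$ and $M_{p^g}$ is 1-critical, $\psi_g$ is injective'' is out of order — 1-criticality only says that a nonzero kernel has finite codimension, so you first need the observation (which you make in the very next clause) that the image contains the nonzero vectors $\tilde m_g^{(n)}$ in every degree $n$ and is therefore infinite-dimensional; only then does 1-criticality force $\ker\psi_g = 0$.
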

\begin{proof}
The fat point module $M_p^2$ was obtained by restricting the $M_2(A)$-module $M_p^2$ to $A^{G,\mu}$. Thus
$(M_p^2)^2$ can be considered as the $M_4(A)$-module $M_p^4$, which becomes an $M_2(A^{G,\mu})$-module
upon restriction. One can then regard $M_p^4$ as an $A$-module by restricting a second time, with the action on homogeneous pieces given by $4 \times 4$ matrices. Explicitly, the embedding of $A$ into $M_4(A)$ is defined on generators by
\begin{align*}\label{eq: actonfatptdoubles}
x_0 &\mapsto \begin{pmatrix} x_0 &0&0&0 \\0& x_0 &0&0 \\ 0&0& x_0 &0 \\ 0&0&0& x_0
\end{pmatrix},\;\; 
x_1 \mapsto \begin{pmatrix} x_1 &0&0&0 \\0& -x_1 &0&0 \\ 0&0& -x_1 &0 \\ 0&0&0&
x_1 \end{pmatrix},\\ 
x_2 &\mapsto \begin{pmatrix} 0&0&0& x_2 \\0&0& x_2 &0 \\ 0& x_2 &0&0 \\ x_2 &0&0&0
\end{pmatrix},\;\; 
x_3 \mapsto \begin{pmatrix} 0&0&0& x_3 \\0&0& -x_3 &0 \\ 0& -x_3 &0&0 \\ x_3
&0&0&0 \end{pmatrix}.
\end{align*}

It is easy to see from these matrices that there is a decomposition of $A$-modules
\begin{equation}\label{eq: forbitdecomp}
M_p^4=(M_p,0,0,M_p) \oplus (0,M_p,M_p,0). 
\end{equation}

We claim that the decomposition in the statement of the proposition is then given by the isomorphisms of right
$A$-modules
\begin{gather}
\begin{aligned}\label{eq: decompasptmods}
(M_p,0,0,M_p) &\cong (m_0,0,0,m_0)A \oplus (m_0,0,0,-m_0)A \cong M_p \oplus M_{p^{g_{1}}}, \\
(0,M_p,M_p,0) &\cong (0,m_0,m_0,0)A \oplus (0,m_0,-m_0,0)A \cong M_{p^{g_{2}}} \oplus M_{p^{g_{1}g_{2}}}.
\end{aligned}
\end{gather}
To see this, note that the submodules 
\begin{equation*}
(m_0,0,0,m_0)A,\; (m_0,0,0,-m_0)A, \; (0,m_0,m_0,0)A \; \text{ and }\;(0,m_0,-m_0,0)A 
\end{equation*}
each have Hilbert series 1/(1-t) and are cyclic. By calculating which degree 1 elements of $A$ annihilate them, one can see that these cyclic submodules are isomorphic to the point modules indicated in \eqref{eq: decompasptmods}.
\end{proof}

Proposition \ref{prop: fatpointsotherwaygen} allows us to determine the isomorphisms in $\text{grmod}(A^{G,\mu})$
between the fat point modules constructed in Proposition \ref{prop: fatpoints}. 
\begin{corollary}\label{cor: fatpointisoclasses}
The only isomorphisms in $\text{grmod}(A^{G,\mu})$ between the fat point modules described in Proposition \ref{prop: fatpoints} are of the form $M_p^2 \cong M_{p^g}^2$ for all $g \in G$. 
\end{corollary}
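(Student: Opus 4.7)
The plan is to first construct explicit isomorphisms $M_p^2 \cong M_{p^g}^2$ for each $g \in G$, and then to show via Proposition \ref{prop: fatpointsotherwaygen} that no further isomorphisms among the family exist.

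For existence, I would define, for each $g \in G$, a map $\varphi_g : M_p^2 \to M_{p^g}^2$ by combining the canonical graded linear bijection $M_p \to M_{p^g}$ sending $m_j^p$ to $m_j^{p^g}$ (well-defined since each graded piece is one-dimensional) with right multiplication by the image of $g$ under \eqref{eq: diagmat}. A direct check against the matrix action of $v_0, v_1, v_2, v_3$ from Lemma \ref{lem: matrixgens}, using the fact that the structure constants of $M_{p^g}$ differ from those of $M_p$ by the character values implicit in the proof of Lemma \ref{lem: actiononpoints}, shows that $\varphi_g$ is an $A^{G, \mu}$-module isomorphism.

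For uniqueness, suppose $\varphi : M_p^2 \to M_q^2$ is an isomorphism in $\text{grmod}(A^{G, \mu})$. Then $\varphi \oplus \varphi = \varphi \otimes \mathrm{id}_{k^2}$ is an isomorphism of $M_2(A^{G, \mu})$-modules $(M_p^2)^2 \to (M_q^2)^2$. Restricting along the embedding $A \hookrightarrow M_2(A^{G, \mu})$ constructed in the proof of Proposition \ref{prop: fatpointsotherwaygen} yields a graded $A$-module isomorphism $\bigoplus_{g \in G} M_{p^g} \cong \bigoplus_{g \in G} M_{q^g}$. Since point modules over $A$ are $1$-critical and distinct points on $\Gamma$ give non-isomorphic such modules, Krull--Schmidt forces the equality of multisets $\{ p^g : g \in G \} = \{ q^g : g \in G \}$, and hence $q = p^g$ for some $g \in G$.

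The main obstacle I anticipate is the sign bookkeeping in the existence step: tracking the signs introduced by the matrices in \eqref{eq: diagmat} alongside the character values appearing in the $G$-action on $\Gamma$. Once these are in hand the verification is mechanical, and the uniqueness half then reduces cleanly to the standard uniqueness of decomposition of critical graded modules, with Proposition \ref{prop: fatpointsotherwaygen} providing the necessary bridge from $A^{G, \mu}$-modules back down to $A$-modules.
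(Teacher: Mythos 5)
Your proposal follows the paper's structure closely: existence via right multiplication by the matrices of \eqref{eq: diagmat}, uniqueness via the decomposition $(M_p^2)^2 \cong \bigoplus_{g \in G} M_{p^g}$ from Proposition~\ref{prop: fatpointsotherwaygen}. The one place where you diverge from the paper is the final step of the uniqueness argument. The paper invokes \cite[Proposition 1.5]{smith1992the}, which gives uniqueness of critical composition factors only \emph{in high degree}, i.e.\ it yields $\pi(M_q) \cong \pi(M_{p^g})$ in $\text{qgr}(A)$; it then converts this to an honest isomorphism in $\text{grmod}(A)$ by shifting ($M_q[n]_{\geq 0} \cong M_{q^{\sigma^n}}$) and using that $\sigma$ acts bijectively on $\Gamma$. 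You instead appeal to Krull--Schmidt directly to obtain $M_q \cong M_{p^g}$ in $\text{grmod}(A)$, sidestepping the shift.

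This shortcut does work, but the justification you give is slightly off and is worth tightening. Krull--Schmidt--Azumaya requires that the indecomposable summands have \emph{local} endomorphism rings, and $1$-criticality alone gives indecomposability, not local endomorphism rings. For point modules specifically, however, locality is easy: any nonzero degree-$0$ endomorphism of $M_p$ must be nonzero on the cyclic generator $m_0^p$, hence surjective, hence bijective by Hilbert series; so $\text{End}_{\text{grmod}(A)}(M_p) = k$ is local. You should make this explicit rather than leaning on ``$1$-critical.'' With that supplied, your Krull--Schmidt route is a clean alternative to the shift argument, and arguably slightly shorter; the paper's route via \cite{smith1992the} has the advantage of extending unchanged to fat point modules of higher multiplicity, which is how the same pattern is reused in the proof of Theorem~\ref{thm: finitepointscheme}.

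One small wrinkle in the existence half: after identifying $M_{p^g}$ with $(M_p)^g$ (same underlying graded vector space), your ``canonical graded linear bijection $m_j^p \mapsto m_j^{p^g}$'' is just the identity, so $\varphi_g$ collapses to right multiplication by the matrix for $g$, exactly as in the paper. The sign bookkeeping you flag amounts to verifying $V_i T_g = T_g V_i^g$ for each generator matrix $V_i$ from \eqref{eq: matrixembedding} and each $T_g$ from \eqref{eq: diagmat}, which is the mechanical check you anticipate.
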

\begin{proof}
To see that $M_p^2 \cong M_{p^g}^2$ holds for all $g \in G$, we first use the final part of Lemma \ref{lem:
actiononpoints}: if $p \in \Gamma$ has three non-zero coordinates then so does $p^g$ for all $g \in G$. Thus one can
construct $M_{p^g}^2$ as in Proposition \ref{prop: fatpoints} for any $g \in G$. The isomorphisms are governed by the
matrices in \eqref{eq: diagmat}:
\begin{itemize}
 \item[(i)] $M_p^2 \cong M_{p^{g_{1}}}^2$ via right multiplication by $\twomat{1}{0}{0}{-1}$;
 \item[(ii)] $M_p^2 \cong M_{p^{g_{2}}}^2$ via right multiplication by $\twomat{0}{1}{1}{0}$;
 \item[(iii)] $M_p^2 \cong M_{p^{g_{1}g_{2}}}^2$ via right multiplication by $\twomat{0}{-1}{1}{0}$.
\end{itemize}

Suppose now that $p, q \in \Gamma$ both have at least three non-zero coordinates and their associated fat point modules
are isomorphic, thus $M_p^2 \cong M_{q}^2$. By Proposition \ref{prop: fatpointsotherwaygen}, we can take direct sums of
these fat point modules and consider them as right $M_2(A^{G,\mu})$-modules. On restricting them down to the subalgebra
$A$ we obtain via \eqref{eq: orbitdecomp} the following isomorphism:
\begin{equation}\label{eq: orbitdecomp2}
\bigoplus_{g \in G} M_{p^{g}} \cong \bigoplus_{g \in G} M_{q^{g}}.
\end{equation}

Both modules in \eqref{eq: orbitdecomp2} are f.g.\ $\N$-graded modules of GK dimension 1. By \cite[Proposition
1.5]{smith1992the} the factors in a critical composition series of such a module are, when considered in high degree,
unique up to permutation and isomorphism. In our case this implies that we must have $\pi(M_q) \cong \pi(M_{p^g})$ for
some $g \in G$. Thus there exists some $n \in \N$ such that $M_q[n]_{\geq 0}\cong M_{p^g}[n]_{\geq 0}$ in
$\text{grmod}(A)$. But $M_q[n]_{\geq 0} \cong M_{q^{\sigma^{n}}}$ for any point in $\Gamma$, in which case one has an
isomorphism in $\text{grmod}(A)$ of the form $M_{q^{\sigma^{n}}} \cong M_{(p^g)^{\sigma^{n}}}$. Since point modules over
$A$ are parameterised up to isomorphism by the closed points of $\Gamma$ and $\sigma$ is an automorphism, it follows that $q=p^g$.
\end{proof}

\section{Twists of 4-dimensional Sklyanin algebra}\label{sec: twist4dimsklyanin}
In this section we focus our attention on twists of 4-dimensional Sklyanin algebras. We begin in \S\ref{subsec: applyprops} by applying some machinery from previous work \cite{davies2014cocycle1} to show that such twists have many good properties. Furthermore, we show that our later study of one particular twist encapsulates many other twists to which it is isomorphic up to a change of parameters. In \S\ref{subsec: modulesoversklytwist} we apply the results of \S\ref{sec: modules} to show that the point scheme of one such twist consists of precisely 20 points, as well as describing its fat point modules of multiplicity 2. Since the point scheme parameterises point modules, we show that there are 20 isomorphism classes of them. 

Let us fix the notation $A:= A(\alpha, \beta, \gamma)$, thus $A$ denotes a 4-dimensional Sklyanin algebra as defined in \S\ref{subsec: 4dimsklyanin}. We will omit the parameters if no ambiguity will arise. We also fix the group $G := (C_2)^2 = \langle g_1, g_2 \rangle$, the 2-cocycle $\mu$ defined in Hypotheses \ref{hyp: genhypforfatpts}, and the isomorphism $G \cong G^{\vee}$ given by $g \mapsto \chi_g$, where 
\begin{equation*}
\chi_g(h) = \left\{ \begin{array}{cl} 1 & \text{if }g=e\text{ or }h \in \{e, g\}, \\ -1 & \text{otherwise}, \end{array}\right. 
\end{equation*}
for all $g, h \in G$. 

Recall from \S\ref{subsec: 4dimsklyanin} that associated to $A$ there is an elliptic curve $E$, along with automorphism of the curve $\sigma$. We will assume that the following hypothesis holds throughout the remainder of the paper.
\begin{hypsing}\label{hypsing: siginforder}
The automorphism $\sigma$ has infinite order.
\end{hypsing}

\subsection{The twist and its properties}\label{subsec: applyprops}
In this section we apply the results of \cite{davies2014cocycle1} to 4-dimensional Sklyanin algebras. Let us begin by defining a $G$-grading on $A(\alpha,\beta,\gamma)$ under which the standard generators are homogeneous and lie in the following components:
\begin{equation}\label{eq: 4sklyaningrading}
x_0 \in A(\alpha,\beta,\gamma)_{e},\; x_1 \in A(\alpha,\beta,\gamma)_{g_{1}},\; x_2 \in A(\alpha,\beta,\gamma)_{g_{2}},\; x_3 \in A(\alpha,\beta,\gamma)_{g_{1}g_{2}}.
\end{equation}

The algebra $A(\alpha,\beta,\gamma)^{G,\mu}$ has generators $v_0,v_1,v_2,v_3$ and the following six quadratic relations:
\begin{equation}\label{eq: twistrelns}
\begin{array}{ll}
f_1^{\mu}:=[v_0,v_1]-\alpha[v_2,v_3], &  f_2^{\mu}:=[v_0,v_1]_{+} -[v_2,v_3]_{+}, \\ \relax
f_3^{\mu}:=[v_0,v_2]-\beta[v_3,v_1], &  f_4^{\mu}:=[v_0,v_2]_{+} -[v_3,v_1]_{+}, \\ \relax
f_5^{\mu}:=[v_0,v_3]+\gamma[v_1,v_2], &  f_6^{\mu}:=[v_0,v_3]_{+} +[v_1,v_2]_{+}.\end{array}
\end{equation}
We will keep this notational convention going forward; generators of the Sklyanin algebra $A$ will be denoted by $x_i$, whereas those of a cocycle twist of it, $A^{G,\mu}$, will be denoted by $v_i$.

Results from our previous work lead to the following theoerem.
\begin{theorem}\label{theorem: 4sklytwistprops}
Assume that the parameter triple $(\alpha, \beta, \gamma)$ satisfies \eqref{eq: 4sklyanincoeffcond} and is not equal to either $(-1,1,\gamma)$, $(\alpha,-1,1)$ or  $(1,\beta,-1)$. Then the algebra $A(\alpha,\beta,\gamma)^{G,\mu}$ is finitely generated in degree 1 and has the following properties:
\begin{itemize}
 \item[(i)] it is a strongly noetherian;
 \item[(ii)] it has Hilbert series $(1-t)^{-4}$;
 \item[(iii)] it is AS-regular domain of global dimension 4;
 \item[(iv)] it is Auslander regular;
 \item[(v)] it satisfies the Cohen-Macaulay property;
 \item[(vi)] it is Koszul.
\end{itemize}
\end{theorem}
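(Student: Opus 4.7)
My plan is to derive the theorem as an application of the general transfer theorems for cocycle twists established in the companion paper \cite{davies2014cocycle1}, using the fact that each property on the list is already known to hold for the 4-dimensional Sklyanin algebra $A$ itself under Hypothesis \ref{hypsing: siginforder}.

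First I would verify the compatibility data needed to form the twist. The assignment in \eqref{eq: 4sklyaningrading} defines a genuine $G$-grading on $A$ precisely when each of the six relations in \eqref{eq: 4sklyaninrelnsintro} lies in a single $G$-isotypic component; for example both $[x_0,x_1]$ and $[x_2,x_3]_+$ lie in $A_{g_1}$, making the first relation $G$-homogeneous, and the remaining five relations are checked in the same way. Equivalently, this gives an action of $G$ on $A$ by $\mathbb{N}$-graded algebra automorphisms. I would then derive the explicit twisted relations \eqref{eq: twistrelns} by applying $a \ast_\mu b = \mu(g,h)\,ab$ monomial by monomial, using the cocycle \eqref{eq: 2cocycle}; the only nontrivial values of $\mu$ are the signs appearing on products whose first factor has $g_1$-component $g_1$ and whose second factor has $g_2$-component $g_2$. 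These signs turn each commutator or anticommutator in \eqref{eq: 4sklyaninrelnsintro} into the corresponding relation of \eqref{eq: twistrelns}; for instance, under the identification of underlying vector spaces, $[v_2,v_3]$ in $A^{G,\mu}$ reads as $[x_2,x_3]_+$ in $A$, so $f_1^\mu = [v_0,v_1] - \alpha[v_2,v_3]$ is just the first Sklyanin relation transported into the twist.

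With this presentation in hand, properties (i)--(vi) follow from transfer theorems. Generation in degree $1$ and the Hilbert series (ii) are immediate, because cocycle twisting preserves the underlying $\mathbb{N}$-graded vector space and $A$ has Hilbert series $(1-t)^{-4}$ by classical Sklyanin theory. The homological properties---(i) strong noetherianity, (iii) AS-regularity of global dimension $4$, (iv) Auslander regularity, (v) the Cohen-Macaulay property, and (vi) Koszulity---should all be preserved under cocycle twists by a finite abelian group whose order is invertible in $k$, each corresponding to an explicit transfer statement in \cite{davies2014cocycle1}. Applied to $A$, whose possession of all these properties is standard (see \cite{smith1992regularity, levasseur1993modules, tate1996homological}), these deliver the same conclusions for $A^{G,\mu}$.

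The main obstacle is the domain property, which, unlike the items above, is not a categorical or Morita-style invariant and so need not survive a cocycle twist. This is where the exclusion of the triples $(-1,1,\gamma)$, $(\alpha,-1,1)$, $(1,\beta,-1)$ becomes essential: for these the coefficients in \eqref{eq: twistrelns} degenerate in a way that permits zero-divisors to appear. I expect \cite{davies2014cocycle1} to supply a sufficient condition---phrased in terms of the cocycle, the $G$-action, and primeness of the twisted group algebra $kG_\mu \cong M_2(k)$---under which $A^{G,\mu}$ remains a domain whenever $A$ does. I would close the argument by checking that this condition holds precisely away from the excluded parameter triples, so that combining it with the domain property of $A$ (which follows from the genericity conditions in \eqref{eq: 4sklyanincoeffcond}) yields the domain assertion in (iii).
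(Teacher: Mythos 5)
Your use of the transfer theorems from \cite{davies2014cocycle1} for properties (i), (ii), (iv), (v), (vi), and for the AS-regularity part of (iii) matches the paper's argument, and the verification that \eqref{eq: 4sklyaningrading} gives a genuine $G$-grading is fine. The gap is in how you handle the domain claim in (iii). You correctly observe that being a domain is not a categorical/Morita invariant and so does not transfer under cocycle twisting, but you then guess that \cite{davies2014cocycle1} must contain a bespoke sufficient condition for a twist of a domain to remain a domain, and that the excluded parameter triples are what make that condition hold. This is not what the paper does, and in fact no such general transfer criterion for domains is invoked. The paper's route is cleaner: once one has established that $A^{G,\mu}$ is a noetherian AS-regular algebra of global dimension $4$ with Hilbert series $(1-t)^{-4}$, the domain property comes for free from a structural theorem of Artin, Tate, and Van den Bergh \cite[Theorem 3.9]{artin1991modules}, which states that such algebras are automatically domains. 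So the domain assertion is a \emph{consequence} of the other properties already transferred, not an independently transferred property.

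Your speculation that the excluded triples $(-1,1,\gamma)$, $(\alpha,-1,1)$, $(1,\beta,-1)$ cause the coefficients in \eqref{eq: twistrelns} to ``degenerate in a way that permits zero-divisors to appear'' also does not hold up: the twisted relations have exactly the same coefficients $\alpha,\beta,\gamma$ as the Sklyanin relations (only some commutators are replaced by anticommutators and vice versa), and in any case these particular triples all contain an entry in $\{\pm 1\}$ and so are already excluded by the standing hypothesis \eqref{eq: 4sklyanincoeffcond}. The exclusion in the theorem statement is thus not carrying the weight you assign to it. To repair your argument, drop the search for a domain-transfer criterion, establish properties (i), (ii), and AS-regularity of global dimension $4$ exactly as you propose, and then conclude the domain property by citing \cite[Theorem 3.9]{artin1991modules}.
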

\begin{proof}
To see that $A(\alpha,\beta,\gamma)^{G,\mu}$ is finitely generated in degree 1, we use the fact that $A(\alpha,\beta,\gamma)$ has this property in conjunction with \cite[Lemma 4.3]{davies2014cocycle1}. By \cite[Corollary 4.12]{artin1999generic} $A(\alpha,\beta,\gamma)$ is strongly noetherian, whence by \cite[Corollary 4.11]{davies2014cocycle1} so is $A(\alpha,\beta,\gamma)^{G,\mu}$.

By \cite[Thm 5.5]{smith1992regularity} and \cite[Corollary 1.9]{levasseur1993modules} $A(\alpha,\beta,\gamma)$ has all of the remaining properties except being a domain. We can therefore use the following results from \cite{davies2014cocycle1}: Lemma 4.9 for (ii), Corollary 4.23 and Proposition 4.14 for (iii), Proposition 4.32 for (iv) and (v) and Proposition 4.25 for (vi) to show that the corresponding properties are preserved under twisting.  

To complete the proof, observe that since $A(\alpha,\beta,\gamma)^{G,\mu}$ is AS-regular of global dimension 4 it must be a domain by \cite[Theorem 3.9]{artin1991modules}.
\end{proof}

\label{subsec: permuteaction}
We next consider the twist with relations \eqref{eq: twistrelns} in respect of \cite[\S 3.2]{davies2014cocycle1}, where we considered group actions on $G$-gradings/cocycles.
\begin{proposition}\label{lem: 24to1}
Let $G$ be the Klein-four group and $(\alpha,\beta,\gamma)$ a parameter triple satisfying \eqref{eq: 4sklyanincoeffcond}. There are 24 actions of $G$ on $A(\alpha,\beta,\gamma)$ by $\N$-graded algebra automorphisms for which the following hold:
\begin{itemize}
 \item[(i)] $G$ acts diagonally on the generators $x_0,x_1,x_2$ and $x_3$;
 \item[(ii)] the action of $G$ on $A(\alpha,\beta,\gamma)_1$ affords the regular representation.
\end{itemize}
For any such action of $G$, consider the cocycle twist $A(\alpha,\beta,\gamma)^{G,\mu}$. All such algebras are isomorphic up to a change of parameters which still satisfy \eqref{eq: 4sklyanincoeffcond}.
\end{proposition}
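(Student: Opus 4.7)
The plan is to handle the two parts of the statement separately: first to count the 24 actions, then to show that each resulting twist reduces to one with the standard action after a change of parameters.

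\textbf{Counting the actions.} A diagonal $\N$-graded action of $G$ on $A(\alpha,\beta,\gamma)_1$ puts each generator $x_i$ into a single isotypic component, and thus assigns to it a character $\chi_i \in G^\vee$. Affording the regular representation is equivalent to $\chi_0,\chi_1,\chi_2,\chi_3$ being the four distinct characters of $G$, giving $4! = 24$ bijections. For each such bijection to extend to an algebra automorphism I need the relations \eqref{eq: 4sklyaninrelnsintro} to be $G$-homogeneous. A direct inspection of the six relations shows that this reduces to the three conditions $\chi_0\chi_1 = \chi_2\chi_3$, $\chi_0\chi_2 = \chi_1\chi_3$, and $\chi_0\chi_3 = \chi_1\chi_2$. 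Because each character of $G = (C_2)^2$ squares to the trivial one, all three conditions collapse to $\chi_0\chi_1\chi_2\chi_3 = \mathbf{1}$, which is automatic since the product of the four distinct characters of the Klein four-group is trivial. So every one of the 24 assignments genuinely defines an action by $\N$-graded algebra automorphisms.

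\textbf{Reducing to the standard action.} The group $S_4$ acts transitively on the 24 assignments by permuting the labels $\{0,1,2,3\}$, with the standard assignment \eqref{eq: 4sklyaningrading} being one distinguished element. For each non-standard assignment, obtained from the standard one by some $\pi \in S_4$, my aim is to construct an algebra isomorphism $\Phi_\pi\colon A(\alpha,\beta,\gamma) \to A(\alpha',\beta',\gamma')$ of the form $x_i \mapsto \epsilon_i x_{\pi(i)}$ with suitable signs $\epsilon_i \in \{\pm 1\}$, where $(\alpha',\beta',\gamma')$ is a permutation of $(\alpha,\beta,\gamma)$ possibly combined with sign flips. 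By construction $\Phi_\pi$ intertwines the $\pi$-transported action on the source with the standard action on the target, so by functoriality of the cocycle twist (see \cite[Lemma 3.6]{bazlov2012cocycle}) it lifts to an isomorphism $A(\alpha,\beta,\gamma)^{G,\mu} \cong A(\alpha',\beta',\gamma')^{G,\mu}$ of the twisted algebras.

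\textbf{Executing the construction.} To produce $\Phi_\pi$ for every $\pi$ it suffices to handle a generating set of $S_4$ --- for instance the transpositions $(0\,1), (1\,2)$ and $(2\,3)$ --- since general $\pi$ are obtained by composition. For each transposition I compare both sides of every relation in \eqref{eq: 4sklyaninrelnsintro} to read off the correct sign pattern $\epsilon_i$ and the corresponding change of parameters. The cubic identity $\alpha+\beta+\gamma+\alpha\beta\gamma = 0$ and the non-degeneracy $\{\alpha,\beta,\gamma\} \cap \{0,\pm 1\} = \emptyset$ are both invariant under permutation of $(\alpha,\beta,\gamma)$ and under an even number of simultaneous sign flips, so \eqref{eq: 4sklyanincoeffcond} persists for the transformed parameters.

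The main obstacle I anticipate is the combinatorial bookkeeping in the last step: the mixture of commutators and anticommutators in the Sklyanin relations means that not every naive label swap preserves the defining relations, and one must match each transposition with the correct choice of signs $\epsilon_i$ to land in another algebra of the family. This is not a conceptual hurdle but a finite case analysis, and once handled for a generating set of $S_4$ the full result follows immediately.
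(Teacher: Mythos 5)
Your counting of the 24 actions is fine and a bit more explicit than the paper's, which simply asserts that each permutation in $S_4$ gives a genuine $G$-grading. Your overall strategy for the second half --- find, for each $\pi \in S_4$, a $G$-equivariant isomorphism $\Phi_\pi\colon A(\alpha,\beta,\gamma) \to A(\alpha',\beta',\gamma')$ of the \emph{untwisted} Sklyanin algebras and then appeal to functoriality of the cocycle twist --- is in principle a valid alternative to the paper's route. The paper instead partitions $S_4$ into the four cosets $H_j = \{\sigma : \sigma^{-1}(0)=j\}$, shows via the coboundary table \eqref{eq: sigmaactcoboundary} that $\text{Aut}(G)$ acts trivially on the cohomology class of $\mu$ (so that each coset contributes a single isomorphism class of twist), and then exhibits explicit isomorphisms of the \emph{twisted} algebras via diagonal rescalings, with parameter changes by reciprocals.

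The concrete gap in your argument is the assertion that $\Phi_\pi$ can be taken of the form $x_i \mapsto \epsilon_i x_{\pi(i)}$ with $\epsilon_i \in \{\pm 1\}$. This fails for the transposition $(0\,1)$, which you explicitly include in your generating set. Writing $\Phi(x_0) = c_0 y_1$, $\Phi(x_1) = c_1 y_0$, $\Phi(x_2) = c_2 y_2$, $\Phi(x_3) = c_3 y_3$ and forcing the images of the six Sklyanin relations to lie in the relation space of the target algebra, one finds (after normalising $c_0 = 1$) the constraints $c_1 = c_2 c_3$, $\beta\, c_3^2 = 1$ and $\gamma\, c_2^2 = -1$. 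Since $\beta, \gamma \notin \{0, \pm 1\}$ by \eqref{eq: 4sklyanincoeffcond}, the equations $\beta\, c_3^2 = 1$ and $\gamma\, c_2^2 = -1$ have no solution in $\{\pm 1\}$: the scalars are forced to be of the form $\pm\beta^{-1/2}$, $\pm i\gamma^{-1/2}$, $\pm i(\beta\gamma)^{-1/2}$. (Compare the non-trivial scalars in the paper's Table \eqref{table: twistisos}.) So the ``finite case analysis'' step is not merely bookkeeping --- with scalars restricted to $\{\pm 1\}$, it has no solution for the transpositions $(0\,j)$, which are precisely the non-trivial cases.

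A second, smaller error: the constraint $\alpha+\beta+\gamma+\alpha\beta\gamma = 0$ is \emph{not} invariant under an even number of sign flips. Flipping two of the signs, say $(\alpha,\beta,\gamma)\mapsto(-\alpha,-\beta,\gamma)$, gives $-\alpha-\beta+\gamma+\alpha\beta\gamma = -2(\alpha+\beta)$ after using the original constraint, which is generically non-zero. Only the identity and the flip of all three signs preserve the constraint. Once the scalar claim is relaxed, the parameter change you would actually obtain for the transpositions $(0\,j)$ is of the form ``swap two parameters and negate all three'' (e.g.\ $(\alpha,\beta,\gamma)\mapsto(-\alpha,-\gamma,-\beta)$ for $(0\,1)$), which does preserve \eqref{eq: 4sklyanincoeffcond}; this is compatible with, but differently expressed from, the paper's reciprocal description.
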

\begin{proof}
We identify each $G$-graded component of $A$ with the corresponding index of the generator it contains in \eqref{eq: 4sklyaningrading}. In this manner, any other grading corresponding to an action of $G$ affording the regular
representation gives rise to a permutation in the symmetric group $S_4$. It is trivial to check using the relations in \eqref{eq: 4sklyaninrelnsintro} that
each permutation in $S_4$ corresponds to a genuine $G$-grading.

The action which induces the grading in \eqref{eq: 4sklyaningrading}
corresponds to the identity permutation, hence \emph{for this proof only the associated twist will be denoted
$A(\alpha,\beta,\gamma)^{G,\mu;(id)}$}. Our aim is therefore to understand
$A(\alpha,\beta,\gamma)^{G,\mu;\sigma}$ for other permutations $\sigma \in S_4$. 

Let us now assume that any $G$-grading is one of the 24 arising from an action of $G$ on the generators by the regular
representation. Note that $\text{Aut}(G) \cong S_3$, where automorphisms permute the order 2 elements. 

Two $G$-gradings are twists of each other by an automorphism of $G$ if and only if the identity components of their $G$-gradings are equal. Recast in terms of permutations, we obtain a partition of $S_4$ by the subsets 
\begin{equation*}
H_j=\{\sigma \in S_4 : \; \sigma^{-1}(0)=j\} \;\text{ for }\;j=0,1,2,3. 
\end{equation*}
Let us choose the identity map and the transpositions $(0j)$ for $j = 1,2,3$ as representatives of these subsets. 

We now show that the action of $\text{Aut}(G)$ on 2-cocycles of $G$ is trivial. To do this we first identify the subgroup of $S_4$ given by
\begin{equation}\label{eq: 0stabperms}
H_0=\{(id),(12),(23),(13),(123),(132)\},
\end{equation}
with $\text{Aut}(G)$. The identification we use arises naturally from our prior identification of elements of $G$ with the set $\{0,1,2,3\}$ using \eqref{eq: 4sklyaningrading}.

Observe that the following table describes for each permutation $\sigma \in H_0$ how the 2-cocycles $\mu$ and $\mu^{(\sigma^{-1})}$ are cohomologous via the function $\rho: G \rightarrow k^{\times}$, where $i\in k$ satisfies $i^2=-1$:
\begin{equation}\label{eq: sigmaactcoboundary}
\begin{array}{c|cccc}
\sigma & \rho(e) & \rho(g_1) & \rho(g_2) & \rho(g_1g_2) \\
\hline
(12) & 1 & -1 & 1 & 1 \\ 
(13) & 1 & i & 1 & i \\ 
(23) & 1 & 1 & i & i \\ 
(123) & 1 & i & -1 & i \\ 
(132) &  1 & 1 & i & -i 
\end{array}
\end{equation}

Now consider $\sigma \in H_j$. We already know that the $G$-grading associated to $\sigma$ is obtained from that associated to $(0j)$ by twisting the grading by an automorphism of $G$. But \eqref{eq: sigmaactcoboundary} shows that
the action of $\text{Aut}(G)$ on 2-cocycles is trivial. Thus by \cite[Proposition 3.5]{davies2014cocycle1} the twists $A(\alpha,\beta,\gamma)^{G,\mu;\sigma}$ and $A(\alpha,\beta,\gamma)^{G,\mu;(0j)}$ must be isomorphic.

The rows of Table \ref{table: twistisos} each give a grading on $A(\alpha, \beta, \gamma)$ for generators $x=(x_0,x_1,x_2,x_3)$ corresponding to some transposition $(0j)$. By twisting this grading using $\mu$ one obtains an algebra isomorphic to $A(\alpha', \beta', \gamma')^{G,\mu}$ as given in the last column, where the middle column gives the scaling of $v=(v_0,v_1,v_2,v_3)$ necessary to realise the isomorphism. 
\begin{equation}\label{table: twistisos}
\begin{array}{ c | c | c }
 \text{Grading on }x  & \text{Isomorphism, }v \cdot -^{T} & \text{Result} \\ \hline\noalign{\smallskip}
(g_1,e,g_2,g_1g_2) & \left(1,\frac{i}{\sqrt{\beta \gamma}},-\frac{1}{\sqrt{\gamma}},-\frac{i}{\sqrt{\beta}}\right) & A\left(\alpha, \frac{1}{\beta}, \frac{1}{\gamma}\right)^{G,\mu} \\

(g_2,g_1,e,g_1g_2) & \left(1,\frac{i}{\sqrt{\gamma}},\frac{i}{\sqrt{\alpha\gamma}},\frac{1}{\sqrt{\alpha}}\right) & 
A\left(\frac{1}{\alpha}, \beta, \frac{1}{\gamma}\right)^{G,\mu} \\
  
(g_1g_2,g_1,g_2,e) & \left(1,\frac{i}{\sqrt{\beta}},\frac{1}{\sqrt{\alpha}},\frac{i}{\sqrt{\alpha\beta}}\right) & 
A\left(\frac{1}{\alpha}, \frac{1}{\beta}, \gamma\right)^{G,\mu}
\end{array}
\end{equation}

If the parameters $(\alpha, \beta, \gamma)$ satisfy \eqref{eq: 4sklyanincoeffcond} then so does the triple obtained by taking the reciprocal of two of the three parameters or permuting them.
\end{proof}
Thus our study of $A^{G,\mu}$, whose relations are those in \eqref{eq: twistrelns}, will encompass many other twists up to a change of parameters.

\subsection{Modules over $A^{G,\mu}$}\label{subsec: modulesoversklytwist}
In this section we apply the results from \S\ref{sec: modules} to $A$ and $A^{G,\mu}$. The main result of this section is the following theorem.
\begin{theorem*}[{Theorem \ref{thm: finitepointscheme}}]
Suppose that $A$ is associated to the elliptic curve $E$ and automorphism $\sigma$, which has infinite order. Then the point scheme $\Gamma$ of the cocycle twist $A^{G,\mu}$ consists of 20 points.
\end{theorem*}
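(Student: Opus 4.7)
The plan is to exchange the roles of $A$ and $A^{G,\mu}$ in the constructions of Section \ref{sec: modules}, combined with the Smith--Staniszkis classification of fat points of multiplicity 2 over $A$. Since $A^{G,\mu}$ inherits the same underlying $G$-grading as $A$ and $\mu$ takes only the values $\pm 1$, repeating the cocycle twist returns us to $A$, yielding an embedding $A \hookrightarrow M_2(A^{G,\mu})$ that mirrors Lemma \ref{lem: matrixgens}. Moreover, $A^{G,\mu}$ itself satisfies Hypotheses \ref{hyp: genhypforfatpts} by Theorem \ref{theorem: 4sklytwistprops}, so the arguments of Section \ref{sec: modules} are available with $A$ and $A^{G,\mu}$ swapped.

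Given a point module $N_p$ over $A^{G,\mu}$ corresponding to $p \in \Gamma$ with at least three non-zero coordinates, form $N_p^2$ as an $M_2(A^{G,\mu})$-module and restrict to $A$ via the above embedding. Following the strategy of Proposition \ref{prop: fatpoints} and Corollary \ref{cor: fatpointisoclasses} in this dual setting, I expect $N_p^2$ to be a 1-critical fat point module of multiplicity 2 over $A$, and the assignment $p \mapsto N_p^2$ to descend to an injection from $G$-orbits of non-sporadic points of $\Gamma$ into the isomorphism classes of fat points of multiplicity 2 over $A$. By \cite[Main Theorem]{smith1993irreducible} combined with Hypothesis \ref{hypsing: siginforder}, the latter set has cardinality 4, so there are at most four free $G$-orbits in $\Gamma$, contributing at most 16 non-sporadic points.

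For the remaining points I would analyse the six bilinear equations cut out of \eqref{eq: twistrelns} by evaluating each relation at a pair $(p,q) \in \mathbb{P}^3 \times \mathbb{P}^3$: the point $p$ lies in $\Gamma$ exactly when the resulting $6 \times 4$ matrix of linear forms in $q$ has non-trivial kernel. A direct substitution shows that each coordinate axis $(1{:}0{:}0{:}0),(0{:}1{:}0{:}0),(0{:}0{:}1{:}0),(0{:}0{:}0{:}1)$ lies in $\Gamma$, is fixed by $G$, and corresponds to a single isomorphism class of point module, yielding 4 sporadic points. For points with exactly two non-zero coordinates, a short case-by-case substitution combined with the parameter restriction $\alpha, \beta, \gamma \notin \{0, \pm 1\}$ forces the partner $q$ to vanish, so no such $p$ is in $\Gamma$. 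These two inputs yield the upper bound $4 + 4\cdot 4 = 20$.

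For equality I need to realise each of the four Smith--Staniszkis classes as some $N_p^2$. The cleanest route is to run the explicit decomposition of Proposition \ref{prop: fatpointsotherwaygen} in reverse: working inside $M_2(A^{G,\mu})$, the module $F \otimes k^2$ attached to a fat point $F$ of $A$ splits, upon restriction to $A^{G,\mu}$, into a sum of cyclic submodules that one identifies as the $G$-orbit of a genuine point module $N_p$. The main obstacle is precisely this surjectivity step: upgrading the injection $p \mapsto N_p^2$ to a bijection between $G$-orbits of non-sporadic points in $\Gamma$ and the four Smith--Staniszkis classes. A fallback argument would be to combine the finiteness of $\Gamma$ established above with the Thom--Porteous formula applied to the degeneracy locus of the $6 \times 4$ matrix on $\mathbb{P}^3$, which gives the expected degree $\binom{6}{3}=20$ and forces equality.
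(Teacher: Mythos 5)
Your upper-bound argument mirrors the paper's: you invoke the duality between $A$ and $A^{G,\mu}$, use Proposition \ref{prop: fatpoints} in the dual direction to produce fat points of multiplicity~2 over $A$, and cap the number of $G$-orbits by the Smith--Staniszkis count of four. Your matrix-rank argument for excluding points with exactly two non-zero coordinates is also essentially what the paper does around \eqref{eq: 4sklyaninmatrixform}. So far this is the paper's strategy.

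The genuine gap is the lower bound, and you've diagnosed it correctly yourself. The paper closes it by brute force: Lemma \ref{lem: ptschemecontains} exhibits the sixteen non-sporadic points of $\Gamma$ explicitly (equations \eqref{eq: 4sklyanintwistpts}) and verifies that they satisfy the multilinearizations \eqref{eqn: multilins}. That computation is what Propositions \ref{prop: sklyaninfatpts} and \ref{prop: fatpointsotherway} subsequently rest on; your proposal to ``run Proposition \ref{prop: fatpointsotherwaygen} in reverse'' quietly presupposes it. Starting from an abstract SS-class $F$ of multiplicity 2 over $A$, forming $F^2$ over $M_2(A)$ and restricting to $A^{G,\mu}$, there is no a priori reason the result should split into four point modules rather than, say, two fat points of multiplicity 2 --- the decomposition in Proposition \ref{prop: fatpointsotherwaygen} relies on already knowing $F$ is of the form $\widetilde{M}_p^2$ for some explicit $p$. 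Without a concrete point to restart the argument from, the reverse direction doesn't get off the ground.

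The Thom--Porteous fallback does not repair this. It computes the degree of the expected-codimension degeneracy cycle of the $6\times 4$ matrix (and yes, $c_3$ of the relevant virtual bundle is $20h^3$), but that is a cycle-theoretic degree counting multiplicity, not a count of closed points. Even granting finiteness of $\Gamma$ from your upper bound, degree $20$ plus ``at most $20$ closed points'' is consistent with, say, $19$ points one of which has length $2$. To turn degree $20$ into $20$ points you would need reducedness of $\Gamma$, which your argument has not established. In the paper, reducedness comes out \emph{after the fact}, because twenty explicitly distinct points forced into a degree-$20$ scheme must each have length $1$; but you cannot run that inference without first exhibiting the twenty points.
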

It was shown in \cite[Proposition 2.4 and 2.5]{smith1992regularity} that point modules over $A$ are parameterised by the elliptic curve $E \subset \proj{k}{3}$ and four additional points $e_j$ which are defined in \eqref{eq: 4sklyanineipts}. Furthermore, in \cite[Corollary 2.8]{smith1992regularity} it is shown that $\sigma$ fixes the four exceptional points and is given by a translation on $E$. It is therefore clear that Theorem \ref{thm: finitepointscheme} shows that the geometry associated to $A^{G,\mu}$ is markedly different. 

Our proof of the main theoerem requires a series of results that in particular give a detailed description of the (fat) point modules over $A^{G,\mu}$. We begin with the following lemma.
\begin{lemma}\label{lem: multilins}
Consider the multilinearisations of the relations of $A^{G,\mu}$
given in \eqref{eq: twistrelns}:
\begin{gather}
\begin{aligned}\label{eqn: multilins}
&m_1:=v_{01}v_{12}-v_{11}v_{02}-\alpha v_{21}v_{32}+\alpha v_{31}v_{22},\\
&m_2:=v_{01}v_{12}+v_{11}v_{02}-v_{21}v_{32}-v_{31}v_{22}, \\
&m_3:=v_{01}v_{22}-v_{21}v_{02}+\beta v_{11}v_{32}-\beta v_{31}v_{12},\\
&m_4:=v_{01}v_{22}+v_{21}v_{02}-v_{31}v_{12}-v_{11}v_{32}, \\
&m_5:=v_{01}v_{32}-v_{31}v_{02}+\gamma v_{11}v_{22}-\gamma v_{21}v_{12},\\
&m_6:=v_{01}v_{32}+v_{31}v_{02}+v_{11}v_{22}+v_{21}v_{12}. 
\end{aligned}
\end{gather}
The closed subscheme $\Gamma_2 \subset \proj{k}{3} \times \proj{k}{3}$ determined by the equations in \eqref{eqn:
multilins} is isomorphic to the graph of the point scheme $\Gamma$ under a scheme automorphism $\phi$. Thus
\begin{equation}\label{eq: 4sklyaninpointlocus}
\Gamma_2=\{q = (p,p^{\phi}) \in \proj{k}{3} \times \proj{k}{3}: m_i(q)=0\;\text{ for $i=1,\ldots,6$} \}.
\end{equation}
Furthermore, the closed points of $\Gamma$ parameterise point modules over $A^{G,\mu}$.
\end{lemma}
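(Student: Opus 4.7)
My plan is to deduce this lemma directly from \cite[Theorem 1.4]{shelton1999embedding}, applied to the algebra $A^{G,\mu}$ in place of the generic algebra $A$ in Hypotheses \ref{hyp: genhypforfatpts}. The first step is to verify that $A^{G,\mu}$ satisfies those hypotheses. Recall that they require the algebra to be generated in degree 1 with Hilbert series $1/(1-t)^4$, noetherian and Auslander regular of global dimension 4, and Cohen-Macaulay. Each of these properties has already been established for $A^{G,\mu}$ in Theorem \ref{theorem: 4sklytwistprops}, so this step is essentially bookkeeping.

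Next I would invoke the standard point-module calculus. A point module $M = \bigoplus_{j \geq 0} k m_j$ over $A^{G,\mu}$ is determined by scalars $\alpha_{j,i} \in k$ with $m_j \cdot v_i = \alpha_{j,i}\, m_{j+1}$, and I associate to it the sequence of points $p_j := (\alpha_{j,0}:\alpha_{j,1}:\alpha_{j,2}:\alpha_{j,3}) \in \proj{k}{3}$. The requirement that the six quadratic generators $f_i^{\mu}$ of the defining ideal annihilate $m_0$ translates exactly into the six bilinear equations \eqref{eqn: multilins} evaluated at $(p_0,p_1)$; shifting the index $j$ shows that every consecutive pair $(p_j,p_{j+1})$ lies in the closed subscheme $\Gamma_2 \subset \proj{k}{3}\times\proj{k}{3}$. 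Conversely, any such sequence reconstructs a point module.

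Shelton and Vancliff's theorem then supplies the key geometric input: under the verified hypotheses, the projection $\pi_1 \colon \Gamma_2 \to \proj{k}{3}$ onto the first factor is a closed immersion, its image is the point scheme $\Gamma$, and $\phi := \pi_2 \circ \pi_1^{-1}$ is a scheme automorphism of $\Gamma$. Thus $\Gamma_2$ is precisely the graph $\{(p, p^{\phi})\}$, which is the content of \eqref{eq: 4sklyaninpointlocus}. The final assertion---that closed points of $\Gamma$ parameterise point modules---then follows by assembling a point module $M_p$ from the forward orbit $p, p^{\phi}, p^{\phi^2}, \ldots$ and checking, via the multilinear equations, that the resulting vector space is indeed a right $A^{G,\mu}$-module.

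The main (mild) obstacle is simply to make sure that the hypotheses of \cite[Theorem 1.4]{shelton1999embedding} are in force; since everything needed has already been proved in Theorem \ref{theorem: 4sklytwistprops}, no substantive difficulty arises, and the lemma reduces to a citation together with the above translation between point modules and the vanishing locus of the multilinearised relations.
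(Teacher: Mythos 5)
Your proposal matches the paper's proof: both verify that $A^{G,\mu}$ satisfies the hypotheses of Shelton and Vancliff's Theorem 1.4 via Theorem \ref{theorem: 4sklytwistprops} and then cite that theorem for the conclusion. Your additional paragraphs spelling out the point-module calculus and the role of the two projections are useful exposition but not a different argument; the paper leaves all of that implicit in the citation.
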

\begin{proof}
One can use Theorem \ref{theorem: 4sklytwistprops} to see that $A^{G,\mu}$ satisfies the hypotheses of \cite[Theorem 1.4]{shelton1999embedding}, from which the result follows.
\end{proof}
 
Before the next lemma, let us introduce some notation for certain points in $\proj{k}{3}$:
\begin{equation}\label{eq: 4sklyanineipts}
e_0:=(1,0,0,0),\;\; e_1:=(0,1,0,0),\;\; e_2:=(0,0,1,0),\;\; e_3:=(0,0,0,1).
\end{equation}
Let us also define $\pi_1: \proj{k}{3} \times \proj{k}{3} \rightarrow \proj{k}{3}$ to be projection to the first coordinate. 
\begin{lemma}\label{lem: ptschemecontains}
The scheme $\Gamma_2$ contains the closed points $(e_j,e_j)$ for $j = 0,1,2,3$, as well as the following 16 points, where $i\in k$ satisfies $i^2= -1$:
\begin{gather}
\begin{aligned}\label{eq: 4sklyanintwistpts}
&((1,\pm i, \pm i ,1),(1, \pm i, \pm i , 1)),\;\;\; ((1,\pm i, \mp i ,-1),(1, \pm i, \mp i , -1)),\\
&\left(\left(1,-(\beta \gamma)^{-\frac{1}{2}},\mp \gamma^{-\frac{1}{2}},\mp\beta^{-\frac{1}{2}} \right),\left(1,-(\beta
\gamma)^{-\frac{1}{2}},\pm \gamma^{-\frac{1}{2}},\pm\beta^{-\frac{1}{2}} \right)\right), \\
&\left(\left(1,(\beta \gamma)^{-\frac{1}{2}},\mp \gamma^{-\frac{1}{2}},\pm\beta^{-\frac{1}{2}} \right),\left(1,(\beta
\gamma)^{-\frac{1}{2}},\pm \gamma^{-\frac{1}{2}},\mp\beta^{-\frac{1}{2}} \right)\right), \\
&\left(\left(1,\pm i\gamma^{-\frac{1}{2}}, (\alpha \gamma)^{-\frac{1}{2}},\pm i \alpha^{-\frac{1}{2}} \right),\left(1,
\mp i\gamma^{-\frac{1}{2}},(\alpha \gamma)^{-\frac{1}{2}},\mp i \alpha^{-\frac{1}{2}} \right)\right), \\
&\left(\left(1, \mp i\gamma^{-\frac{1}{2}},- (\alpha \gamma)^{-\frac{1}{2}},\pm i \alpha^{-\frac{1}{2}} \right),\left(1,
\pm i\gamma^{-\frac{1}{2}},- (\alpha \gamma)^{-\frac{1}{2}},\mp i \alpha^{-\frac{1}{2}} \right)\right), \\
&\left(\left(1,\pm \beta^{-\frac{1}{2}},\pm i \alpha^{-\frac{1}{2}},i (\alpha \beta)^{-\frac{1}{2}} \right),\left(1,\mp
\beta^{-\frac{1}{2}},\mp i \alpha^{-\frac{1}{2}}, i (\alpha \beta)^{-\frac{1}{2}} \right)\right), \\
&\left(\left(1,\pm \beta^{-\frac{1}{2}},\mp i \alpha^{-\frac{1}{2}},- i (\alpha \beta)^{-\frac{1}{2}} \right),\left(1,
\mp \beta^{-\frac{1}{2}},\pm i \alpha^{-\frac{1}{2}},- i (\alpha \beta)^{-\frac{1}{2}} \right)\right).
\end{aligned}
\end{gather}
In particular, $\Gamma$ contains at least 20 points.
\end{lemma}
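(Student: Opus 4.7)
The plan is to prove the lemma by direct substitution of each of the 20 listed points into the six bilinear relations $m_1, \ldots, m_6$ of \eqref{eqn: multilins}, then conclude with a distinctness argument to extract the point count on $\Gamma$.

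The four points $(e_j, e_j)$ are immediate: every monomial of each $m_i$ has the form $v_{k1} v_{l2}$ with $k \neq l$, so when both factors are evaluated at a coordinate vector with a single non-zero entry, at least one factor vanishes. For the 16 remaining points I would exploit a sign-change symmetry of the multilinearisations to cut down the work. The 16 points split naturally into four families of four according to which parameters appear in their coordinates (none, $(\beta,\gamma)$, $(\alpha,\gamma)$, or $(\alpha,\beta)$), and within each family the four points differ by simultaneous sign-flips of pairs of coordinates in \emph{both} components of the pair. A direct inspection of \eqref{eqn: multilins} shows that each $m_i$ is multiplied by an overall $\pm 1$ under any such diagonal sign-flip, so the vanishing locus $\Gamma_2$ is preserved and it suffices to verify the six equations on one representative from each family.

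The main obstacle, such as it is, is then the substitution on these four representatives. For the parameter-free family the equations reduce to purely numerical cancellations between like terms; for the three parameter-dependent families, after clearing a common square-root factor from each $m_i$, each relation becomes a short identity in $\alpha, \beta, \gamma$ which should hold either unconditionally or via the non-degeneracy relation $\alpha + \beta + \gamma + \alpha\beta\gamma = 0$ of \eqref{eq: 4sklyanincoeffcond}. The book-keeping across six relations and four representatives is routine but requires care, and the cyclic structure of the Sklyanin relations \eqref{eq: 4sklyaninrelnsintro} will allow cycling $(\alpha, x_1) \to (\beta, x_2) \to (\gamma, x_3)$ to reuse the calculation for one parameter-dependent family when verifying the next.

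To conclude that $\Gamma$ has at least 20 points, Lemma \ref{lem: multilins} identifies $\Gamma_2$ with the graph of the scheme automorphism $\phi$, making $\pi_1: \Gamma_2 \to \Gamma$ an isomorphism. Pairwise distinctness of the 20 first coordinates in $\proj{k}{3}$ then follows from the conditions \eqref{eq: 4sklyanincoeffcond}: the four $e_j$ are standard basis points, and the constraint $\alpha, \beta, \gamma \notin \{0, \pm 1\}$ prevents the 16 parameter-involving coordinates from coinciding among themselves or with any $e_j$.
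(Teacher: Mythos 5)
Your proposal is correct and takes essentially the same approach as the paper: direct substitution of each point into the six multilinearisations (the paper dismisses this as a "routine verification... using \eqref{eq: 4sklyanincoeffcond} where necessary"), followed by distinctness of the first projections to count points on $\Gamma$. The extra organizational machinery you add — the observation that each monomial $v_{k1}v_{l2}$ in \eqref{eqn: multilins} has $k\neq l$ so the $(e_j,e_j)$ case is trivial, and the diagonal sign-flip symmetry reducing each family of four to a single representative — is a genuine streamlining of the bookkeeping that the paper leaves implicit, but it does not change the underlying method.
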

\begin{proof}
It is easy to see that $(e_j,e_j) \in \Gamma_2$ for $j = 0,1,2,3$, since these points satisfy the multilinearisations in \eqref{eqn: multilins}. A routine verification---using \eqref{eq: 4sklyanincoeffcond} where necessary---confirms that the points in \eqref{eq: 4sklyanintwistpts} also satisfy the
equations in \eqref{eqn: multilins}.

To see that the 20 points are distinct, note that by \eqref{eq: 4sklyaninpointlocus} it suffices to show that the points in $\pi_1(\Gamma)$ are distinct, which follows from \eqref{eq: 4sklyanincoeffcond}.
\end{proof}

We define $\Gamma'$ to be the set of points of the form $\pi_1(q)$ for $q$ in \eqref{eq: 4sklyanintwistpts}. Thus $|\Gamma'|=16$, with Lemma \ref{lem: ptschemecontains} implying that $\Gamma' \cup \{e_0,e_1,e_2,e_3\} \subseteq \Gamma$.

\begin{lemma}\label{lem: phiaut}
The scheme automorphism $\phi$ defined by \ref{lem: multilins} has order 2. In particular, it fixes 8 of the points given in Lemma \ref{lem: ptschemecontains}.
\end{lemma}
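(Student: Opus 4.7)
The plan is to deduce $\phi^2 = \mathrm{id}$ from a natural involution of $\Gamma_2$. Let $\tau: \proj{k}{3}\times\proj{k}{3} \to \proj{k}{3}\times\proj{k}{3}$ denote the swap of the two factors. Inspection of the multilinearisations in \eqref{eqn: multilins} shows that $m_1, m_3, m_5$---the multilinearisations of the commutator relations $f_1^\mu, f_3^\mu, f_5^\mu$---are antisymmetric under $\tau$, whereas $m_2, m_4, m_6$---coming from the anticommutator relations---are symmetric. In either case the vanishing locus is unchanged by $\tau$, so $\Gamma_2 = V(m_1, \ldots, m_6)$ is stable under $\tau$.

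By Lemma \ref{lem: multilins}, $\Gamma_2$ is precisely the graph of $\phi$, and its stability under $\tau$ is therefore equivalent to the identity $\phi^{-1} = \phi$, i.e.\ $\phi^2 = \mathrm{id}$. To rule out the degenerate possibility $\phi = \mathrm{id}$, I would point to rows 3--8 of \eqref{eq: 4sklyanintwistpts}, which exhibit closed points $p \in \Gamma$ with $p \neq p^\phi$; hence $\phi$ has order exactly 2. I do not anticipate any real obstacle in this step---it relies only on a symmetry observation about the form of the relations in \eqref{eq: twistrelns}---and in particular this approach sidesteps having to know the full list of closed points of $\Gamma$ in advance, which is useful since Theorem \ref{thm: finitepointscheme} has not yet been established.

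For the fixed-point count, a closed point $p \in \Gamma$ is fixed by $\phi$ if and only if $(p, p) \in \Gamma_2$, that is, if and only if the two projective coordinates in the corresponding element of $\Gamma_2$ agree. Running through the 20 closed points enumerated in Lemma \ref{lem: ptschemecontains}, this condition holds for the four points $(e_j, e_j)$ with $j = 0,1,2,3$ together with all the points listed in the first two rows of \eqref{eq: 4sklyanintwistpts}, yielding $4 + 2 + 2 = 8$ fixed points. The remaining 12 points split into six $\phi$-orbits of size 2, which provides a useful internal consistency check for the order-2 assertion.
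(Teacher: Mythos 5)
Your proof is correct and follows essentially the same route as the paper: both observe that the swap $v_{i1}\leftrightarrow v_{i2}$ preserves the vanishing locus $\Gamma_2$ (you sharpen this by noting the $m_{2j-1}$ are antisymmetric and the $m_{2j}$ symmetric, whereas the paper just says ``invariant''), deduce $\phi^2=\mathrm{id}$ from $\Gamma_2$ being a graph, and rule out $\phi=\mathrm{id}$ via a non-fixed point from Lemma \ref{lem: ptschemecontains}. The explicit fixed-point tally ($e_0,\dots,e_3$ plus the four points of the first display line of \eqref{eq: 4sklyanintwistpts}, totalling $8$, with the remaining $12$ in six $\phi$-orbits of size $2$) agrees with the paper's claim and with the labelling of $o_e$ in Lemma \ref{lem: gammaorbits}.
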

\begin{proof}
Notice that the multilinearisations in \eqref{eqn: multilins}
are invariant under the map $v_{i1} \leftrightarrow v_{i2}$. Thus $\Gamma_2$ is invariant under the automorphism which
switches components of the ambient space $\proj{k}{3} \times \proj{k}{3}$. 

We can conclude using \eqref{eq: 4sklyaninpointlocus} that $(p,p^{\phi}) \in \Gamma_2$ if and only if $(p^{\phi},p) \in \Gamma_2$. But since $\Gamma_2$ is the graph of $\Gamma$ under $\phi$, we must have $p=p^{\phi^{2}}$. Consequently, $\phi$ has order at most 2. To see that its order is precisely 2, observe that there are points listed in Lemma \ref{lem: ptschemecontains} which are not fixed.
\end{proof}

As we will now begin to work with modules over $A$ and $A^{G,\mu}$ simultaneously, it will be useful to introduce some notation for clarity. Let point modules over $A$ be denoted by $M_p$, while those over $A^{G,\mu}$ be denoted by $\widetilde{M}_p$.

In order to be able to apply the results from \S\ref{sec: modules} we need to check that both $A$ and $A^{G,\mu}$ satisfy Hypotheses \ref{hyp: genhypforfatpts}. To verify this one can use Theorem \ref{theorem: 4sklytwistprops}, as well as noting that the action of $G$ on $A^{G,\mu}$ is chosen to be that which induces the $G$-grading inherited from $A$.

The next lemma is straightforward, but very useful.
\begin{lemma}\label{lem: threegensannihilate}
For all $p \in E$ at least three coordinates of $p$ are non-zero. Furthermore, the action of $G$ on the point scheme of
$A$ restricts to $E$.
\end{lemma}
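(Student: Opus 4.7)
The plan is to treat the two parts of the lemma separately. For the first, I will use the decomposition $\Gamma = E \sqcup \{e_0,e_1,e_2,e_3\}$ recalled from \cite[Proposition 2.4]{smith1992regularity}. Since each of the exceptional points $e_j$ has only one non-zero coordinate and does not lie on $E$, it suffices to rule out the possibility that some point of $\Gamma$ has exactly two non-zero coordinates. This will be done by a direct case analysis using the multilinearizations of the defining relations of $A$, analogous to \eqref{eqn: multilins}.

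Fix a pair $\{i,j\} \subset \{0,1,2,3\}$ and suppose, aiming for a contradiction, that $p \in \Gamma$ has $p_i,p_j \neq 0$ and the remaining two coordinates zero. Let $q = p^{\phi} \in \proj{k}{3}$ denote the image of $p$ under the shift automorphism, so that $(p,q)$ lies on the graph cut out by the multilinearizations of the relations \eqref{eq: 4sklyaninrelnsintro}. The six relations group into three pairs; exactly one pair involves the combination $[x_i,x_j]$ and, upon substitution of $p$, yields two equations of the form $p_i q_j \pm p_j q_i = 0$ that immediately force $q_i = q_j = 0$. For each of the remaining two pairs of relations, substitution of $p$ produces a pair of equations of the shape $p_i q_k \pm p_j q_l = 0$ and $p_i q_k \pm \lambda p_j q_l = 0$ with $\lambda \in \{\alpha,\beta,\gamma\}$ and $\{k,l\} = \{0,1,2,3\} \setminus \{i,j\}$. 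Taking sums and differences within each such pair and invoking the hypothesis $\{\alpha,\beta,\gamma\} \cap \{\pm 1\} = \emptyset$ from \eqref{eq: 4sklyanincoeffcond} forces $q_k = q_l = 0$. Hence $q = 0$ in $\proj{k}{3}$, the desired contradiction. The argument is essentially symmetric across the six choices of $\{i,j\}$, with the specific role of each of $\alpha, \beta, \gamma$ determined by the choice.

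For the second part of the lemma, I will appeal to Lemma \ref{lem: actiononpoints}, which describes the $G$-action on $\Gamma$ in terms of sign changes on the coordinates. Each exceptional point $e_j$ has a single non-zero coordinate, so every $g \in G$ acts on it by negating at most that one non-zero entry, which does not change the projective class. Thus $G$ fixes each $e_j$ pointwise and must therefore permute the complementary subset $\Gamma \setminus \{e_0,e_1,e_2,e_3\} = E$. The only real friction in the proof is the bookkeeping of the six cases in the first part; the underlying mechanism is identical in each, and no ingredient is needed beyond the defining relations and the parameter restrictions \eqref{eq: 4sklyanincoeffcond}.
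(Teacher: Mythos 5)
Your proof is correct but takes a genuinely different route from the paper. The paper works directly with the explicit quadric equations defining $E$ in $\proj{k}{3}$ (from \cite[Proposition 2.5]{smith1992regularity}): for the first claim, two vanishing coordinates reduce those equations to a small linear system in the two remaining squares, which has no nonzero solution given \eqref{eq: 4sklyanincoeffcond}; for the second claim, those equations involve only squares, so the sign changes in \eqref{eq: Gactonpoints} manifestly preserve $E$. You instead use the decomposition $\Gamma = E \sqcup \{e_0,\dots,e_3\}$ together with the multilinearizations of the relations of $A$ to rule out any point of $\Gamma$ with exactly two non-zero coordinates, and for the second claim you observe that $G$ fixes each $e_j$ (since each has a single non-zero coordinate), so $G$ must stabilize the complement $E$. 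Both approaches are valid; yours is more portable in that it avoids the explicit form of the defining equations of $E$, at the cost of a six-case bookkeeping argument, while the paper's is shorter because it exploits that those equations are diagonal quadrics. One small imprecision to note: in the ``primary'' step of your case analysis (forcing $q_i = q_j = 0$), when the pair $\{i,j\}$ sits inside the $[\cdot,\cdot]_+$ term carrying the coefficient $\alpha$, $\beta$, or $\gamma$, you also need that coefficient to be non-zero (not just $\neq \pm 1$) before dividing; this is covered by \eqref{eq: 4sklyanincoeffcond}, which excludes $0$ as well, but your write-up invokes only the $\pm 1$ exclusion.
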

\begin{proof}
Assume that $p= (p_0,p_1,p_2,p_3) \in E$. We will use \cite[Proposition 2.5]{smith1992regularity}, which shows that $E$ is defined by the following two equations in $\proj{k}{3}$:
\begin{equation}\label{eq: thcroreextn}
\begin{gathered}
y_0^2+y_1^2+y_2^2+y_3^2 = 0,\\
y_3^2+\left(\frac{1-\gamma}{1+\alpha}
\right)y_1^2+\left(\frac{1+\gamma}{1-\beta}\right) y_2^2 = 0.
\end{gathered}
\end{equation}

The action of $G$ on the point scheme of $A$ is described in \eqref{eq: Gactonpoints}. Since the action only ever changes the sign of a component, it is clear from \eqref{eq: thcroreextn} that if $p \in E$ then $p^g \in E$ for all $g \in G$.

Since $e_j \notin E$ for $j=0,1,2,3$ we can assume that at least two
coordinates of $p$ are non-zero. If there were two non-zero
entries, $p_l$ and $p_m$ say, then the equations defining $E$ would reduce to the form 
\begin{equation*}
p_l^2+p_m^2=p_l^2+ \lambda p_m^2=0, 
\end{equation*}
for some $\lambda \in k$. The only solution when $\lambda \neq 1$ is $p_l=p_m=0$, which results in a contradiction. If
$\lambda=1$ then either $\frac{1-\gamma}{1+\alpha}=1$ or $\frac{1+\gamma}{1-\beta}=1$ or
$\frac{1-\gamma}{1+\alpha}=\frac{1+\gamma}{1-\beta}$. 
In either case some simple manipulation shows that \eqref{eq: 4sklyanincoeffcond} is contradicted, which completes the proof.
\end{proof}

We will use $[p]$ to denote the $G$-orbit of a point $p \in \proj{k}{3}$; when necessary it will be made clear  whether this point lies on $E$ or in $\Gamma$. 
\begin{corollary}\label{cor: orbit4points}
For $p\in E$, $[p]$ contains precisely 4 points.
\end{corollary}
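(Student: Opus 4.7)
The plan is to exploit the explicit description of the $G$-action given in Lemma \ref{lem: actiononpoints} together with the coordinate count from Lemma \ref{lem: threegensannihilate}. Since $|G| = 4$, the orbit-stabiliser theorem immediately gives $|[p]| \leq 4$, and $|[p]|$ divides $4$, so it suffices to rule out the possibility that some non-identity element of $G$ stabilises $p$ (viewed as a point of $\proj{k}{3}$).

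Concretely, I would argue as follows. Fix $p = (p_0, p_1, p_2, p_3) \in E$ and suppose $p^g = p$ in $\proj{k}{3}$ for some $g \in G \setminus \{e\}$. By Lemma \ref{lem: actiononpoints} the action of each non-identity element of $G$ preserves $p_0$ and negates exactly two of the remaining coordinates. So $p^g = p$ in $\proj{k}{3}$ means $p^g = \lambda p$ in $k^4$ for some $\lambda \in k^\times$, and examining the coordinates forces $\lambda \in \{1, -1\}$. In either case this forces two of the coordinates of $p$ to vanish: for instance, if $g = g_1$ and $\lambda = 1$ we get $p_2 = p_3 = 0$, while $\lambda = -1$ gives $p_0 = p_1 = 0$; the cases $g = g_2$ and $g = g_1 g_2$ are analogous.

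However, Lemma \ref{lem: threegensannihilate} tells us that every $p \in E$ has at least three non-zero coordinates, so we cannot have two of them simultaneously zero. This is a contradiction, hence the stabiliser of $p$ is trivial and $|[p]| = 4$.

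There is no real obstacle here; the argument is a short consequence of the two preceding lemmas. The only thing to be careful about is distinguishing equality in $k^4$ from equality in $\proj{k}{3}$, which is why one must allow the scalar $\lambda$, but since the action only changes signs, the possible scalars reduce to $\pm 1$ and the argument is immediate.
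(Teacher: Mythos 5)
Your proof is correct and is essentially the same argument as the paper's: both reduce to observing that a coincidence $p^g = p^h$ (equivalently, a nontrivial stabiliser) would force two coordinates of $p$ to vanish, contradicting Lemma~\ref{lem: threegensannihilate}. The only cosmetic difference is that you phrase it via orbit--stabiliser and an explicit projective scalar $\lambda$, while the paper compares pairs of points in the orbit directly.
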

\begin{proof}
Suppose that $[p]$ contains fewer than 4 points. Then we must have $p^g=p^h$ for some distinct $g,h \in G$. Looking at any pair of the four points in \eqref{eq: Gactonpoints}, one can see that they have the same entry in precisely two coordinates, while the other two coordinates differ by a minus sign. If such a pair defined the same point then at least two of the entries would have to be zero. However, by Lemma \ref{lem: threegensannihilate} any point on $E$ must have at least 3 non-zero coordinates.
\end{proof}

The parameterisation described in the following result also has a geometric interpretation, as can be seen from
Proposition \ref{prop: fatpointsincohE}.
\begin{proposition}\label{prop: sklyaninfatpoints}
$A^{G,\mu}$ has a family of fat point modules of multiplicity 2 parameterised up to isomorphism by the $G$-orbits of $E$.
\end{proposition}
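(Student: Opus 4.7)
The plan is to assemble the proposition directly from results already proved. First, for each point $p \in E$, Lemma \ref{lem: threegensannihilate} guarantees that $p$ has at least three non-zero coordinates. This is precisely the hypothesis required to apply Proposition \ref{prop: fatpoints}, which immediately produces a fat point module $M_p^2$ of multiplicity 2 over $A^{G,\mu}$. Thus every $p \in E$ gives rise to such a fat point module, yielding the family whose existence is asserted.

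Second, I would address the parameterisation. By the second part of Lemma \ref{lem: threegensannihilate}, the $G$-action on the point scheme of $A$ (described in Lemma \ref{lem: actiononpoints}) restricts to an action on $E$, so the orbit space $E/G$ is well-defined; by Corollary \ref{cor: orbit4points} every such orbit has precisely $4$ points. Corollary \ref{cor: fatpointisoclasses} then says that, among the fat point modules constructed via Proposition \ref{prop: fatpoints}, the only graded isomorphisms are of the form $M_p^2 \cong M_{p^g}^2$ with $g \in G$. Consequently, distinct isomorphism classes of the fat point modules in our family correspond bijectively to the $G$-orbits of points of $E$, which is what was to be shown.

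In summary, the proof reduces to verifying two things: that the construction in Proposition \ref{prop: fatpoints} can be performed for every point of $E$, and that the isomorphism classification in Corollary \ref{cor: fatpointisoclasses} applies. Both of these are essentially immediate from Lemma \ref{lem: threegensannihilate}, which supplies the key geometric input (that no point on $E$ has more than one vanishing coordinate). There is no genuine obstacle here; the difficult work has already been carried out in establishing the fat point structure of $M_p^2$ and the uniqueness of its critical composition factors in the proof of Corollary \ref{cor: fatpointisoclasses}. The proposition is essentially a bookkeeping statement packaging those results in the language of the elliptic curve $E$.
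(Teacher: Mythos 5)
Your proof is correct and follows essentially the same route as the paper: invoke Lemma~\ref{lem: threegensannihilate} to get three non-zero coordinates and $G$-stability of $E$, then apply Proposition~\ref{prop: fatpoints} to construct the fat point modules and Corollary~\ref{cor: fatpointisoclasses} to identify the isomorphism classes with $G$-orbits. The appeal to Corollary~\ref{cor: orbit4points} is a harmless extra; the paper's proof does not use it.
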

\begin{proof}
Let $p \in E$. By Lemma \ref{lem: threegensannihilate} at least three coordinates of $p$ are non-zero and the action of
$G$ on the point scheme of $A$ preserves $E$. Thus one may apply Proposition \ref{prop: fatpoints} to obtain a fat point module
$M_p^2$ over $A^{G,\mu}$. By Corollary \ref{cor: fatpointisoclasses} the only isomorphisms between such modules in
$\text{grmod}(A^{G,\mu})$ are of the form $M_p^2 \cong M_{p^g}^2$ for $g \in G$. 
\end{proof}
\begin{rem}\label{rem: qgrisoslater}
The right $M_2(A)$-module $M^2_{e_{j}}$ is isomorphic to $\widetilde{M}_{e_{j}}^2$ upon restriction to $A^{G,\mu}$. 
\end{rem}

For all $p \in E$ define the right $A^{G,\mu}$-module $\widetilde{F}_{[p]}:= M_{p}^2$, which is well-defined by Proposition \ref{prop: sklyaninfatpoints}.

Let us now describe the orbits in $\Gamma$.
\begin{lemma}\label{lem: gammaorbits}
Consider $\Gamma' \cup \{e_0,e_1,e_2,e_3\} \subseteq \Gamma$. Under the action of $G$ on $\Gamma$, $\Gamma' \cup \{e_0,e_1,e_2,e_3\}$ decomposes as the union
of 8 $G$-orbits; four singleton orbits  
\begin{equation*}
[e_0],\;\;[e_1],\;\;[e_2],\;\;[e_3],
\end{equation*}
and four orbits of order 4,
\begin{align*}
o_{e} &= \left[(1,i,i,1)\right], & o_{g_1} &= \left[\left(1,(\beta \gamma)^{-\frac{1}{2}}, -\gamma^{-\frac{1}{2}},\beta^{-\frac{1}{2}}
\right)\right],\\ 
o_{g_2} &= \left[\left(1, i\gamma^{-\frac{1}{2}}, (\alpha \gamma)^{-\frac{1}{2}}, i \alpha^{-\frac{1}{2}}
\right)\right], & o_{g_1g_2} &= \left[\left(1, \beta^{-\frac{1}{2}}, i \alpha^{-\frac{1}{2}},i (\alpha \beta)^{-\frac{1}{2}} \right)\right].
\end{align*}
Furthermore, if $[p]$ is an order 4 orbit then there exists $h \in G$ such that $(p^g)^{\phi}=(p^{g})^h$ for all $g \in G$. That is, the restriction of $\phi$ to each orbit of order 4 coincides with the action of a particular element of $G$. 
\end{lemma}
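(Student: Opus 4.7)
The plan is to proceed by direct verification, exploiting the explicit nature of both the $G$-action on $\proj{k}{3}$ given in \eqref{eq: Gactonpoints} and the explicit pairs $(p, p^\phi)$ listed in Lemma \ref{lem: ptschemecontains}. The key observation underpinning the orbit analysis is that each non-trivial $g \in G$ acts by negating a distinct pair of coordinates; thus for $p \in \proj{k}{3}$ to be fixed by $g$, either the two negated coordinates of $p$ must both vanish, or (using the projective identification $p \sim -p$) the other two coordinates must both vanish. In either case at least two coordinates of $p$ are zero.

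First I would dispose of the singleton orbits: each $e_j$ has three zero coordinates, so the observation above shows it is fixed by every element of $G$, giving the four singletons $[e_j]$. For the sixteen points of $\Gamma'$, inspection of \eqref{eq: 4sklyanintwistpts} shows that every such point has all four coordinates non-zero, so no non-trivial element of $G$ fixes any of them, and each orbit therefore has size exactly $4$. Picking the representative listed in the statement for each of the four pairs of lines in \eqref{eq: 4sklyanintwistpts} and applying \eqref{eq: Gactonpoints} coordinate-wise then produces four disjoint orbits $o_e, o_{g_1}, o_{g_2}, o_{g_1 g_2}$; since $|\Gamma'|=16$, the enumeration is complete.

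For the final claim I would treat each order-$4$ orbit in turn. For the chosen representative $p$ I would read off $p^\phi$ from the pairing recorded in Lemma \ref{lem: ptschemecontains} and compare it with the four sign patterns $p^e, p^{g_1}, p^{g_2}, p^{g_1 g_2}$ produced by \eqref{eq: Gactonpoints}; this singles out a unique $h \in G$ with $p^\phi = p^h$, and the value of $h$ will coincide with the subscript labelling the orbit (which is the reason for the naming convention). To promote the identity $p^\phi = p^h$ from the representative to the whole orbit, I would invoke Lemma \ref{lem: phiaut}: since $\phi$ is an involution, the four orbit points split into two $\phi$-pairs, so checking one point from each pair suffices.

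The main obstacle here is bookkeeping rather than any substantive difficulty: the four orbits each require comparing sign patterns across four coordinates, and one has to interpret the pairing convention of $\pm$ with $\mp$ in \eqref{eq: 4sklyanintwistpts} consistently throughout. Once that convention is fixed, each of the four verifications reduces to a one-line comparison of sign patterns.
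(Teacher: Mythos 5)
Your proposal is correct and follows essentially the same route as the paper: both rely on direct verification using the explicit formula \eqref{eq: Gactonpoints} for the $G$-action and the explicit $\phi$-pairings recorded in \eqref{eq: 4sklyanintwistpts}. Your write-up is considerably more detailed than the paper's terse one-paragraph proof, and the preliminary observation (that a non-trivial $g\in G$ can only fix a point with at least two vanishing coordinates, forcing the $\Gamma'$-orbits to have full size $4$) is a clean way of organising the orbit count, but it is not a different method.
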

\begin{proof}
One can use \eqref{eq: Gactonpoints} to verify that the orbits under the action of $G$ are as stated above. The order 4 orbits are labelled such that the restriction of $\phi$ to the orbit $o_h$ coincides with the action of $h$. For example, $[(1,i,i,1)]$ is fixed pointwise by $\phi$ and therefore $o_e = [(1,i,i,1)]$. It is easy to check that the group elements corresponding to the other orbits are as stated. 
\end{proof}

In \cite{smith1993irreducible} Smith and Staniszkis classify fat point modules of all multiplicities over $A$ when $|\sigma| = \infty$. As the final remark in \S 4 op. cit. states, their work shows that there are four non-isomorphic fat
point modules of each multiplicity $e \geq 2$. These modules can be labelled
$F(\omega^{\sigma^{e-1}})$ for some 2-torsion point $\omega \in E$ in a natural way. 

\begin{proposition}\label{prop: sklyaninfatpts}
Consider $A$ as the invariant subring $M_2(A^{G,\mu})^G$. Four isomorphism classes of fat point modules of multiplicity
2 over $A$ arise as the restriction of modules of the form $\widetilde{M}_p^2$, where $\widetilde{M}_p$ is a point
module over $A^{G,\mu}$. When $|\sigma|=\infty$ one recovers in this manner all four fat point modules of multiplicity 2
over $A$.
\end{proposition}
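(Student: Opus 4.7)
The proof proceeds by exploiting the self-duality of the twisting construction. Since $A^{G,\mu}$ carries the same $G$-grading as $A$ (twisting does not alter the underlying $G$-graded vector space) and since the cocycle $\mu$ takes only the values $\pm 1$, performing the cocycle twist a second time on $A^{G,\mu}$ recovers $A$ canonically. In particular, $A$ embeds in $M_2(A^{G,\mu})$ via the analogue of Lemma \ref{lem: matrixgens}, and every result from \S\ref{sec: modules} applies with the roles of $A$ and $A^{G,\mu}$ interchanged. By the dual form of Proposition \ref{prop: fatpoints}, whenever $p \in \Gamma$ has at least three non-zero coordinates, the $M_2(A^{G,\mu})$-module $\widetilde{M}_p^2$ restricts to a fat point module of multiplicity 2 over $A$.

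The next step is to identify which points of $\Gamma$ are admissible. By Lemma \ref{lem: gammaorbits}, the 20 points of $\Gamma$ split under $G$ into the four singletons $\{e_0\},\{e_1\},\{e_2\},\{e_3\}$, each of which has only one non-zero coordinate and is therefore excluded, and the four size-four orbits $o_e, o_{g_1}, o_{g_2}, o_{g_1g_2}$, whose representatives given in \eqref{eq: 4sklyanintwistpts} all have every coordinate non-zero. From each of these four orbits I would choose a representative $p$ and form $\widetilde{M}_p^2$, producing four candidate fat point modules of multiplicity 2 over $A$.

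It remains to show these four candidates are pairwise non-isomorphic and that they exhaust all such modules. For non-isomorphism, the dual form of Corollary \ref{cor: fatpointisoclasses} already gives $\widetilde{M}_p^2 \cong \widetilde{M}_{p^g}^2$ for all $g \in G$, and conversely any isomorphism $\widetilde{M}_p^2 \cong \widetilde{M}_q^2$ of $A$-modules would, upon restricting $(\widetilde{M}_p^2)^2$ further to $A^{G,\mu}$ via the dual form of Proposition \ref{prop: fatpointsotherwaygen}, yield an $A^{G,\mu}$-module isomorphism $\bigoplus_{g \in G} \widetilde{M}_{p^g} \cong \bigoplus_{g \in G} \widetilde{M}_{q^g}$; uniqueness of critical composition factors up to shift and permutation, via \cite[Proposition 1.5]{smith1992the}, then forces $[p]=[q]$. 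To finish, I invoke Smith and Staniszkis \cite{smith1993irreducible}, which asserts that $A$ has exactly four isomorphism classes of fat point modules of multiplicity 2 when $|\sigma|=\infty$; since the construction already produces four distinct ones, they must all arise this way. The main subtlety throughout is verifying that the machinery of \S\ref{sec: modules} genuinely transfers to the reversed setting, which reduces to the fact (recorded just before Lemma \ref{lem: threegensannihilate}) that $A^{G,\mu}$ satisfies Hypotheses \ref{hyp: genhypforfatpts} and carries the appropriate $G$-action.
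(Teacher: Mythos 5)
Your proposal is correct and follows essentially the same route as the paper's proof: both use the self-duality of the cocycle twist (since $A^{G,\mu}$ carries the same $G$-grading and $\mu$ takes values $\pm 1$) to apply the machinery of \S\ref{sec: modules} with the roles of $A$ and $A^{G,\mu}$ interchanged, identify the 16 admissible points of $\Gamma'$ (with the $e_j$ excluded), invoke Lemma \ref{lem: gammaorbits} and Corollary \ref{cor: fatpointisoclasses} to conclude there are exactly four isomorphism classes, and finish by quoting Smith--Staniszkis. Your version merely unwinds the argument of Corollary \ref{cor: fatpointisoclasses} explicitly (via the dual of Proposition \ref{prop: fatpointsotherwaygen} and \cite[Proposition 1.5]{smith1992the}) rather than citing it in a single stroke, which is a legitimate and slightly more transparent presentation of the same argument.
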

\begin{proof}
First, note that $A^{G,\mu}$ satisfies Hypotheses \ref{hyp: genhypforfatpts}. Consider the 16 points in $\Gamma'$; by our assumption on parameters from \eqref{eq: 4sklyanincoeffcond}, each of these points has at least three non-zero coordinates. Thus by Proposition \ref{prop: fatpoints} one can
construct 16 fat point modules of multiplicity 2 over $A$, of the form $\widetilde{M}_p^2$ for $p \in \Gamma$. 

The 16 points we are considering are partitioned into the four $G$-orbits described in Lemma \ref{lem: gammaorbits}. Thus by
Corollary \ref{cor: fatpointisoclasses} there are precisely four isomorphism classes of such fat point modules. When
$|\sigma|=\infty$, the work in \cite{smith1993irreducible} shows that there are precisely four isomorphism classes of
fat point modules of multiplicity 2. Thus, under that hypothesis we recover each of these isomorphism classes.
\end{proof}
\begin{rem}\label{rem: degenfatptno}
The right $M_2(A^{G,\mu})$-module $\widetilde{M}^2_{e_{j}}$ becomes isomorphic to the direct sum $M_{e_{j}}^2$ upon restriction to $A$. In this respect $A^{G,\mu}$ behaves like $A$ (see Remark \ref{rem: qgrisoslater}).
\end{rem}

Proposition \ref{prop: sklyaninfatpts} allows us to associate a 2-torsion point
$\omega_{[p]} \in E$ to each $p \in \Gamma'$. Thus, let $F(\omega_{[p]}^{\sigma})$ denote the fat point module over $A$ which is isomorphic to the restriction of the $M_2(A^{G,\mu})$-module
$\widetilde{M}_{p^{g}}^2$ for all $g \in G$.

We have seen in Propositions \ref{prop: sklyaninfatpoints} and \ref{prop: sklyaninfatpts} that by taking direct sums of point modules over $A$ and $A^{G,\mu}$ we can obtain fat point modules of multiplicity 2 over the other algebra. The next result shows that one can repeat this process to recover point modules.
\begin{proposition}\label{prop: fatpointsotherway}
Assume that $|\sigma| = \infty$.
\begin{itemize}
\item[(i)] Let $p \in E$ and $\omega \in E$ be a 2-torsion point. On restriction to a module over $A=M_2(A^{G,\mu})^G$, one has
$(\widetilde{F}_{[p]})^2 \cong \bigoplus_{g \in G}M_{p^{g}}$. 

\item[(ii)] Now let $p \in \Gamma'$. The $M_2(A)$-module $F(\omega_{[p]}^{\sigma})^2$ is isomorphic upon restriction to
$A^{G,\mu}$ to the direct sum $\bigoplus_{g \in G}\widetilde{M}_{p^{g}}$. In this manner one recovers all 16 point
modules of $A^{G,\mu}$ associated to points in $\Gamma'$.
\end{itemize}
\end{proposition}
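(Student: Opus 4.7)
The plan is to exploit the symmetry between $A$ and $A^{G,\mu}$ inherent in the cocycle twist construction. Since $A^{G,\mu}$ carries the same underlying $G$-grading as $A$ and the cocycle $\mu$ takes values only in $\{\pm 1\}$, one has $(A^{G,\mu})^{G,\mu} = A$. Thus every argument built around Proposition \ref{prop: fatpointsotherwaygen} can be re-run with the roles of $A$ and $A^{G,\mu}$ interchanged, producing an embedding of $A$ inside $M_2(A^{G,\mu})$ that is entirely analogous to the one in Lemma \ref{lem: matrixgens}.

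For part (i), $\widetilde{F}_{[p]}$ is by definition the restriction of the $M_2(A)$-module $M_p^2$ to $A^{G,\mu}$. Doubling therefore gives $(\widetilde{F}_{[p]})^2 \cong M_p^4$ as an $\N$-graded right $M_2(A^{G,\mu})$-module. This is precisely the hypothesis of Proposition \ref{prop: fatpointsotherwaygen}, so restricting further down to $A$ yields the stated decomposition $\bigoplus_{g\in G} M_{p^g}$.

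For part (ii), Proposition \ref{prop: sklyaninfatpts} identifies $F(\omega_{[p]}^{\sigma})$ with $\widetilde{M}_p^2$ viewed as an $A$-module. Hence $F(\omega_{[p]}^{\sigma})^2 \cong \widetilde{M}_p^4$ is naturally an $\N$-graded right $M_2(A)$-module. Applying the dualised form of Proposition \ref{prop: fatpointsotherwaygen}---with $A^{G,\mu}$ now playing the role of the base algebra and $A$ the role of the twist---the restriction of $\widetilde{M}_p^4$ to $A^{G,\mu}$ decomposes as $\bigoplus_{g \in G} \widetilde{M}_{p^g}$, exactly as in \eqref{eq: decompasptmods}. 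For the recovery claim, invoke Lemma \ref{lem: gammaorbits}: the 16 points of $\Gamma'$ partition into four $G$-orbits of size 4, and by Proposition \ref{prop: sklyaninfatpts} these orbits are in bijection with the four fat point modules $F(\omega^{\sigma})$ of multiplicity 2 over $A$. Running the above construction once for each of the four fat point modules therefore produces $4 \times 4 = 16$ point modules over $A^{G,\mu}$, accounting for every point of $\Gamma'$.

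The main obstacle is making the dualisation step rigorous. One must verify that, upon the reverse matrix embedding $A \hookrightarrow M_2(A^{G,\mu})$, the explicit $4 \times 4$ action used in the proof of Proposition \ref{prop: fatpointsotherwaygen}---the matrices displayed after \eqref{eq: forbitdecomp}---has a faithful counterpart with $v_i$ replaced by $x_i$ and vice versa, so that the cyclic submodules generated by $(\widetilde{m}_0, 0, 0, \pm \widetilde{m}_0)$ and $(0, \widetilde{m}_0, \pm \widetilde{m}_0, 0)$ are isomorphic to $\widetilde{M}_{p^g}$ for the appropriate $g\in G$. Given that $\mu^2 \equiv 1$ and the $G$-grading is preserved, this is a direct symmetric translation of the computation already carried out, but it is the only point where care is required.
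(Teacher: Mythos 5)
Your proof is correct and follows essentially the same route as the paper's, which simply invokes Proposition~\ref{prop: fatpointsotherwaygen} directly for both parts and then elaborates on the recovery claim in (ii) via Proposition~\ref{prop: sklyaninfatpts} and the orbit count from Lemma~\ref{lem: gammaorbits}. The only difference is that you make the duality step explicit---i.e.\ that $(A^{G,\mu})^{G,\mu}=A$ because $\mu^2\equiv 1$, so Proposition~\ref{prop: fatpointsotherwaygen} applies with the roles of $A$ and $A^{G,\mu}$ interchanged for part~(ii)---whereas the paper treats this as understood from the discussion preceding Proposition~\ref{prop: fatpointsotherwaygen}; spelling it out as you do is a reasonable and faithful expansion rather than a different argument.
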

\begin{proof}
One can prove both (i) and (ii) by a direct application of Proposition \ref{prop: fatpointsotherwaygen}. We elaborate
the argument with respect to the final statement of (ii). This holds because each $G$-orbit $[p] \subset \Gamma'$
corresponds uniquely to a fat point module $F(\omega_{[p]}^{\sigma})^2$ over $A$ by Proposition \ref{prop:
sklyaninfatpts}. Proposition \ref{prop: fatpointsotherwaygen} then implies that taking a direct sum of this fat point
module and restricting returns the $A^{G,\mu}$-module $\bigoplus_{g \in G}\widetilde{M}_{p^{g}}$.
\end{proof}

We are now ready to prove the main result of this section, namely that the point scheme of $A^{G,\mu}$ consists of precisely 20 points.
\begin{theorem}\label{thm: finitepointscheme}
If $|\sigma|= \infty$ then $\Gamma = \Gamma' \cup \{e_0,e_1,e_2,e_3\}$. 
\end{theorem}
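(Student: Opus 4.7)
The plan is to combine the lower bound $\Gamma' \cup \{e_0,e_1,e_2,e_3\} \subseteq \Gamma$ from Lemma \ref{lem: ptschemecontains} with the ``duality'' between $A$ and $A^{G,\mu}$ highlighted just before Proposition \ref{prop: fatpointsotherwaygen}: because the $G$-grading on $A^{G,\mu}$ is inherited from $A$ and $\mu$ takes values in $\{\pm 1\}$, applying the construction $B \mapsto B^{G,\mu}$ again to $A^{G,\mu}$ recovers $A$. Consequently every statement from \S\ref{sec: modules} may be read with the roles of $A$ and $A^{G,\mu}$ interchanged; Hypotheses \ref{hyp: genhypforfatpts} for $A^{G,\mu}$ are already verified via Theorem \ref{theorem: 4sklytwistprops} in the paragraph preceding Lemma \ref{lem: threegensannihilate}.

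To prove the reverse inclusion, pick $p \in \Gamma$ and split into cases according to the number of nonzero coordinates of $p$. If $p$ has exactly one nonzero coordinate then $p \in \{e_0,e_1,e_2,e_3\}$. Next, assume $p$ has exactly two nonzero coordinates; one then substitutes $p$ into the multilinearizations $m_1,\ldots,m_6$ of Lemma \ref{lem: multilins} and checks, for each of the six possible index pairs, that the resulting linear system in $q = p^{\phi}$ forces every coordinate of $q$ to vanish. The parameter conditions $\{\alpha,\beta,\gamma\} \cap \{0,\pm 1\} = \emptyset$ from \eqref{eq: 4sklyanincoeffcond} (and $\mathrm{char}(k) \neq 2$) are exactly what is needed to kill the mixed coordinates $q_2,q_3$ or $q_0,q_1$ after combining the corresponding pair $m_{2i-1}, m_{2i}$. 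Since $q$ must represent a projective point, this rules out all such $p$ from $\Gamma$.

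The remaining case is that $p$ has at least three nonzero coordinates. Applying the swapped version of Proposition \ref{prop: fatpoints} to $\widetilde{M}_p$ shows that $\widetilde{M}_p^2$ is a fat point module of multiplicity $2$ over $A$. By the Smith--Staniszkis classification invoked in Proposition \ref{prop: sklyaninfatpts}, together with $|\sigma| = \infty$, there are only four isomorphism classes of fat point modules of multiplicity $2$ over $A$, and by that proposition each is realised as the restriction of $\widetilde{M}_q^2$ for some $q \in \Gamma'$. Hence $\widetilde{M}_p^2 \cong \widetilde{M}_q^2$ in $\text{grmod}(A)$ for some $q \in \Gamma'$. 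The swapped form of Corollary \ref{cor: fatpointisoclasses} now identifies such isomorphisms exclusively with $G$-translations, so $p = q^g$ for some $g \in G$; since $\Gamma'$ is $G$-stable by the orbit decomposition in Lemma \ref{lem: gammaorbits}, we conclude $p \in \Gamma'$.

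I expect the main obstacle to be bookkeeping rather than substance: one must be careful that Proposition \ref{prop: fatpoints}, Proposition \ref{prop: fatpointsotherwaygen} and Corollary \ref{cor: fatpointisoclasses} really do transfer verbatim with the roles of $A$ and $A^{G,\mu}$ swapped, which rests on the $\mu^2 \equiv 1$ duality and on the verification of Hypotheses \ref{hyp: genhypforfatpts} for $A^{G,\mu}$. Once that symmetry is in hand, the chain ``point module over $A^{G,\mu}$ $\Rightarrow$ fat point module of multiplicity $2$ over $A$ $\Rightarrow$ Smith--Staniszkis count'' pins down all $p \in \Gamma$ with three or four nonzero coordinates, while the fewer-coordinate cases reduce to the short direct verification sketched above.
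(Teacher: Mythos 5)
Your proposal is correct and follows essentially the same approach as the paper: handle the $\leq 2$ nonzero-coordinate case via the multilinearizations (the paper phrases this as a rank-$\geq 4$ condition on the matrix in \eqref{eq: 4sklyaninmatrixform}, which is the same linear-system observation), and in the $\geq 3$ nonzero-coordinate case use the $A \leftrightarrow A^{G,\mu}$ duality together with the Smith--Staniszkis count to identify $\widetilde{M}_p^2$ with some $\widetilde{M}_q^2$, $q \in \Gamma'$. The only cosmetic difference is that you invoke the swapped form of Corollary \ref{cor: fatpointisoclasses} to conclude $p \in [q]$, whereas the paper re-derives that corollary's argument in place via Proposition \ref{prop: fatpointsotherwaygen}, \cite[Proposition 1.5]{smith1992the}, and the fact that $\Gamma$ parameterises point modules.
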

\begin{proof}
Suppose that $\widetilde{M}_p$ is a point module over $A^{G,\mu}$, where $p \in \proj{k}{3}$. We claim that if $p \neq
e_j$ then at least three coordinates of $p$ are non-zero. To see this, consider the following manner of expressing the
multilinearisations in \eqref{eqn: multilins}:
\begin{equation}\label{eq: 4sklyaninmatrixform}
\begin{pmatrix} 
-v_{11} & v_{01} & \alpha v_{31} & -\alpha v_{21} \\
v_{11} & v_{01} & -v_{31} & -v_{21} \\ 
-v_{21} & -\beta v_{31} & v_{01} & \beta v_{11} \\
v_{21} & -v_{31} & v_{01} & -v_{11} \\
-v_{31} & -\gamma v_{21} & \gamma v_{11} & v_{01} \\
v_{31} & v_{21} & v_{11} & v_{01}  \end{pmatrix} \cdot \begin{pmatrix} v_{02} \\ v_{12} \\ v_{22} \\ v_{32}
\end{pmatrix} =0.
\end{equation}
By assumption $(p,p^{\phi}) \in \Gamma_2$. Since $\phi$
is an automorphism this is the unique point $q \in \Gamma_2$ for which $p=\pi_1(q)$. In particular, this implies that
when the coordinates of $p$ are substituted into the left-hand matrix in \eqref{eq: 4sklyaninmatrixform}, that matrix
must have rank less than or equal to 3. 

Suppose, therefore, that two coordinates of $p$ are zero. We may assume that the remaining two coordinates are non-zero, otherwise $p$ would be one of the four points of the form $e_j$. As $\{\alpha,\beta, \gamma\} \cap \{0,\pm 1\}=\emptyset$ the matrix in \eqref{eq: 4sklyaninmatrixform} has rank at least 4, regardless of which two coordinates of $p$ are zero. Thus we may assume that at least three coordinates of $p$ are non-zero.

By Proposition \ref{prop: fatpoints} $\widetilde{M}_p^2$ is a fat point
module of multiplicity 2 over $A$. But when $|\sigma|=\infty$ there are only four such modules up to isomorphism, hence we know that $\widetilde{M}_p^2$ must be isomorphic to one of them. Proposition \ref{prop: fatpointsotherway} implies
that there
exists some point $z \in \Gamma'$ for which $\widetilde{M}_p^2 \cong F(\omega_{[z]}^{\sigma})$ as right $A$-modules.

Applying Proposition \ref{prop: fatpointsotherwaygen} to $\widetilde{M}_p^2$ produces the right $A^{G,\mu}$-module
$\bigoplus_{g \in G}\widetilde{M}_{p^{g}}$. Using the decomposition of the right $A^{G,\mu}$-module
$(F(\omega_{[z]}^{\sigma}))^2$ from Proposition \ref{prop: fatpointsotherway}, we must have an isomorphism
\begin{equation}\label{eq: dualdecomp}
\bigoplus_{g \in G}\widetilde{M}_{p^{g}} \cong \bigoplus_{g \in G}\widetilde{M}_{z^{g}},
\end{equation}
of right $A^{G,\mu}$-modules. But by \cite[Proposition 1.5]{smith1992the} the factors in a critical composition series
of a f.g.\ $\N$-graded module of GK dimension 1 are unique up to permutation and isomorphism in high degree. That result applies to the modules in \eqref{eq: dualdecomp}, hence we may conclude that $\pi(\widetilde{M}_{p}) \cong
\pi(\widetilde{M}_{z^{g}})$ in $\text{qgr}(A^{G,\mu})$ for some $g \in G$. This implies the existence of $n \in \N$ such
that $\widetilde{M}_{p^{\phi^{n}}} \cong \widetilde{M}_{\left(z^{g}\right)^{\phi^{n}}}$ in $\text{grmod}(A^{G,\mu})$. Since
$\Gamma$ parameterises isomorphism classes of point modules over $A^{G,\mu}$ and $\phi$ is an automorphism, we may
conclude that $p= z^g$.
\end{proof}

\section{Twists of a factor ring}\label{sec: thcr}
In this section we will study a cocycle twist of an important factor ring of the 4-dimensional Sklyanin algebra $A=A(\alpha,\beta,\gamma)$. Let us begin by describing the main object of study. By \cite[Corollary 3.9]{smith1992regularity} $A$ contains two central elements of degree 2, with such elements having the form
\begin{equation}\label{eq: 4sklyanincentre}
 \Omega_1:=-x_0^2+x_1^2+x_2^2+x_3^2,\;\; \Omega_2:=x_1^2+\left(\frac{1+\alpha}{1-\beta}
\right)x_2^2+\left(\frac{1-\alpha}{1+\gamma} \right)x_3^2.
\end{equation}

Recall from \S\ref{subsec: 4dimsklyanin} that associated to $A$ there is an elliptic curve $E$ and automorphism $\sigma$ of $E$ given by translation by a point. Corollary 3.9 from \cite{smith1992regularity} shows that there is an isomorphism 
\begin{equation*}
\frac{A}{(\Omega_1,\Omega_2)} \cong B(E,\calL,\sigma),
\end{equation*}
where the right-hand side is a twisted homogeneous coordinate ring and $\calL:=\calO_E(1)$ a very ample invertible sheaf with 4 global sections that gives an embedding $E \hookrightarrow \proj{k}{3}$. The ring $B(E,\calL,\sigma)$ will be denoted by $B$ whenever there is no ambiguity in doing so.

The twisting operation that we applied to $A$ in \S\ref{sec: twist4dimsklyanin} restricts to the factor ring $B$, whence we obtain a cocyle twist $B^{G, \mu}$. We will show that this is a prime ring but not a domain (see Corollary \ref{cor: actuallyprime} and Remark \ref{rem: bgmunilp} respectively). However, $B^{G, \mu}$ can still be described as a suitable generalisation of a twisted homogeneous coordinate ring (see \S\ref{subsubsec: geomdescrthcrtwist} for definitions and the formal result). This means that, like $B$, its structure is strongly controlled by geometric data, although for $B^{G, \mu}$ this involves another elliptic curve $E^G$. This is the curve determined by the orbit space of $E$ under a natural action of $G$. The geometric description allows us to ascertain many of the properties of $B^{G,\mu}$.

Throughout this section we will continue assuming that Hypothesis \ref{hypsing: siginforder} holds, thus the automorphism $\sigma$ has infinite order. Since $\sigma$ is given by a translation by a point, under this assumption there are no points on $E$ fixed by $\sigma$. 

\subsection{Properties of the twist}\label{subsec: propthcrtwist}
We begin this section by showing that the construction of $B^{G,\mu}$ described in the introduction to \S\ref{sec: thcr} is valid, followed by giving some of its elementary properties.
\begin{lemma}\label{lem: twistthcr}
The degree 2 elements
\begin{equation}\label{eq: 4sklyanintwistcentre}
\Theta_1:=-v_0^2+v_1^2+v_2^2-v_3^2,\;\; \Theta_2:=v_1^2+\left(\frac{1+\alpha}{1-\beta}
\right)v_2^2-\left(\frac{1-\alpha}{1+\gamma} \right)v_3^2,
\end{equation}
in $A^{G,\mu}$ are central and form a regular sequence. Moreover, there is an isomorphism of $k$-algebras
\begin{equation}\label{eq: isofactors}
B^{G,\mu}:=B(E,\calL,\sigma)^{G,\mu}=\left(\frac{A(\alpha,
\beta,\gamma)}{(\Omega_1,\Omega_2)} \right)^{G,\mu} \cong \frac{A(\alpha, \beta,\gamma)^{G,\mu}}{(\Theta_1,\Theta_2)}.
\end{equation}
In particular, $B^{G,\mu}$ is generated in degree 1.
\end{lemma}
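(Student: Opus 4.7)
The plan is to exploit the identification of $A$ and $A^{G,\mu}$ as $G$-graded vector spaces, under which $v_i$ and $x_i$ denote the same vector, and to recognise $\Theta_1, \Theta_2$ as equal to $\Omega_1, \Omega_2$ under this identification. First I would compute $v_i \ast_{\mu} v_i = \mu(g_i,g_i)\, x_i^2$ for each generator, using the $G$-grading \eqref{eq: 4sklyaningrading} and the cocycle \eqref{eq: 2cocycle}. A short calculation gives $\mu(g,g) = 1$ for $g \in \{e, g_1, g_2\}$ while $\mu(g_1 g_2, g_1 g_2) = -1$, so $v_j^2 = x_j^2$ for $j = 0,1,2$ and $v_3^2 = -x_3^2$. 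Substituting into \eqref{eq: 4sklyanintwistcentre} and comparing with \eqref{eq: 4sklyanincentre}, the sign flips in the coefficients of $v_3^2$ in $\Theta_1, \Theta_2$ are precisely compensated by the sign in $v_3^2 = -x_3^2$, so that $\Theta_i = \Omega_i$ as elements of the common $G$-graded vector space.

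Next, because $\Omega_1, \Omega_2 \in A_e$ and the cocycle satisfies $\mu(e,g) = \mu(g,e) = 1$ for all $g \in G$, the twisted product with $\Theta_i$ coincides with the original product with $\Omega_i$: explicitly, $\Theta_i \ast_\mu a = \Omega_i a$ and $a \ast_\mu \Theta_i = a \Omega_i$ in $A$ for every homogeneous $a$. Centrality of $\Omega_i$ in $A$ therefore transfers immediately to centrality of $\Theta_i$ in $A^{G,\mu}$.

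For the isomorphism \eqref{eq: isofactors}, I would note that the two-sided ideal $I = (\Omega_1, \Omega_2)$ of $A$ is $G$-graded because its generators lie in $A_e$. The same argument used for centrality shows that the vector subspace $I \subset A^{G,\mu}$ is precisely the two-sided ideal generated by $\Theta_1, \Theta_2$ there. Since $I$ is $G$-graded, the $G$-grading descends to $B$ and the twisted multiplication $\ast_\mu$ on $A^{G,\mu}$ descends to the quotient; this descended multiplication visibly agrees with the $\mu$-twist of the $G$-grading that $B$ inherits from $A$, yielding the desired isomorphism.

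Finally, $\Omega_1, \Omega_2$ form a regular sequence in $A$ by the Hilbert-series identity $H_A(t)(1-t^2)^2 = (1+t)^2/(1-t)^2 = H_B(t)$, together with the fact that $A$ is a domain and each $\Omega_i$ is central of degree $2$. Because cocycle twisting preserves Hilbert series (Theorem \ref{theorem: 4sklytwistprops}(ii)), the algebras $A^{G,\mu}$ and $B^{G,\mu}$ have the same Hilbert series as $A$ and $B$, and the identical Hilbert-series relation forces $\Theta_1, \Theta_2$ to be regular in $A^{G,\mu}$. Degree-$1$ generation of $B^{G,\mu}$ then follows from that of $A^{G,\mu}$ (Theorem \ref{theorem: 4sklytwistprops}) via the surjection in \eqref{eq: isofactors}. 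The only real subtlety throughout is careful bookkeeping of the twisted multiplication, and this is substantially eased by the fact that $\Omega_1, \Omega_2$ lie in the trivial $G$-component, where $\mu$ acts trivially.
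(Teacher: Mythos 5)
Your proof is correct, but it takes a genuinely more elementary route than the paper. The paper does not verify the identification $\Theta_i = \Omega_i$ explicitly; instead it notes $\Omega_1, \Omega_2 \in A_2 \cap A_e$ and then outsources the transfer of centrality and regularity to \cite[Lemma~4.1]{davies2014cocycle1} and the isomorphism of quotients to \cite[Lemma~4.2]{davies2014cocycle1}, with regularity of $\Omega_1, \Omega_2$ in $A$ itself coming from \cite[Theorem~5.4]{smith1992regularity}. You instead do all the bookkeeping by hand: the computation $\mu(g_1g_2,g_1g_2)=-1$ while $\mu(g,g)=1$ otherwise, which shows $v_3^2 = -x_3^2$ and explains the sign flips in \eqref{eq: 4sklyanintwistcentre}, is a useful explication that the paper elides; and your observation that the normalisation $\mu(e,\,\cdot\,)=\mu(\,\cdot\,,e)=1$ makes the twisted and untwisted ideals coincide as subsets is exactly the content hidden inside the cited lemmas. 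For the regular sequence you replace the citation with a Hilbert-series argument, which works but should be stated a touch more carefully: for the \emph{first} element $\Theta_1$, the Hilbert-series equality alone does not rule out torsion (the kernel contribution $t^2 H_{K_1}(t)(1-t^2)$ need not be coefficient-wise nonnegative), so you must invoke that $A^{G,\mu}$ is a domain --- which you do for $A$, but for $A^{G,\mu}$ you should explicitly cite Theorem~\ref{theorem: 4sklytwistprops}(iii). Once $\Theta_1$ is regular, the Hilbert-series identity does force regularity of $\Theta_2$ on $A^{G,\mu}/\Theta_1 A^{G,\mu}$, so the rest of your argument is sound. Net effect: your proof is self-contained and a reader unfamiliar with \cite{davies2014cocycle1} can follow it, whereas the paper's proof is shorter but opaque without that reference.
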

\begin{proof}
Consider the grading defined on $A$ by \eqref{eq: 4sklyaningrading}. Since $\Omega_1, \Omega_2 \in A_2 \cap A_e$ the cocycle twist performed in the previous section can also be applied to $B$. By \cite[Theorem 5.4]{smith1992regularity}, $\Omega_1$ and $\Omega_2$ form a regular sequence of central elements in $A$. Applying \cite[Lemma 4.1]{davies2014cocycle1} to successive factor rings shows that this remains true for $\Theta_1$ and $\Theta_2$ in $A^{G,\mu}$. Now apply \cite[Lemma 4.2]{davies2014cocycle1} to $A(\alpha, \beta,\gamma)$ and the ideal $(\Omega_1,\Omega_2)$, which proves that the isomorphism in \eqref{eq: isofactors} holds. 
\end{proof}

We will now prove that the twist has several further properties. Note that the result holds even when Hypothesis \ref{hypsing: siginforder} is not assumed.
\begin{theorem}\label{thm: thcrtwistprops}
$B^{G,\mu}$ has the following properties:
\begin{itemize}
 \item[(i)] it is strongly noetherian;
 \item[(ii)] it has GK dimension 2;
 \item[(iii)] it is Auslander-Gorenstein of dimension 2;
 \item[(iv)] it satisfies the Cohen-Macaulay property;
 \item[(v)] it is Koszul.
\end{itemize}
\end{theorem}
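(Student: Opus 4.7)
The plan is to mirror the strategy of Theorem \ref{theorem: 4sklytwistprops}: establish each property for the untwisted algebra $B = B(E,\calL,\sigma)$ and then invoke the preservation results from \cite{davies2014cocycle1}. Lemma \ref{lem: twistthcr} already guarantees that the cocycle twist of $B$ is well-defined, since the $G$-grading on $A(\alpha,\beta,\gamma)$ descends to the factor $B$ (the central elements $\Omega_1, \Omega_2$ lie in the identity component, and the ideal they generate is $G$-stable). An alternative route, which I will use as a cross-check, is to exploit the presentation $B^{G,\mu} \cong A^{G,\mu}/(\Theta_1,\Theta_2)$ together with the fact that $\Theta_1, \Theta_2$ is a regular central sequence in $A^{G,\mu}$ (Lemma \ref{lem: twistthcr}), and transfer each property from $A^{G,\mu}$ (which has them by Theorem \ref{theorem: 4sklytwistprops}) through the quotient.

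First I would verify the properties for $B$ itself. Being a twisted homogeneous coordinate ring over the smooth projective curve $E$, $B$ is strongly noetherian by the Artin--Van den Bergh--Stafford theory of such rings, and it has GK dimension $\dim E + 1 = 2$. Since $B = A/(\Omega_1,\Omega_2)$ where $\Omega_1, \Omega_2$ is a regular central sequence of degree-$2$ elements in the Auslander-regular, Cohen--Macaulay, Koszul algebra $A$, standard results on factoring by regular central sequences yield that $B$ is Auslander-Gorenstein of dimension $2$, Cohen--Macaulay, and Koszul (for the Koszul statement, use that $\Omega_1, \Omega_2$ lie in degree $2$, so that quotienting a Koszul algebra by a regular sequence of central elements in degree $2$ preserves Koszulity).

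The second stage is to apply the preservation results from \cite{davies2014cocycle1} exactly as in the proof of Theorem \ref{theorem: 4sklytwistprops}: Corollary 4.11 op.\ cit.\ transfers strongly noetherian to $B^{G,\mu}$; Lemma 4.9 op.\ cit.\ handles Hilbert series and hence GK dimension (since twisting fixes the graded vector space structure, GK dimension is automatic); Proposition 4.32 op.\ cit.\ gives Auslander-Gorenstein of dimension $2$ and the Cohen--Macaulay property; and Proposition 4.25 op.\ cit.\ gives Koszulity. As a consistency check, applying the regular-sequence description $B^{G,\mu} = A^{G,\mu}/(\Theta_1,\Theta_2)$ also yields each property: strong noetherianness is inherited by graded factor rings, the GK dimension drops by exactly two, the Auslander-regular dimension-$4$ property of $A^{G,\mu}$ becomes Auslander-Gorenstein dimension $2$ after killing a regular central sequence of length two, Cohen--Macaulay is preserved, and Koszulity is preserved under quotients by regular sequences of central elements of degree $2$.

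The main obstacle will be bookkeeping rather than substance: I must check that the specific technical hypotheses of each invoked result from \cite{davies2014cocycle1} are met by $B$ with its $G$-grading and by the $2$-cocycle $\mu$ of \eqref{eq: 2cocycle}. Most of these hypotheses were already verified for $A$ when proving Theorem \ref{theorem: 4sklytwistprops} and pass to the factor $B$ without modification, since the $G$-action on $B$ is the restriction of that on $A$ and $\mu$ is unchanged. The only point warranting care is the Koszul step, where one must ensure that $B$ is genuinely quadratic with a quadratic Gr\"obner-type presentation compatible with the grading; this is visible from its presentation as the quotient of $A$ by the degree-$2$ regular sequence $(\Omega_1,\Omega_2)$.
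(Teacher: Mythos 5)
Your proposal is essentially correct, and it substantially overlaps with the paper's proof, but the decomposition differs in an instructive way. For (ii), (iv), (v) your route is exactly the paper's: verify the property for $B$ (GK dimension via Artin--Van den Bergh, Cohen--Macaulay from $A$ via Levasseur, Koszul from Smith--Stafford), then invoke the preservation-under-twisting results of \cite{davies2014cocycle1}. For (i) and (iii), however, the paper does \emph{not} twist from $B$; it instead works with the presentation $B^{G,\mu}\cong A^{G,\mu}/(\Theta_1,\Theta_2)$, using Lemma \ref{lem: twistthcr} and \cite[Proposition 4.9(1)]{artin1999generic} for strong noetherianity and \cite[Theorem 3.6(2)]{levasseur1992some} (applied twice) for Auslander--Gorenstein. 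You propose both routes and treat the twist-from-$B$ route as primary. This is fine for (i) (strongly noetherian transfers from $B$ by the same Corollary 4.11 used for $A$), but for (iii) your primary route quietly requires that Proposition 4.32 of \cite{davies2014cocycle1} preserves the Auslander--\emph{Gorenstein} property under twisting, whereas the paper only ever invokes that proposition for the stronger Auslander-\emph{regular} property. Since $B$ has infinite global dimension, you cannot apply the Auslander-regular version to it; the paper sidesteps this exactly by routing (iii) through the Auslander-regular algebra $A^{G,\mu}$ and then dropping down. Your cross-check route supplies precisely this argument, so nothing is lost, but you should be aware that the cross-check is in fact doing the essential work for (iii) rather than being redundant confirmation.
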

\begin{proof}
For (ii), observe that \cite[Lemma 4.9]{davies2014cocycle1} implies that GK dimension is preserved under twisting, while $\text{GKdim}(B)=2$ by \cite[Proposition 1.5]{artin1990twisted}. To prove (iv) we note that $A$ is Cohen-Macaulay
and therefore so is $B$ by \cite[Corollary 6.7]{levasseur1992some}. Applying \cite[Proposition 4.32(i)]{davies2014cocycle1} shows
that $B^{G,\mu}$ is also Cohen-Macaulay. 

By Theorem \ref{theorem: 4sklytwistprops}, $A^{G,\mu}$ has properties (i) and (iii), therefore it suffices to show that
these two properties are preserved by going to the factor ring. Lemma \ref{lem: twistthcr} shows that $\Theta_1$ and
$\Theta_2$ form a regular sequence of central elements in $B^{G,\mu}$, in which case using \cite[Proposition
4.9(1)]{artin1999generic} shows that (i) is true. Applying \cite[Theorem 3.6(2)]{levasseur1992some} twice proves (iii).
Finally, $B$ is Koszul by \cite[Theorem 3.9]{stafford1994regularity}. Using \cite[Proposition 4.25]{davies2014cocycle1} shows that
this is also true for $B^{G,\mu}$.
\end{proof}
\begin{rem}\label{rem: bgmunilp}
Whereas $B$ is a domain---which follows from the irreducibility of $E$---$B^{G,\mu}$ contains nilpotent elements in
degree 1. Let $v=v_0-i v_1 - i v_2 -v_3$ where $i^2=-1$. One can compute by hand or using the computer program Affine that
\begin{equation}\label{eq: nilpotentelement}
v^2 = -\Theta_1-i f_{2}^{\mu}-i f_4^{\mu} - f_6^{\mu} = 0.
\end{equation}

Since $v^2$ belongs to the relations of $B^{G,\mu}$ (which are $G$-invariant), so must $(v^g)^2=(v^2)^g$ for all $g \in
G$. Thus the other elements in the same $G$-orbit as $v$ are also nilpotent. Curiously, these elements are linearly
independent and therefore they generate $A^{G,\mu}$ as an algebra. In that ring the square of each generator is central,
since only terms of the form $f_i^{\mu}$ vanish in \eqref{eq: nilpotentelement}.
\end{rem}

While it may not be a domain, $B^{G,\mu}$ is prime as we prove in Corollary \ref{cor: actuallyprime}. We will prove the weaker statement that $B^{G,\mu}$ is semiprime in the next result. The following
definition is needed beforehand: given a ring $R$ on which a finite group $G$ acts by ring automorphisms, we say that
$R$ has no \emph{additive $|G|$-torsion} if $|G|r \neq 0$ for all $r \in R$.
\begin{proposition}\label{prop: semiprime}
$B^{G,\mu}$ is semiprime. 
\end{proposition}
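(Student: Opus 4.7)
The plan is to reduce the semiprimeness of $B^{G,\mu}$ to the semiprimeness of a larger ring on which $G$ acts by ring automorphisms, and then invoke the Bergman--Isaacs/Montgomery theorem on fixed rings of finite group actions.

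First I would recall from \S\ref{subsec: cocycletwists} that $A^{G,\mu}$ (and hence, by passing to the factor ring as in Lemma \ref{lem: twistthcr}, also $B^{G,\mu}$) admits the alternative description as the invariant subring $(BG_{\mu})^{G}$ under the action $(bg)^{h} = \chi_{g}(h)\,b^{h} g$. Under Hypotheses \ref{hyp: genhypforfatpts} one has the isomorphism $kG_{\mu} \cong M_{2}(k)$ given in \eqref{eq: diagmat}, which extends to an isomorphism of $k$-algebras $BG_{\mu} \cong B \otimes_{k} M_{2}(k) \cong M_{2}(B)$. The $G$-action on $BG_{\mu}$ transports to an action of $G$ on $M_{2}(B)$ by $k$-algebra automorphisms, and
\begin{equation*}
B^{G,\mu} \;\cong\; \bigl(M_{2}(B)\bigr)^{G}.
\end{equation*}

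Next I would observe that $M_{2}(B)$ is prime, and in particular semiprime. Indeed, $B = B(E,\calL,\sigma)$ is a domain because the elliptic curve $E$ is integral and $\calL$ is invertible (see e.g.\ the basic properties of twisted homogeneous coordinate rings recalled in \S\ref{subsec: 4dimsklyanin}); primeness of $B$ immediately passes to $M_{2}(B)$. Moreover, since $\mathrm{char}(k) \nmid |G| = 4$, the ring $M_{2}(B)$ has no additive $|G|$-torsion in the sense defined just before the statement.

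The last step is to invoke the Bergman--Isaacs theorem (in the form proved by Montgomery): if a finite group $G$ acts by ring automorphisms on a semiprime ring $R$ that has no additive $|G|$-torsion, then the fixed ring $R^{G}$ is semiprime. Applying this with $R = M_{2}(B)$ gives that $(M_{2}(B))^{G} \cong B^{G,\mu}$ is semiprime, as desired.

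The main obstacle is essentially bookkeeping: one must check that the identification $B^{G,\mu} \cong (M_{2}(B))^{G}$ coming from the two equivalent constructions of the cocycle twist really does send the internal multiplication of $B^{G,\mu}$ to the ordinary matrix multiplication restricted to the invariants, so that the $G$-action on $M_{2}(B)$ is by genuine ring automorphisms (not merely twisted ones). Once this identification is in place, primeness of $B$ together with the hypothesis on $\mathrm{char}(k)$ makes the Bergman--Isaacs theorem directly applicable, and the conclusion follows. The slight strengthening to \emph{primeness} of $B^{G,\mu}$ deferred to Corollary \ref{cor: actuallyprime} presumably requires a finer argument ruling out a non-trivial decomposition of the centre, but for semiprimeness the above suffices.
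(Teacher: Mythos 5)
Your proof is correct and is essentially identical to the paper's argument: both identify $B^{G,\mu}$ with $(M_2(B))^G$, observe that $M_2(B)$ is semiprime (as a matrix ring over the domain $B$) with no additive $|G|$-torsion since $\mathrm{char}(k)\nmid|G|$, and then invoke the Bergman--Isaacs/Montgomery theorem on fixed rings (the paper cites \cite[Corollary 1.5(1)]{montgomery1980fixd}). No differences of substance.
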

\begin{proof}
As it is a matrix ring over a domain, $M_2(B)$ is certainly semiprime. Note that $\text{char}(k)\nmid |G|$, hence $|G|
\in k$ acts faithfully on $M_2(B)$ and thus there is no additive $|G|$-torsion. Since $B^{G,\mu}=M_2(B)^G$, one can
apply \cite[Corollary 1.5(1)]{montgomery1980fixd} to obtain the result.
\end{proof}

The next result of this section will show that the centre of $B^{G,\mu}$ is trivial. As a consequence we can determine
the centre of $A^{G,\mu}$ as well. To do so we will need to use the following property.
\begin{defn}[{\cite[Introduction]{rogalski2006proj}}]\label{defn: projsimple}
A $k$-algebra $A$ is \emph{projectively simple} if for any two-sided ideal of $A$, $I$
say, one has $\text{dim}_k(A/I) < \infty$.  
\end{defn}
\begin{proposition}\label{prop: centrethcrtwist}
The centre of $B^{G,\mu}$ is trivial, that is $Z(B^{G,\mu})=k$.
\end{proposition}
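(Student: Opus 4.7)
The plan is to establish that $B^{G,\mu}$ is projectively simple in the sense of Definition \ref{defn: projsimple}, and then combine this with the semiprimeness of $B^{G,\mu}$ (Proposition \ref{prop: semiprime}) and a Hilbert series computation to rule out any nontrivial homogeneous central element of positive degree.

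For projective simplicity of $B^{G,\mu}$, I would start from the fact that $B = B(E,\calL,\sigma)$ is itself projectively simple under Hypothesis \ref{hypsing: siginforder}. This is a standard feature of twisted homogeneous coordinate rings: graded two-sided ideals of $B$ correspond to $\sigma$-invariant closed subschemes of $E$, and because $\sigma$ is a translation of infinite order every $\sigma$-orbit is infinite, so the only proper $\sigma$-invariant closed subscheme of $E$ is empty, forcing every nonzero graded two-sided ideal of $B$ to be cofinite. Now let $I \subseteq B^{G,\mu}$ be a nonzero graded two-sided ideal. Since the construction of $B^{G,\mu}$ preserves the $(\N,G)$-bigrading, $G$ acts on $B^{G,\mu}$ by $\N$-graded algebra automorphisms, so $J := \sum_{g \in G} g(I)$ is a nonzero $G$-invariant graded two-sided ideal of $B^{G,\mu}$ containing $I$. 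Because $J$ is a sum of $G$-isotypic components lying in the pieces $B_h$, and because $\ast_{\mu}$ and the original multiplication of $B$ differ only by nonzero scalars on each such homogeneous piece, $J$ is simultaneously a two-sided ideal of $B$. Projective simplicity of $B$ gives $\dim_k(B/J) < \infty$, and since $B$ and $B^{G,\mu}$ share the same underlying graded vector space, we obtain $\dim_k(B^{G,\mu}/I) \leq \dim_k(B^{G,\mu}/J) < \infty$. A standard reduction to graded ideals (via the graded part of an arbitrary two-sided ideal) then shows $B^{G,\mu}$ is projectively simple.

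For the center itself, suppose for contradiction that $z \in Z(B^{G,\mu})$ is a nonzero homogeneous element of positive degree $n$. Both $(z) = zB^{G,\mu}$ and $\text{Ann}(z) := \{r \in B^{G,\mu} : zr = 0\}$ are graded two-sided ideals (the latter being two-sided by centrality). The Hilbert series of $B^{G,\mu}$ agrees with that of $B$, which from the regular sequence $\Omega_1,\Omega_2$ in $A$ equals $(1+t)^2/(1-t)^2$. If $z$ were regular then $H_{B^{G,\mu}/(z)}(t) = (1-t^n)(1+t)^2/(1-t)^2$, which is never a polynomial (setting $t=1$ in the numerator gives a nonzero value whereas $(1-t)$ divides the denominator), contradicting finite-dimensionality of $B^{G,\mu}/(z)$ that follows from projective simplicity. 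Hence $\text{Ann}(z) \neq 0$, and projective simplicity of $B^{G,\mu}$ now forces both $(z)$ and $\text{Ann}(z)$ to have finite codimension. However $(z) \cdot \text{Ann}(z) = 0$ since $\text{Ann}(z)$ is a left ideal, whence $((z) \cap \text{Ann}(z))^2 \subseteq (z)\cdot\text{Ann}(z) = 0$; semiprimeness (Proposition \ref{prop: semiprime}) then yields $(z)\cap\text{Ann}(z) = 0$. The intersection of two finite-codimensional ideals is finite-codimensional, so $B^{G,\mu}$ would itself be finite dimensional, contradicting $\text{GKdim}(B^{G,\mu}) = 2$ from Theorem \ref{thm: thcrtwistprops}. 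Therefore $Z(B^{G,\mu}) = k$.

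The main obstacle is the transfer of projective simplicity from $B$ to $B^{G,\mu}$: one must carefully verify that the $G$-invariant hull of a $B^{G,\mu}$-ideal really is a $B$-ideal, which in turn relies both on $G$ acting on $B^{G,\mu}$ by algebra automorphisms and on the fact that, restricted to each $G$-isotypic component, the twisted multiplication $\ast_\mu$ differs from the original multiplication by an invertible scalar coming from $\mu$. A secondary subtlety is ensuring that the Hilbert series of $B^{G,\mu}$ is genuinely $(1+t)^2/(1-t)^2$, which I would justify by invoking $H_{B}(t) = (1+t)^2/(1-t)^2$ (from the two degree-$2$ central elements in $A$) together with the Hilbert series preservation under cocycle twisting established in Lemma 4.9 of \cite{davies2014cocycle1}.
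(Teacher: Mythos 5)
Your route is genuinely different from the paper's. The paper reduces a putative central element $f \in Z(B^{G,\mu})$ of positive degree to one that is homogeneous for both the $\N$-grading and the $G$-grading, then applies \cite[Lemma 4.1]{davies2014cocycle1} to untwist the factor $B^{G,\mu}/(f)$ to $B/(f')$ for a \emph{normal} element $f'$; since $B$ is a domain, $f'$ is automatically regular, so \cite[Lemma 5.7]{levasseur1992some} gives $\text{GKdim }B/(f')=1$, contradicting projective simplicity of $B$. You instead remain inside $B^{G,\mu}$, which is not a domain, and compensate with a Hilbert-series argument to show $z$ cannot be regular, then use semiprimeness to force $(z)\cap\text{Ann}(z)=0$ and derive a contradiction from finite codimension. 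This avoids the GK-dimension drop lemma and is a legitimately complementary viewpoint.

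There is, however, a gap in your transfer of projective simplicity from $B$ to $B^{G,\mu}$. You form $J:=\sum_{g\in G}g(I)\supseteq I$, observe $\dim_k(B^{G,\mu}/J)<\infty$, and conclude $\dim_k(B^{G,\mu}/I)\leq \dim_k(B^{G,\mu}/J)<\infty$. The inequality is backwards: since $I\subseteq J$, the algebra $B^{G,\mu}/J$ is a \emph{quotient} of $B^{G,\mu}/I$, so $\dim_k(B^{G,\mu}/J)\leq \dim_k(B^{G,\mu}/I)$, and finiteness of the former says nothing about the latter. Fortunately you never need full projective simplicity of $B^{G,\mu}$: the only ideals you apply it to are $(z)$ and $\text{Ann}(z)$. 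If you first reduce $z$ to be homogeneous with respect to the $G$-grading as well (legitimate because $G$ is abelian, so the $G$-homogeneous components of a central element are again central — the paper makes exactly this reduction), then both $(z)$ and $\text{Ann}(z)$ are already $G$-invariant, since each $h\in G$ sends $z\in B^{G,\mu}_g$ to the nonzero scalar multiple $\chi_{g^{-1}}(h)\,z$. In that case $J=I$ for the ideals you actually use, the untwist to a nonzero graded two-sided ideal of $B$ is direct, and projective simplicity of $B$ gives the finite codimension you need without passing through the (incorrectly argued) general claim. With that correction the remainder of your argument — the Hilbert-series non-regularity of $z$, nonvanishing of $\text{Ann}(z)$, and the semiprimeness contradiction with $\text{GKdim}=2$ — is sound.
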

\begin{proof}
Suppose that $f \in Z(B^{G,\mu})$. We can assume without loss of generality that $f$ is homogeneous with respect to both
the $\N$-grading and the $G$-grading: the homogeneous components (with respect to either grading) of a central element
must also be central. Assume further that it has strictly positive \N-degree. We can untwist the factor ring $B^{G,\mu}/(f)$ to obtain $B/(f')$, where $f'$ is normal by \cite[Lemma 4.1]{davies2014cocycle1}. Moreover, $f'$ is regular since $B$ is a domain. One may therefore apply
\cite[Lemma 5.7]{levasseur1992some}, which implies that $\text{GKdim }B/(f')=\text{GKdim }B -1$. By \cite[Proposition
1.5]{artin1990twisted}, $B$ has GK dimension 2, hence $\text{GKdim }B/(f')=1$. However, when $|\sigma| = \infty$ holds $B$ is projectively simple by \cite[Proposition 0.1]{rogalski2006proj}, therefore by definition any factor ring must be finite-dimensional. This contradiction
implies that $Z(B^{G,\mu})=k$.
\end{proof}
\begin{corollary}\label{cor: centretwistskly}
The centre of $A^{G,\mu}$ is $Z(A^{G,\mu})=k[\Theta_1,\Theta_2]$.
\end{corollary}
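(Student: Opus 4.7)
One inclusion, $k[\Theta_1,\Theta_2] \subseteq Z(A^{G,\mu})$, is immediate from Lemma \ref{lem: twistthcr}. For the reverse inclusion, the plan is to bootstrap Proposition \ref{prop: centrethcrtwist} (which yields $Z(B^{G,\mu}) = k$) using that $\Theta_1,\Theta_2$ is a central regular sequence in $A^{G,\mu}$ (Lemma \ref{lem: twistthcr}). Set $I := (\Theta_1,\Theta_2) \triangleleft A^{G,\mu}$.

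The first key step would be the standard consequence of $\Theta_1,\Theta_2$ being a central regular sequence, namely an isomorphism of graded rings
$$
\mathrm{gr}_I A^{G,\mu} \;\cong\; B^{G,\mu}[t_1,t_2],
$$
where $t_i$ denotes the class of $\Theta_i$ in $I/I^2$ and is central of $\mathbb{N}$-degree $2$; this comes from the acyclicity of the Koszul complex attached to $(\Theta_1,\Theta_2)$. Since $t_1,t_2$ are central, I then obtain
$$
Z(\mathrm{gr}_I A^{G,\mu}) \;=\; Z(B^{G,\mu})[t_1,t_2] \;=\; k[t_1,t_2]
$$
by Proposition \ref{prop: centrethcrtwist}.

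Now given a nonzero $f \in Z(A^{G,\mu})$, I would decompose into $\mathbb{N}$-homogeneous components (each central) and reduce to $f$ homogeneous of some degree $n$. Let $N \geq 0$ be maximal with $f \in I^N$; this $N$ is finite because $I^M$ only contains elements of $\mathbb{N}$-degree at least $2M$. The symbol $\sigma(f) \in I^N/I^{N+1}$ is nonzero, central in $\mathrm{gr}_I A^{G,\mu}$, and hence of the form $p(t_1,t_2)$ with $p \in k[t_1,t_2]$ homogeneous of $t$-degree $N$. Comparing $\mathbb{N}$-degrees forces $n = 2N$ (and in particular $n$ is even, a pleasant byproduct). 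Setting $\tilde{f} := p(\Theta_1,\Theta_2) \in k[\Theta_1,\Theta_2]$, the element $f - \tilde{f}$ lies in $I^{N+1} \cap A^{G,\mu}_n$, which vanishes because $2(N+1) > n$. Thus $f = \tilde{f} \in k[\Theta_1,\Theta_2]$.

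The main technical point is the associated-graded identification; this is standard for central regular sequences, so the expected obstacle is routine bookkeeping rather than substance. Once that is in place the proof is one-step (no induction needed), with $\mathbb{N}$-degree constraints doing the work of terminating the argument.
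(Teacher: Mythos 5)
Your argument is correct, and it is precisely the filled-in version of what the paper intends: the paper merely says "repeat the method of proof of [Prop.~6.12]{levasseur1993modules}", and the standard content of that method is exactly your associated-graded argument — use the central regular sequence $\Theta_1,\Theta_2$ to identify $\mathrm{gr}_I A^{G,\mu}$ with $B^{G,\mu}[t_1,t_2]$, note the symbol map sends central elements to central elements, and bootstrap $Z(B^{G,\mu})=k$ from Proposition \ref{prop: centrethcrtwist}. So this is the same approach as the paper, just written out in full.
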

\begin{proof}
By the previous proposition $Z(B^{G,\mu}) = k$. This allows us to repeat the method of proof used in \cite[Proposition 6.12]{levasseur1993modules}.
\end{proof}

Corollary \ref{cor: centretwistskly} allows us to show that $A^{G,\mu}$ is not isomorphic to one of the previously
discovered examples of AS-regular algebras with 20 point modules. Since the proof requires the computer program Affine, the result holds only for generic parameters values.
\begin{proposition}\label{prop: new}
Assume that a 4-dimensional Sklyanin algebra $A(\alpha,\beta,\gamma)$ is associated to the elliptic curve $E$ and
automorphism $\sigma$, with $|\sigma|=\infty$. Then for generic parameters $\alpha,\beta$ and $\gamma$,
$A^{G,\mu}$ is not isomorphic to any of the previously studied examples in the literature of
AS-regular algebras of dimension 4 with 20 point modules.
\end{proposition}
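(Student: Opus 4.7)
\begin{proofof}{Proposition \ref{prop: new}}
The plan is to use the centre of $A^{G,\mu}$, which was computed in Corollary \ref{cor: centretwistskly}, as an isomorphism invariant that separates $A^{G,\mu}$ from the candidate algebras in the literature. Since any graded algebra isomorphism induces a graded isomorphism of centres, if one can show that for generic $\alpha, \beta, \gamma$ no previously studied AS-regular algebra of dimension 4 with 20 point modules has centre isomorphic to $k[\Theta_1,\Theta_2]$, the result will follow.

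More concretely, by Corollary \ref{cor: centretwistskly} one has $Z(A^{G,\mu}) = k[\Theta_1,\Theta_2]$, a polynomial ring on two algebraically independent central elements of degree 2, so $Z(A^{G,\mu})$ is generated in degree 2 and has Hilbert series $(1-t^2)^{-2}$, giving $\dim_k Z(A^{G,\mu})_n = \lfloor n/2 \rfloor + 1$ for even $n$ and $0$ for odd $n$. The first step is to enumerate the AS-regular algebras of dimension 4 with exactly 20 point modules appearing in \cite{vancliff1998some} and \cite{shelton1999some}, writing each as a quadratic algebra on four degree-1 generators subject to six relations involving the parameters of the relevant family.

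The second step is, for each such candidate $R$, to feed its defining relations (with symbolic or numerically generic parameter values) into Affine and compute a $k$-basis for $R_n$ in low degrees $n = 1, 2, 3, 4$, then determine $Z(R)_n$ as the kernel of the commutator maps $[\,\cdot\,, v_i]\colon R_n \to R_{n+1}$ for each degree-1 generator $v_i$. Comparing the resulting Hilbert function of $Z(R)$ with that of $Z(A^{G,\mu})$ recorded above, any discrepancy in a single degree (for instance, a different dimension of $Z(R)_2$, or the presence of a central element in degree 1 or 3, or a non-polynomial behaviour visible in degree 4) rules out an isomorphism $R \cong A^{G,\mu}$.

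The main obstacle is that this is genuinely a case-by-case verification rather than a structural argument: one must go through each family in the literature, and for each family specify an open dense subset of the parameter space on which the computed centre is stable. The role of the generic hypothesis on $(\alpha,\beta,\gamma)$ is precisely to avoid the closed subvariety of parameter space on which the centre of $A^{G,\mu}$ could accidentally acquire extra elements (or degenerate) and thus coincidentally match one of the candidate algebras; outside this closed locus, the Hilbert-series comparison above suffices.
\end{proofof}
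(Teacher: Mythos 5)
Your overall strategy---using the centre of $A^{G,\mu}$, known from Corollary \ref{cor: centretwistskly} to be $k[\Theta_1,\Theta_2]$, as an isomorphism invariant and comparing degree-by-degree with candidate algebras via Affine---is correct in spirit and aligns with the paper's approach in its computational half. However, you miss a cleaner structural dichotomy that the paper exploits, and this omission is significant.

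The paper splits the candidates into two classes. The first class is the graded Clifford algebras (which cover, for instance, much of \cite{vancliff1998some}). These are \emph{finite over their centre}, and the paper rules them out without any computation at all: since $Z(A^{G,\mu})=k[\Theta_1,\Theta_2]$ has GK dimension $2$ while $A^{G,\mu}$ has GK dimension $4$, the result \cite[Proposition 5.5]{krause2000growth} shows that $A^{G,\mu}$ cannot be finite over its centre, hence cannot be a graded Clifford algebra. This is a one-line structural argument, whereas your proposal would feed these algebras into Affine and compare Hilbert functions of centres, which is heavier and obscures the real reason the comparison works. The second class consists of the remaining examples in the literature not finite over their centre; the paper identifies these precisely as Examples 5.1 and 5.2 of \cite{cassidy2010generlizations} (citing \cite{shelton2001koszul} for why this list is exhaustive), and only for these does it resort to Affine, checking that they fail to have $3$ central elements in degree $4$ as $A^{G,\mu}$ does. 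Your candidate list, drawn only from \cite{vancliff1998some} and \cite{shelton1999some} as named in the introduction, is therefore incomplete relative to what actually needs to be checked; without pinning down the correct list of non-Clifford candidates, the case-by-case verification cannot be said to be finished. Your remarks on the role of genericity (avoiding a Zariski-closed bad locus so that the numerical Affine computation is representative) do match the paper's caveat.
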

\begin{proof}
As remarked in \cite[\S 4]{lebruyn1995central}, graded Clifford algebras are finite over their centre. By Corollary
\ref{cor: centretwistskly} the centre of $A^{G,\mu}$ has GK dimension 2, and one can then see from \cite[Proposition
5.5]{krause2000growth} that $A^{G,\mu}$ cannot be finite over its centre and thus is not a graded Clifford algebra. 

It therefore suffices to consider the algebras studied in Examples 5.1 and 5.2 from \cite{cassidy2010generlizations}, since they encompass the remaining examples in the literature that are not finite over their centre (see \cite{shelton2001koszul} for some discussion of this). Computations using Affine show that such algebras do not have 3 central elements of degree 4 (as $A^{G,\mu}$ does), which is sufficient to prove the result.
\end{proof}

Although $B$ has point modules parameterised by the elliptic curve $E$, if $|\sigma|= \infty$ then $B^{G,\mu}$ can have at most 20 point modules by Theorem \ref{thm: finitepointscheme} and Lemma \ref{lem: twistthcr}. We now prove a much stronger statement.
\begin{theorem}
\label{thm: bgmunoptmodules}
$B^{G,\mu}$ has no point modules.
\end{theorem}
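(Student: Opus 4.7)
The plan is to combine the finite classification of point modules over $A^{G,\mu}$ with a direct computation. By Lemma \ref{lem: twistthcr}, $B^{G,\mu} \cong A^{G,\mu}/(\Theta_1, \Theta_2)$, so every point module of $B^{G,\mu}$ inflates to a point module of $A^{G,\mu}$ on which both central elements $\Theta_1$ and $\Theta_2$ act as zero. Theorem \ref{thm: finitepointscheme} exhibits the point modules of $A^{G,\mu}$ as the 20 modules $\widetilde{M}_p$ indexed by $p \in \Gamma = \Gamma' \cup \{e_0, e_1, e_2, e_3\}$, so it suffices to show that $\Theta_1$ does not annihilate $\widetilde{M}_p$ for any $p \in \Gamma$.

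Fix $p \in \Gamma$ and let $m_0$ generate $\widetilde{M}_p$. Using $\widetilde{M}_p[1]_{\geq 0} \cong \widetilde{M}_{p^\phi}$, the degree-one action is $m_0 \cdot v_i = p_i m_1$ and $m_1 \cdot v_i = (p^\phi)_i m_2$ up to a fixed normalisation, so a direct expansion yields
\begin{equation*}
m_0 \cdot \Theta_1 \; = \; \widetilde{\Omega}_1(p, p^\phi) \, m_2, \qquad \widetilde{\Omega}_1(y,z) := -y_0 z_0 + y_1 z_1 + y_2 z_2 - y_3 z_3.
\end{equation*}
Centrality of $\Theta_1$ together with cyclicity of $\widetilde{M}_p$ on $m_0$ shows that $\Theta_1$ annihilates $\widetilde{M}_p$ if and only if $\widetilde{\Omega}_1(p, p^\phi) = 0$. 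Everything therefore reduces to checking $\widetilde{\Omega}_1(p, p^\phi) \neq 0$ for each $p \in \Gamma$.

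I then invoke Lemma \ref{lem: ptschemecontains} for explicit coordinates and Lemma \ref{lem: gammaorbits} for the action of $\phi$ on each orbit. Because each diagonal term $y_i z_i$ is fixed by simultaneous sign flips of $y_i$ and $z_i$, the scalar $\widetilde{\Omega}_1(p, p^\phi)$ is constant on $G$-orbits in $\Gamma$, so only one representative per orbit needs to be checked. For each singleton $\{e_j\}$, $\phi$ fixes $e_j$ and $\widetilde{\Omega}_1(e_j, e_j) = \pm 1$. On the orbit $o_e$, $\phi$ is trivial and $\widetilde{\Omega}_1((1,i,i,1),(1,i,i,1)) = -4$. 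On the three orbits $o_{g_1}, o_{g_2}, o_{g_1 g_2}$, $\phi$ acts as the subscript element; substituting representatives and clearing the inverse square-root denominators gives, up to a nonzero factor, $(1-\beta)(1+\gamma)$, $(1+\alpha)(1-\gamma)$, and $(1-\alpha)(1+\beta)$ respectively. None of these vanishes because $\{\alpha, \beta, \gamma\} \cap \{0, \pm 1\} = \emptyset$ by \eqref{eq: 4sklyanincoeffcond}.

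The only real obstacle is bookkeeping: the sixteen non-exceptional points in Lemma \ref{lem: ptschemecontains} are presented via coupled and independent $\pm$ and $\mp$ sign patterns, so some care is needed to match each representative with its $G$-orbit and to write down the corresponding $p^\phi$. Beyond that, the algebra is routine and no conceptual subtlety arises; in particular there is no need to invoke $\Theta_2$ since $\widetilde{\Omega}_1$ alone already distinguishes all twenty candidates.
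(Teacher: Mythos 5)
Your argument is correct, but it takes a genuinely different route from the paper's. The paper proceeds abstractly: if the point scheme $\Gamma''$ of $B^{G,\mu}$ were nonempty, the Rogalski--Zhang canonical map $B^{G,\mu} \to B(\Gamma'',\mathcal{M},\phi)$ would produce a factor of GK dimension 1; untwisting the $G$-invariant kernel (Lemma \ref{lem: annGinvariant}) yields a factor $B/I'$ of GK dimension 1, which is impossible because $B$ is projectively simple and $E$ has no proper $\sigma$-invariant closed subschemes. You instead exploit the finite classification directly: every point module of $B^{G,\mu} \cong A^{G,\mu}/(\Theta_1,\Theta_2)$ inflates to one of the 20 point modules $\widetilde{M}_p$ of $A^{G,\mu}$ from Theorem \ref{thm: finitepointscheme}, and you show $\Theta_1$ fails to annihilate each one. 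Your key reduction is sound: with $m_0\cdot v_i = p_i m_1$ and $m_1\cdot v_i = (p^\phi)_i m_2$ (up to a common nonzero normalisation), $m_0\cdot\Theta_1$ is a nonzero multiple of $\widetilde{\Omega}_1(p,p^\phi)m_2$, and centrality plus cyclicity gives the equivalence. The orbit-invariance of $\widetilde{\Omega}_1(p,p^\phi)$ is also legitimate: $\phi$ restricts to the action of a fixed $h\in G$ on each orbit (Lemma \ref{lem: gammaorbits}), and since $G$ is abelian the sign flips of $p$ and $p^\phi$ under any $g\in G$ pair up coordinate-wise. I verified the representative values: $\pm 1$ at the $e_j$, $-4$ on $o_e$, and $(1-\beta)(1+\gamma)/\beta\gamma$, $(1+\alpha)(1-\gamma)/\alpha\gamma$, $(1-\alpha)(1+\beta)/\alpha\beta$ on the other three order-4 orbits, all nonvanishing by \eqref{eq: 4sklyanincoeffcond}. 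Both arguments rely on Hypothesis \ref{hypsing: siginforder} (yours via Theorem \ref{thm: finitepointscheme}, the paper's via projective simplicity). The paper's approach avoids sign bookkeeping and sits naturally within the geometric machinery developed later; yours is more elementary once Theorem \ref{thm: finitepointscheme} is in hand and has the added merit of identifying $\Theta_1$ alone as the relation that excludes all twenty candidates.
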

\begin{proof}
Let $\Gamma''$ denote the point scheme of $B^{G,\mu}$ and suppose that it is non-empty. By Theorem \ref{thm:
thcrtwistprops}(i), $B^{G,\mu}$ is strongly noetherian. We can therefore use \cite[Theorem 1.1]{rogalski2008canonical}
to conclude that there exists a graded homomorphism 
\begin{equation}\label{eq: canmaptothcr}
B^{G,\mu} \rightarrow B(\Gamma'',\mathcal{M},\phi),
\end{equation}
that is surjective in high degree, where $\phi$ is an automorphism of $\Gamma''$ and $\mathcal{M}$ is a $\phi$-ample
invertible sheaf. We know that the point scheme of $B^{G,\mu}$ is 0-dimensional, therefore by \cite[Proposition
1.5]{artin1990twisted} the GK dimension of $B(\Gamma'',\mathcal{M},\phi)$ must be 1.

Let $I$ denote the kernel of the map in \eqref{eq: canmaptothcr}. Since it is surjective in high degree there must exist
$n \in \N$ such that 
\begin{equation*}
\left(\frac{B^{G,\mu}}{I}\right)_{\geq n} \cong B(\Gamma'',\mathcal{M},\phi)_{\geq n},
\end{equation*}
as $k$-vector spaces. Thus $B^{G,\mu}/I_{\geq n}$ also has GK dimension 1. 

In addition to the existence of the homomorphism in \eqref{eq: canmaptothcr}, \cite[Theorem 1.1]{rogalski2008canonical}
states that $I_{\geq n}$ consists of the elements of $B^{G,\mu}$ that annihilate all of its point modules. By Lemma
\ref{lem: annGinvariant} this ideal is $G$-invariant, therefore one can twist the $G$-grading on $B^{G,\mu}/I_{\geq n}$
by the 2-cocycle $\mu$ to obtain a factor ring $B/I'$ for some ideal $I'$. The
factor ring has GK dimension 1 by Lemma \ref{theorem: geomdescthcrtwist}(i). 

By \cite[Theorem 4.4]{artin1995noncommutative}, ideals of $B$ correspond to $\sigma$-invariant
closed subschemes of $E$. Since $\sigma$ is given by translation by a point of infinite order on $E$, there are no
nontrivial such subschemes. Thus $I'$ must equal $0$ or have finite
codimension in $B$. In either case we get a contradiction: in the former because $B$ has GK dimension 2, and in the
latter because $B/I'$ would be finite-dimensional.
\end{proof}
\begin{rem}\label{rem: otherwayzerodiv}
If $B^{G,\mu}$ were a domain then \cite[Thm 0.2]{artin1995noncommutative} would imply that there was an equivalence of categories $\text{qgr}(B^{G,\mu}) \simeq
\text{coh}(Y)$ for some projective curve $Y$. But then $B^{G,\mu}$ would have a family of pairwise non-isomorphic point modules parameterised by $Y$, contradicting the conclusion of Proposition \ref{thm: bgmunoptmodules}. This is another way to see that $B^{G,\mu}$ has zero-divisors, examples of which were exhibited in Remark \ref{rem: bgmunilp}.
\end{rem}

\subsection{Structure in relation to Artin-Stafford theory}\label{subsubsec: geomdescrthcrtwist}
In Remark \ref{rem: bgmunilp} we exhibited zero divisors in $B^{G,\mu}$. One consequence of this is that it cannot be a twisted homogeneous coordinate ring over an irreducible curve. However, there is a more general construction in \cite{artin2000semiprime} of a twisted homogeneous coordinate ring of an order over such a curve. As the main result of this section shows (Theorem \ref{thm: artinstaffordmain}), $B^{G,\mu}$ can be described in such terms.

The work in \cite{artin2000semiprime} and its predecessor \cite{artin1995noncommutative} form Artin and Stafford's classification of noncommutative projective curves. In algebraic terms, they describe the geometry associated to c.g.\ semiprime noetherian algebra of GK dimension 2. By Theorem \ref{thm: thcrtwistprops} and Proposition \ref{prop: semiprime}, $B^{G,\mu}$ has such properties, and so we can study it from the viewpoint of the classification in \cite{artin2000semiprime}. 

Since the machinery used in classification is quite complex, we will begin this section with some definitions of the constructions and geometry involved. Although a priori we only know that $B^{G,\mu}$ is semiprime, it will be shown Corollary \ref{cor: actuallyprime} that it is in fact a prime ring. Thus we will only describe the geometry needed in the prime case, since that will be sufficient for our needs. 

Our first task is to define the geometric rings that feature in the classification. To this end, consider a projective curve $Y$ over $k$, whose function field $K$ has transcendence degree 1 over the base field. Now let $T$ be a central simple $K$-algebra that is finite-dimensional over $K$. The structure sheaf $\calO_Y$ is a
subsheaf of the constant sheaf of rational functions on $Y$, namely $K$, therefore the sections of $\calO_Y$ lie inside
$K$. Since $K \subseteq T$ such sections are also contained in $T$ .
\begin{defn}[{\cite[pg. 75]{artin2000semiprime}}]\label{defn: lattice}
We say that $\mathcal{\calL}$ is an \emph{$\calO_Y$-lattice} in $T$ if it is a sheaf of
finitely generated $\calO_Y$-submodules of $T$ for which $\calL K = T$. 
\end{defn}

When the term lattice is used in future it will be in the sense of this definition. Given such a lattice, one
can define the \emph{left} and \emph{right orders} of $\calL$
by
\begin{equation*}
E(\calL)=\{\alpha \in T:\alpha \calL \subseteq \calL\}\;\text{ and }\;E'(\calL)=\{\alpha \in T: \calL \alpha\subseteq
\calL\},
\end{equation*}
respectively. 

A lattice is said to be \emph{invertible} if it is a locally free left $E(\calL)$-module
of rank 1. Lemma 1.10 from \cite{artin2000semiprime} implies that an invertible lattice is also a locally free right
$E'(\calL)$-module of rank 1. 

A crucial point for us is that for invertible lattices $\calL$ and $\mathcal{M}$ the product lattice $\calL\mathcal{M}$---where the product takes places inside $T$---is isomorphic to the tensor product $\calL \otimes_{E(\mathcal{M})}
\mathcal{M}$ if $E'(\calL)=E(\mathcal{M})$. Following the notation in \cite{artin2000semiprime}, we will denote such a
tensor product by $\calL \cdot \mathcal{M}$.

We can now introduce some more geometric objects that will be needed. Let $\mathcal{E}$ be a coherent sheaf of
$\calO_Y$-orders inside $T$, $\tau$ an automorphism of $T$, and $\mathcal{B}_1$ an invertible lattice in $T$ such that
$E(\mathcal{B}_1)=\mathcal{E}$ and $E'(\mathcal{B}_1)=\mathcal{E}^{\tau}$. One can then define a sequence of sheaves
$\{\mathcal{B}_n\}$ by $\mathcal{B}_n=\mathcal{B}_1 \otimes_{\mathcal{E}} \mathcal{B}_1^{\tau}
\otimes_{\mathcal{E}^{\tau}} \ldots  \otimes_{\mathcal{E}^{\tau^{n-1}}} \mathcal{B}_1^{\tau^{n-1}}$. The conditions on
$E(\mathcal{B}_1)$ and $E'(\mathcal{B}_1)$ imply that $\mathcal{B}_n=\mathcal{B}_1 \cdot \mathcal{B}_1^{\tau} \cdot
\ldots \cdot \mathcal{B}_1^{\tau^{n-1}}$. Using this data one can define a \emph{sheaf of bimodule
algebras} $\mathbb{B}(\mathcal{E},\mathcal{B}_1,\tau)=\bigoplus_{n \in \N}
\mathcal{B}_n$ with $\mathcal{B}_0=\mathcal{E}$ and
multiplication given by the formula $\mathcal{B}_i \cdot \mathcal{B}_j^{\tau^{i}} = \mathcal{B}_{i+j}$ for all $i, j
\geq 1$ (see \cite[pg. 102]{artin2000semiprime}).

There is a notion of ampleness for sequences of sheaves such as $\{\mathcal{B}_n\}$, which we now define.
\begin{defn}[{\cite[pg. 99]{artin2000semiprime}}]\label{def: amplebimodule}
Let $\{\mathcal{L}_n\}$ be a sequence of coherent $\mathcal{O}_Y$-modules. The sequence $\{\mathcal{L}_n\}$ is
\emph{ample} if for all coherent sheaves $\mathcal{G}$ of $\mathcal{O}_Y$-modules
and all $n \gg 0$, the sheaf $\mathcal{G} \otimes_{\mathcal{O}_Y} \mathcal{L}_n$ is generated by global sections, and
$H^1(Y,\mathcal{G} \otimes_{\mathcal{O}_Y} \mathcal{L}_n)=0$.
\end{defn}

Suppose now that $\mathcal{B}_1$ is an invertible lattice such that $E(\mathcal{B}_1)=\mathcal{E}$,
$E'(\mathcal{B}_1)=\mathcal{E}^{\tau}$, and the sequence of sheaves $\{\mathcal{B}_n\}$ defined above is ample in the
sense of Definition \ref{def: amplebimodule}. We say that $\mathcal{B}_1$ is an \emph{ample lattice}. One does not need an ample lattice to construct the rings in the next definition, however this condition does
imply that the rings constructed are noetherian. 
\begin{defn}[{\cite[pgs. 103-104]{artin2000semiprime}}]\label{defn: twistedhomring}
Let $Y$, $\tau$, $\mathcal{E}$ and $\{\mathcal{B}_n\}$ be as above. The \emph{generalised twisted homogeneous coordinate
ring} associated to this data is the ring 
\begin{equation*}
B(\mathcal{E},\mathcal{B}_1,\tau)=\bigoplus_{n \in \N} H^0(Y,\mathcal{B}_n)z^n,  
\end{equation*}
whose the multiplication is induced by that in the corresponding sheaf of bimodule algebras, i.e.
$z\beta=\beta^{\tau}z$.
\end{defn}

Such rings were originally studied in \cite{van1996translation}, although Van den Bergh's definitions differ slightly.
For brevity we will refer to such rings as \emph{twisted rings} in future.

Artin and Stafford's main results need several technical hypotheses, however we can summarise them in a simplified
case. 
\begin{theorem}[{\cite[Theorems 0.3 and 0.5, Corollary 0.4]{artin2000semiprime}}]\label{thm: artinstaffordmain}
Suppose that $R$ is a semiprime noetherian c.g.\ algebra of GK dimension 2. Then there is a Veronese subring of $R$
which in high degree is a left ideal in a twisted ring for which $\mathcal{B}_1$ is an ample lattice. Moreover, this
twisted ring is a finite left module over the Veronese ring of $R$. 
\end{theorem}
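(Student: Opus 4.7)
The plan is to recapitulate, at the level of a sketch, the strategy Artin and Stafford employ in \cite{artin2000semiprime}, since the statement is their classification theorem transplanted into our setting. First I would pass to a graded quotient ring $Q_{\mathrm{gr}}(R)$, which makes sense because $R$ is c.g.\ noetherian semiprime. Using Goldie's theorem plus the graded hypothesis, $Q_{\mathrm{gr}}(R)$ is semisimple and of the form $T[z,z^{-1};\tau]$ for a finite-dimensional semisimple algebra $T$ over a field $K:=Z(T)\cap (Q_{\mathrm{gr}}(R))_0$ and an automorphism $\tau$. The semiprime case of the statement at hand corresponds to $T$ being a central simple $K$-algebra; this is the setting described immediately before Definition \ref{defn: lattice}.

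Next I would extract the curve. The Gelfand--Kirillov calculation $\mathrm{GKdim}(R)=2$ forces $\mathrm{trdeg}_k K = 1$: if $K$ were algebraic over $k$ then $R$ would be PI of GK dimension $1$, and if $\mathrm{trdeg}_k K\geq 2$ then $R$ would contain a polynomial ring on too many variables. Hence $K$ is the function field of a unique smooth projective curve $Y$, and we take this as the curve in the statement. Passing to an appropriate Veronese $R^{(d)}$ lets us assume that the homogeneous component $R_1\cdot z^{-1}\subseteq T$ generates things nicely; this is the standard trick that enables us to avoid skyscraper pathologies and control the degree.

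With $Y$ in hand, I would construct $\mathcal{E}$ and $\mathcal{B}_1$ directly from the algebra: set $\mathcal{E}$ to be the $\calO_Y$-order in $T$ generated by local images of $R_n R_n^{-1}\subseteq T$ for $n\gg 0$, and let $\mathcal{B}_1$ be the $\calO_Y$-lattice cut out by $R_1\cdot z^{-1}$. Verifying $E(\mathcal{B}_1)=\mathcal{E}$ and $E'(\mathcal{B}_1)=\mathcal{E}^{\tau}$ is essentially a local computation, and once these hold the product sequence $\{\mathcal{B}_n\}$ is well defined and inherits a canonical map from $R^{(d)}$. Comparing $R^{(d)}_n$ to $H^0(Y,\mathcal{B}_n)$ in high degree is the classical step that places $R^{(d)}$ as a left ideal (in high degree) of the twisted ring $B(\mathcal{E},\mathcal{B}_1,\tau)$.

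The hard part, and the main obstacle, is establishing ampleness of $\{\mathcal{B}_n\}$ in the sense of Definition \ref{def: amplebimodule} and the finite-module assertion; this is where the bulk of the technical work in \cite{artin2000semiprime} lies. Ampleness requires a cohomological vanishing statement $H^1(Y,\mathcal{G}\otimes\mathcal{B}_n)=0$ for $n\gg 0$ together with global generation, and both are proved by reducing, via a careful choice of Veronese, to the case where $\mathcal{B}_1$ is a locally free $\mathcal{E}$-module of sufficiently positive degree. The finiteness of $B(\mathcal{E},\mathcal{B}_1,\tau)$ over $R^{(d)}$ then follows by comparing Hilbert series: both rings have the same growth rate by the GK dimension 2 hypothesis, and the left ideal obtained above has finite index in each graded piece for $n\gg 0$. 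Packaging all of this yields the stated Veronese/twisted-ring dichotomy. In our application we do not need to redo any of this work; we quote the theorem and apply it to $R=B^{G,\mu}$, whose hypotheses were verified in Theorem \ref{thm: thcrtwistprops} and Proposition \ref{prop: semiprime}.
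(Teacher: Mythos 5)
The paper does not prove Theorem \ref{thm: artinstaffordmain}; it is stated with an explicit citation to Theorems 0.3, 0.5 and Corollary 0.4 of \cite{artin2000semiprime} and is used as a black box. You correctly recognise this in your final sentence, and the paper's only contribution at this point is to verify (in Theorem \ref{thm: thcrtwistprops} and Proposition \ref{prop: semiprime}) that $B^{G,\mu}$ satisfies the hypotheses. Your outline of the Artin--Stafford strategy is a reasonable high-level sketch and captures the main steps: passing to $Q_{\mathrm{gr}}(R)=T[z,z^{-1};\tau]$, extracting the curve from the transcendence degree of $K=Z(T)$, building $\mathcal{E}$ and $\mathcal{B}_1$ from the graded pieces, and noting that ampleness is the technically heavy part. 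A few small imprecisions, none of which matter for the paper's use of the theorem: it is the \emph{prime} rather than the semiprime case that corresponds to $T$ being a central simple $K$-algebra (in the semiprime case $T$ is semisimple and $Z(T)$ a product of fields, which is exactly why the paper works to upgrade semiprimeness to primeness in Corollary \ref{cor: actuallyprime}); and the classification curve in Artin--Stafford need not be the smooth model in general, though it is in the paper's application by Lemma \ref{lem: ellcurve}.
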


We will show that $B^{G,\mu}$ fits into the classification in a particularly nice way, in the respect that it is isomorphic to a twisted ring. 

To obtain a geometric description of $B^{G,\mu}$ we first go back to $B$. By \cite[Proposition 2.1]{smith1994center},
this ring embeds in the Ore extension $k(E)[z,z^{-1};\sigma]$, which is its graded quotient ring. Here the function field $k(E)$ is the graded division ring of the homogeneous coordinate ring of the elliptic curve, defined in \eqref{eq:
thcroreextn}. The action of $G$ on $B$ extends to $k(E)[z,z^{-1};\sigma]$ via localisation, inducing a $G$-grading under which $B$ is a graded subring. 

Note that the skew generator $z$ can be replaced with another element in $k(E)z$ up to changing $\sigma$ by a conjugation, which is trivial in $k(E)$. Moreover, $B_1 \subset k(E)z$ and $x_0 \in B_1 \cap B_e$, hence we can assume that $z$ is fixed by the action of $G$ without changing $\sigma$.

The action of $G$ on $k(E)[z,z^{-1};\sigma]$ must preserve its skew structure, hence for all $f \in k(E)$ and $g \in G$ one has 
\begin{equation}\label{eq: sigmagcommute}
(f^g)^{\sigma}=(f^{\sigma})^g. 
\end{equation}

By \cite[Chapter I, Theorem 4.4]{hartshorne1977algebraic} an algebra automorphism on $k(E)$ induces a unique automorphism on $E$ itself. The induced actions of $G$ and $\sigma$ must therefore also commute on the curve by \eqref{eq: sigmagcommute}.
\begin{lemma}\label{lem: actionscoincide}
This action of $G$ on $E$ coincides with that induced by the action on point modules of $A$, given in \eqref{eq:
Gactonpoints}. 
\end{lemma}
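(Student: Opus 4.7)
The plan is to match the linear $G$-action on the generators $x_i$ of $B_1$ on both sides of the identification: the action on point modules of $A$ (which by Lemma \ref{lem: threegensannihilate} restricts to $E$ and is given by \eqref{eq: Gactonpoints}), and the action induced from $G$ on $k(E)$ via the embedding $B \hookrightarrow k(E)[z, z^{-1}; \sigma]$. First I would record the $G$-action on the degree 1 generators: using the character formula from the start of \S\ref{sec: twist4dimsklyanin} together with the grading \eqref{eq: 4sklyaningrading} and the relation $a \in A_g \iff a^h = \chi_{g^{-1}}(h)a$, one finds that $x_0$ is $G$-invariant while $x_j^g = \pm x_j$ for $j = 1, 2, 3$ with signs dictated by the characters $\chi_{g_j}$. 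This same action is inherited by $B_1$, and extends to all of $B$ and then to the graded quotient ring $k(E)[z, z^{-1}; \sigma]$; since $z$ has been chosen to be $G$-fixed, the action preserves the degree-zero subring $k(E)$.

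Next I would identify the $x_i$ with coordinate functions on $E$. Under the embedding $E \hookrightarrow \proj{k}{3}$ afforded by $\calL$, the generators $x_0, \ldots, x_3$ are the restrictions of the homogeneous coordinate functions on $\proj{k}{3}$, so a point $p \in E$ has homogeneous coordinates $(x_0(p) : x_1(p) : x_2(p) : x_3(p))$ and the ratios $x_i/x_0$ are elements of $k(E)$. By \cite[Chapter I, Theorem 4.4]{hartshorne1977algebraic} the induced $G$-action on $k(E)$ corresponds to a genuine $G$-action by automorphisms $\tilde{g}$ of $E$, characterised by the relation $f^g(p) = f(\tilde{g}(p))$ for all $f \in k(E)$ and $p \in E$.

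Evaluating this identity on $f = x_i/x_0$ for $i = 1, 2, 3$ and feeding in the signs computed in the first paragraph reads off the coordinates of $\tilde{g}(p)$ directly from those of $p$; I would carry out this short check for each of the three non-identity elements of $G$ and verify that the resulting formulas coincide with \eqref{eq: Gactonpoints}. Since $\tilde{g}$ and $p \mapsto p^g$ are both morphisms of the irreducible curve $E$ and agree on the dense open subset $\{x_0 \neq 0\}$, they coincide on all of $E$, which disposes of any point on the complementary chart. The main (and minor) obstacle is simply fixing the convention by which an algebra automorphism of $k(E)$ corresponds to a scheme automorphism of $E$: once this is pinned down consistently with the direction of the $G$-action on point modules, the verification becomes a matter of tracking signs.
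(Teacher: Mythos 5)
Your proof is correct, but it takes a genuinely different route from the paper's. The paper's proof is a two-line conceptual argument: both $G$-actions on $E$ arise from the same $G$-action on $B$ (one by localising to the graded quotient ring and passing to $k(E)$, the other by acting on the right ideals $I_p$ that define point modules), so they must coincide. Your proof instead makes this concrete by explicit coordinate computation: you record the diagonal signs of $G$ on the generators $x_i$, interpret these as coordinate functions on $E \hookrightarrow \proj{k}{3}$, push the action through Hartshorne's contravariant correspondence between automorphisms of $k(E)$ and automorphisms of $E$, and then check that the resulting formulas on $\{x_0 \neq 0\}$ match \eqref{eq: Gactonpoints}, extending by density. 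The trade-off is that your argument is longer but self-contained and unambiguous about signs, whereas the paper's is elegant but leaves the reader to unpack why the two constructions are compatible. One remark worth being explicit about: Hartshorne's correspondence is contravariant, so $g \mapsto \tilde g$ is a priori an anti-homomorphism from $G$ to $\mathrm{Aut}(E)$; this is harmless here because $G$ is elementary abelian of exponent 2, and in any case your pointwise verification sidesteps the issue entirely. Your observation that the identification on the affine chart $\{x_0 \neq 0\}$ propagates to all of $E$ by density and separatedness is the right way to close the argument, and it is a point the paper does not spell out.
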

\begin{proof}
Both actions are a consequence of the action of $G$ on $B$; one by localisation, the other through the action on ideals
defining points in $\mathbb{P}^3_k$. 
\end{proof}

This lemma allows us to extend the conclusions of Proposition \ref{prop: sklyaninfatpoints} to $\text{qgr}(A^{G,\mu})$.
\begin{corollary}\label{cor: qgrisos}
Let $p, q \in E$. There is an isomorphism $\pi(M_p^2)\cong \pi(M_q^2)$ in $\text{qgr}(A^{G,\mu})$ if and only if $q \in
[p]$.
\end{corollary}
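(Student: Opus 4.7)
The plan is to treat the two directions asymmetrically. The $(\Leftarrow)$ direction is immediate from Corollary \ref{cor: fatpointisoclasses}: if $q \in [p]$ then $M_p^2 \cong M_q^2$ already in $\text{grmod}(A^{G,\mu})$, and applying $\pi$ preserves this isomorphism.

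For the $(\Rightarrow)$ direction, I first want to lift the qgr-isomorphism back to a grmod-isomorphism after a shift. By Proposition \ref{prop: sklyaninfatpoints} the modules $M_p^2$ and $M_q^2$ are fat point modules of multiplicity 2, hence 1-critical of GK dimension 1, so their tails are simple in $\text{qgr}(A^{G,\mu})$. By the same reasoning used in the proofs of Corollary \ref{cor: fatpointisoclasses} and Theorem \ref{thm: finitepointscheme} (invoking \cite[Proposition 1.5]{smith1992the}), an isomorphism $\pi(M_p^2)\cong\pi(M_q^2)$ yields an $\N$-graded $A^{G,\mu}$-module isomorphism $(M_p^2)[n]_{\geq 0}\cong (M_q^2)[n]_{\geq 0}$ for some sufficiently large $n\in\N$.

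Next I identify these truncated shifts. The fat point module $M_p^2$ arises by restricting the $M_2(A)$-module $M_p\otimes_k k^2$ along the embedding $A^{G,\mu}\hookrightarrow M_2(A)$ of Lemma \ref{lem: matrixgens}. The shift-and-truncate operation is purely a graded vector space manipulation that is compatible with this restriction, so using the standard point module identity $M_p[n]_{\geq 0}\cong M_{\sigma^n(p)}$ in $\text{grmod}(A)$ one obtains $(M_p^2)[n]_{\geq 0}\cong M_{\sigma^n(p)}^2$ as $A^{G,\mu}$-modules, and likewise for $q$. Thus $M_{\sigma^n(p)}^2\cong M_{\sigma^n(q)}^2$ in $\text{grmod}(A^{G,\mu})$, and Corollary \ref{cor: fatpointisoclasses} forces $\sigma^n(q)\in [\sigma^n(p)]$, i.e.\ $\sigma^n(q)=\sigma^n(p)^g$ for some $g\in G$.

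The last step is to transfer the equality $\sigma^n(q)=\sigma^n(p)^g$ back down through $\sigma^n$. This is exactly where Lemma \ref{lem: actionscoincide} and the commutation relation \eqref{eq: sigmagcommute} are needed: since the $G$-action on $E$ commutes with $\sigma$, we may rewrite $\sigma^n(p)^g=\sigma^n(p^g)$, and the automorphism $\sigma$ is injective on $E$, giving $q=p^g\in[p]$. The one piece requiring a little care---and what I view as the main conceptual obstacle---is the compatibility claim $(M_p^2)[n]_{\geq 0}\cong M_{\sigma^n(p)}^2$ \emph{as $A^{G,\mu}$-modules}, as opposed to merely as $A$-modules; this ultimately rests on the fact that the $G$-grading on $A$ is preserved under the identification with the subalgebra of $M_2(A)$, so shifting commutes with restriction along the twist embedding.
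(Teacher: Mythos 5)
Your proof is correct and follows essentially the same route as the paper: lift the qgr-isomorphism to a shifted grmod-isomorphism via 1-criticality and \cite[Proposition 1.5]{smith1992the}, apply Corollary \ref{cor: fatpointisoclasses} in grmod, and descend through $\sigma^n$ using the commutativity of the $G$- and $\sigma$-actions from Lemma \ref{lem: actionscoincide} and \eqref{eq: sigmagcommute}. Your explicit justification of $(M_p^2)[n]_{\geq 0}\cong M_{\sigma^n(p)}^2$ as $A^{G,\mu}$-modules (via functoriality of restriction along the graded embedding into $M_2(A)$) is a detail the paper leaves implicit but which is correctly handled here.
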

\begin{proof}
By Corollary \ref{cor: fatpointisoclasses} one has $M_p^2 \cong M_{p^{g}}^2$ in $\text{grmod}(A^{G,\mu})$ for all $g \in
G$. Thus suppose that $\pi(M_p^2)\cong \pi(M_q^2)$ for some $p,q \in E$. As in the proof of Proposition \ref{prop: sklyaninfatpoints}, this implies the existence of $n \in \N$ such that $M_{p^{\sigma^{n}}}^2\cong M_{q^{\sigma^{n}}}^2$.
Corollary \ref{cor: fatpointisoclasses} then tells us that $\sigma^n(p)$ and $\sigma^n(q)$ lie in the same $G$-orbit. By
\eqref{eq: sigmagcommute} and Lemma \ref{lem: actionscoincide} the actions of $G$ and $\sigma$ on $E$ commute, therefore
$p$ and $q$ lie in the same $G$-orbit.
\end{proof}

As $B$ is a $G$-graded subring of its graded quotient ring, we can twist both rings simultaneously and use the invariant
ring construction to see that
\begin{equation}\label{eq: twistthcrinvariant}
B^{G,\mu}=M_2(B)^G \hookrightarrow M_2(k(E)[z,z^{-1};\sigma])^G = M_2(k(E))^G[z,z^{-1};\tilde{\tau}],
\end{equation}
where $\tilde{\tau}$ denotes the induced action of $\sigma$. Here we have used the fact that $G$ acts trivially on $z$.

We now state our main theorem. 
\begin{theorem}\label{theorem: geomdescthcrtwist}
There is an isomorphism of $k$-algebras
\begin{equation*}
B^{G,\mu} \cong B(\mathcal{E},\mathcal{B}_1,\tilde{\tau})=\bigoplus_{n \in \N} H^0(E^G,\mathcal{B}_n)z^n \subset
M_2(k(E))^G[z,z^{-1};\tilde{\tau}], 
\end{equation*}
for the following geometric data:
\begin{itemize}
 \item[(i)] an elliptic curve $E^G := E/G$, with $\tau$ the action induced by $\sigma$, along with the morphism of
curves $r:E \rightarrow E^G$ sending $p \mapsto [p]$;
 \item[(ii)] $\mathcal{E}=M_2(r_{\ast}\calO_E)^G$, a sheaf of $\mathcal{O}_{E^{G}}$-orders inside $M_2(k(E))^G$ on which
the automorphism $\tilde{\tau}$ acts. This automorphism restricts to the induced action of $\tau$ on $k(E^G)$;
 \item[(iii)] $\mathcal{B}_1=M_2(r_{\ast}\calL)^G$, an invertible lattice in $M_2(k(E))^G$, where $\calL$ is as defined
after \eqref{eq: 4sklyanincentre}.
\end{itemize}
\end{theorem}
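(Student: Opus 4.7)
The plan is to realize the embedding \eqref{eq: twistthcrinvariant} as identifying the degree-$n$ piece of $B^{G,\mu}$ with $H^0(E^G, \mathcal{B}_n)$ for a suitable sheaf $\mathcal{B}_n$ on $E^G$, and then to recognise the resulting graded ring as the twisted ring $B(\mathcal{E}, \mathcal{B}_1, \tilde{\tau})$ of Definition \ref{defn: twistedhomring}.

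First I would nail down the geometric setup. By Corollary \ref{cor: orbit4points} the group $G$ acts freely on $E$, and by \eqref{eq: sigmagcommute} together with Lemma \ref{lem: actionscoincide} the actions of $G$ and $\sigma$ commute. Consequently $r: E \to E^G = E/G$ is a Galois \'etale cover of degree $4$, and Riemann--Hurwitz forces $E^G$ to be elliptic. The automorphism $\sigma$ then descends to an automorphism $\tau$ of $E^G$ of infinite order, and the automorphism $\tilde{\tau}$ of $M_2(k(E))^G$ appearing in \eqref{eq: twistthcrinvariant} restricts on $k(E^G) \subset M_2(k(E))^G$ to $\tau$. Since $r$ is finite \'etale and $\mathrm{char}(k)\nmid |G|$, the pushforward $r_*\mathcal{O}_E$ is a locally free $\mathcal{O}_{E^G}$-module of rank $4$ carrying a $G$-action, so $\mathcal{E} = M_2(r_*\mathcal{O}_E)^G$ is a coherent sheaf of $\mathcal{O}_{E^G}$-orders inside $M_2(k(E))^G$; the same reasoning applied to $\mathcal{L}$ produces the coherent lattice $\mathcal{B}_1 = M_2(r_*\mathcal{L})^G$.

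The identification of graded pieces then runs as follows. The $n$-th graded piece of $B^{G,\mu}$, sitting inside $M_2(k(E))^G z^n$ via \eqref{eq: twistthcrinvariant}, is exactly $M_2(H^0(E, \mathcal{L}_n))^G$, using $B_n = H^0(E, \mathcal{L}_n) \subset k(E)z^n$ and that $G$ acts trivially on $z$. Since $r$ is finite we have $H^0(E, \mathcal{F}) = H^0(E^G, r_*\mathcal{F})$, and since $|G|$ is invertible in $k$ the functor $(-)^G$ is exact and commutes with global sections; thus $M_2(H^0(E, \mathcal{L}_n))^G \cong H^0(E^G, M_2(r_*\mathcal{L}_n)^G)$. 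It then remains to identify the sheaf $M_2(r_*\mathcal{L}_n)^G$ with the lattice product $\mathcal{B}_1 \cdot \mathcal{B}_1^{\tilde{\tau}} \cdots \mathcal{B}_1^{\tilde{\tau}^{n-1}}$ inside $M_2(k(E))^G$, and to check that multiplication in the Ore extension $M_2(k(E))^G[z, z^{-1}; \tilde{\tau}]$ restricts to the bimodule product $\mathcal{B}_i \cdot \mathcal{B}_j^{\tilde{\tau}^i} = \mathcal{B}_{i+j}$ defining $B(\mathcal{E}, \mathcal{B}_1, \tilde{\tau})$.

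The main obstacle is this last step, together with the identification of left and right orders $E(\mathcal{B}_1) = \mathcal{E}$ and $E'(\mathcal{B}_1) = \mathcal{E}^{\tilde{\tau}}$. These are local problems on $E^G$: \'etale-locally $r_*\mathcal{F}$ decomposes as a direct sum indexed by $G$ for any $G$-equivariant coherent $\mathcal{F}$ on $E$, and combined with the isomorphism $kG_\mu \cong M_2(k)$ from \eqref{eq: diagmat} this lets one pass from the product $\mathcal{B}_i \cdot \mathcal{B}_j^{\tilde{\tau}^i}$ taken inside $M_2(k(E))^G$ to the familiar identity $\mathcal{L}_i \cdot \mathcal{L}_j^{\sigma^i} = \mathcal{L}_{i+j}$ inside $k(E)$ that already underlies the description of $B$. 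The delicate part is keeping track of both the $G$-action and the matrix structure so that the local and global pictures match, and in particular so that $\mathcal{B}_1$ is locally free of rank $1$ over $\mathcal{E}$ on the left and over $\mathcal{E}^{\tilde{\tau}}$ on the right. Once these identifications are in place, ampleness of $\{\mathcal{B}_n\}$ follows from ampleness of $\{\mathcal{L}_n\}$ via the projection formula for the finite \'etale map $r$, and the isomorphism $B^{G,\mu} \cong B(\mathcal{E}, \mathcal{B}_1, \tilde{\tau})$ drops out.
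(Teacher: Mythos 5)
Your outline differs from the paper's proof in the key step, and the difference is instructive but also the source of a genuine gap.

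You propose to show the sheaf identity $M_2(r_*\mathcal{L}_n)^G = \mathcal{B}_1\cdot\mathcal{B}_1^{\tilde{\tau}}\cdots\mathcal{B}_1^{\tilde{\tau}^{n-1}}$ directly, by working \'etale-locally and exploiting the decomposition of $r_*\mathcal{F}$ together with $kG_\mu\cong M_2(k)$. You flag this step as ``delicate'' and never actually carry it out --- and it \emph{is} the hard part, because you must show that $M_2(\mathcal{L}(V))^G\cdot M_2(\mathcal{L}^\sigma(V))^G$ is all of $M_2(\mathcal{L}\mathcal{L}^\sigma(V))^G$, not merely contained in it. Your proposal does not supply an argument for this, and the remaining claims (identifying the left and right orders of $\mathcal{B}_1$ with $\mathcal{E}$, ampleness via the projection formula) are asserted rather than proved.

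The paper sidesteps the sheaf equality entirely. It first establishes the order identifications $\mathcal{E}=E(\mathcal{B}_1)=E'(\mathcal{B}_1)$ not by local computation but by showing $\mathcal{E}$ is a sheaf of Dedekind prime rings (HNP with no idempotent maximal ideals, using the X-outer action of $G$), hence a sheaf of \emph{maximal} orders; maximality then forces the left and right orders of any lattice containing it on both sides to coincide with $\mathcal{E}$. For the ring isomorphism itself the paper proves only the easy one-sided inclusion of sheaves, $\mathcal{B}_n\subseteq M_2(r_*\mathcal{L}_n)^G$. Combined with the fact that $B^{G,\mu}$ is generated in degree $1$ (Lemma~\ref{lem: twistthcr}), which gives an embedding $B^{G,\mu}\hookrightarrow B(\mathcal{E},\mathcal{B}_1,\tilde{\tau})$ in each degree, taking global sections of the one-sided inclusion produces the reverse containment at the level of vector spaces. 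This squeeze avoids ever proving equality of sheaves and also renders your projection-formula argument for ampleness unnecessary (the paper derives ampleness \emph{from} the theorem, in Remark~\ref{rem: ample}).

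So your approach is plausible in spirit --- the local picture you describe is almost certainly correct, and if you could flesh out the computation that $(-)^G$ applied to $M_2(-)$ turns the product of line bundle restrictions into the lattice product on the nose, you would in fact prove a slightly stronger statement than the paper does. But as written, the central identification is deferred rather than proved, which is a real gap; and you miss the generation-in-degree-$1$ trick that makes the paper's proof work with much less local sheaf bookkeeping.
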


Since the proof of Theorem \ref{theorem: geomdescthcrtwist} is quite technical, we will first prove several preliminary
lemmas. Note that the sheaf $\mathcal{E}$ is defined on an open set $U \subseteq E^G$, with $V=r^{-1}(U)$, by
\begin{equation*}
\mathcal{E}(U)=\left[M_2(r_{\ast}\calO_E)^G\right](U)=M_2((r_{\ast}\calO_E)(U))^G = M_2(\calO_E(V))^G. 
\end{equation*}

Before stating the first lemma needed to prove the main theorem, recall that $\sigma$ is given by translation by a point of infinite order on $E$.
\begin{lemma}\label{lem: ellcurve}
Define $E^G:= E/G$, the orbit space of $E$ under the action of $G$. Then $E^G$ is a smooth elliptic curve, with an
associated automorphism $\tau$ that is induced by $\sigma$. Furthermore, $\tau$ has infinite order and does not fix any
points.
\end{lemma}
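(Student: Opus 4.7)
The plan is to first show that the action of $G$ on $E$ is free, then deduce that the quotient is a smooth projective curve of genus one, and finally analyze the descended automorphism.

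For the freeness, I would use \eqref{eq: Gactonpoints}: if $p = (p_0, p_1, p_2, p_3) \in E$ is fixed by some $g \neq e$ in $G$, then two of the coordinates of $p$ must vanish (because $\text{char}(k) \neq 2$ and each nontrivial $g$ negates two coordinates). But by Lemma \ref{lem: threegensannihilate}, every point on $E$ has at least three non-zero coordinates, so no such fixed point exists. Equivalently, Corollary \ref{cor: orbit4points} already shows every $G$-orbit on $E$ has exactly $|G|=4$ points, so the action is set-theoretically (hence scheme-theoretically, in characteristic coprime to $|G|$) free.

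Since $G$ is a finite group acting freely on the smooth projective curve $E$, the geometric quotient $E^G := E/G$ exists as a smooth projective curve and $r : E \to E^G$ is an étale Galois cover of degree $4$. Applying the Riemann--Hurwitz formula with trivial ramification gives
\begin{equation*}
2g(E) - 2 \;=\; |G|\bigl(2g(E^G) - 2\bigr),
\end{equation*}
so $0 = 4(2g(E^G) - 2)$ and $g(E^G) = 1$. Choosing (for instance) the image $r(o)$ of the origin of $E$ as a base point, $E^G$ becomes an elliptic curve.

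Because the actions of $G$ and $\sigma$ on $E$ commute (by Lemma \ref{lem: actionscoincide} combined with \eqref{eq: sigmagcommute}), $\sigma$ descends to a well-defined automorphism $\tau$ of $E^G$, characterised by $\tau \circ r = r \circ \sigma$. It remains to verify the two assertions about $\tau$, and this is where a little care is needed. Suppose $\tau^n = \mathrm{id}_{E^{G}}$ for some $n \geq 1$. Then for every $p \in E$ the point $\sigma^n(p)$ lies in the $G$-orbit of $p$, so there exists a map $f : E \to G$ with $\sigma^n(p) = p^{f(p)}$. Since $G$ is discrete and $E$ is connected, $f$ is constant, say $f \equiv g$. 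Then $\sigma^{2n}(p) = p^{g^{2}} = p$ for all $p$ (as $G$ is the Klein four-group), contradicting Hypothesis \ref{hypsing: siginforder}; hence $\tau$ has infinite order. Finally, if $\tau$ fixed a point $[p] \in E^G$, then $\sigma(p) = p^g$ for some $g \in G$, giving $\sigma^2(p) = p^{g^{2}} = p$. But $\sigma$ is translation by a point of infinite order on the elliptic curve $E$, so $\sigma^2$ is a nontrivial translation and has no fixed points, the final contradiction. The main obstacle is this last step: one must carefully exploit both the fact that every element of $G$ has order dividing $2$ and the translation description of $\sigma$ in order to convert a fixed point of $\tau$ (or a finite-order relation for $\tau$) into a finite-order relation for $\sigma$.
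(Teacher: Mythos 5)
Your proof is correct but structured quite differently from the paper's. The paper establishes the properties of $\tau$ \emph{first} (no fixed points, infinite order, via the observation that a finite orbit of $\tau$ would force a finite orbit of $\sigma$), and then uses those properties to deduce smoothness: the singular locus of $E^G$ is finite and $\tau$-invariant, so non-emptiness would yield a finite $\tau$-orbit, a contradiction. It then applies Hurwitz in the general (possibly ramified) form to get $g(E^G) \in \{0,1\}$, and rules out genus $0$ by noting that every automorphism of $\proj{k}{1}$ has a fixed point, while $\tau$ does not. You instead begin by proving that the $G$-action on $E$ is free — a fact the paper has already effectively established in Corollary \ref{cor: orbit4points} but does not invoke here — and then obtain smoothness and genus $1$ in one stroke: free quotient by a finite group of order prime to the characteristic is an \'etale Galois cover, so Riemann--Hurwitz with trivial ramification gives $g(E^G)=1$ directly, with no need for the genus-$0$ exclusion. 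Your treatment of the order and fixed-point claims is also a little more careful than the paper's: you make explicit that the potential relation $\sigma^n(p)=p^{f(p)}$ has $f$ locally constant (hence constant on the connected curve $E$), and that squaring kills the group element because $G$ has exponent $2$; the paper compresses this into ``$[p]$ contains $4$ or fewer points and $\sigma$ is given by translation.'' Both approaches are sound; yours leverages freeness to get a cleaner deduction of smoothness and genus, while the paper's approach is self-contained in that it never needs to invoke the scheme-theoretic free-quotient machinery.

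One small remark: when you assert that $f : E \to G$ with $\sigma^n(p) = p^{f(p)}$ is constant ``since $G$ is discrete and $E$ is connected,'' it is worth spelling out why $f$ is locally constant — for each $g$ the set $\{p \in E : \sigma^n(p) = p^g\}$ is Zariski-closed (an equalizer of two morphisms $E \to E$), and these $|G|$ closed sets partition $E$ because the action is free, so by connectedness exactly one of them is all of $E$. This is the same style of argument the paper implicitly uses, just stated more explicitly.
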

\begin{proof}
The orbit space $E^G$ is the curve associated to the fixed field $k(E)^G$, which has transcendence degree 1 over $k$. In
particular we have $k(E^G)=k(E)^G$. As remarked after \eqref{eq: sigmagcommute}, the actions of $G$ and $\sigma$ commute
on $E$. One may therefore conclude that there is an induced action of $\sigma$ on $E^G$. Denote this map by $\tau$,
which is defined by $[p]^{\tau}:=[p^{\sigma}]$ for all $p \in E$, and suppose that $\tau$ has a fixed point $[p]$. Since
$[p]$ contains 4 or fewer points and $\sigma$ is given by translation, this implies that $\sigma$ has finite order which
is a contradiction. A similar argument proves that $\tau$ has infinite order. 

Let us now show that $E^G$ is smooth. The singular locus of $E^G$ must be finite and preserved by $\tau$. If it were
non-empty then this would imply that $\sigma$ has finite order, which is a contradiction. One can now apply Hurwitz's
Theorem \cite[Chapter IV, Corollary 2.4]{hartshorne1977algebraic} to see that $E^G$ has genus 0 or 1; if it had genus 0
then it would be birationally equivalent to \proj{k}{1}, whose automorphisms always fix at least one point. Thus $E^G$
must have genus 1 and hence be an elliptic curve. 
\end{proof}

Before our next lemma we need to define outer and $X$-outer actions of a group. Our statement of the latter property is
given for prime Goldie rings, in which case \cite[Examples 3.6 and 3.7]{montgomery1980fixd} allow us to give the
formulation below.
\begin{defn}\label{def: Xouter}
Let $G$ be a finite group acting by ring automorphisms on a ring $R$. We say that $G$ is \emph{outer} if no nontrivial element of $G$ acts by conjugation by an element of $R$. If, in addition, we assume that $R$ is
prime Goldie, we say that $G$ is \emph{X-outer} on $R$ if it is outer when extended to the
Goldie quotient ring $Q(R)$.
\end{defn}
\begin{lemma}\label{lem: Xouter}
The group $G$ is outer on $M_2(k(E))$ and therefore X-outer on the ring $M_2(\calO_E(V))$ for each $G$-invariant affine
open set $V \subseteq E$. Consequently, $M_2(\calO_E(V))^G$ is prime.
\end{lemma}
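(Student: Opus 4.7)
The plan is to prove the three assertions in order, with the main work devoted to the first.

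First, I would make the action of $G$ on $M_2(k(E))$ completely explicit by using $M_2(k(E)) \cong k(E) \otimes_k M_2(k)$ and the identification $M_2(k) \cong kG_{\mu}$ from \eqref{eq: diagmat}. Under this decomposition the action of $g \in G$ is diagonal: on the first factor $k(E)$ it is the pullback $\sigma_g$ coming, via Lemma \ref{lem: actionscoincide}, from the induced $G$-action on $E$; on the second factor $M_2(k)$ it is conjugation by the matrix listed in \eqref{eq: diagmat} representing $g$. Now any inner automorphism of $M_2(k(E))$ fixes the centre $Z(M_2(k(E))) = k(E)\cdot I$ pointwise. Hence if some nontrivial $g \in G$ acted as $\operatorname{Ad}(u)$ for an $u \in \operatorname{GL}_2(k(E))$, then $g$ would have to fix $k(E)$ pointwise. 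But by Corollary \ref{cor: orbit4points} every $G$-orbit on $E$ has exactly four points, so the $G$-action on $E$, and hence on $k(E)$, is faithful. This contradiction shows that no nontrivial $g \in G$ acts innerly on $M_2(k(E))$.

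Second, let $V \subseteq E$ be a $G$-invariant affine open set. Since $E$ is a smooth curve, $\calO_E(V)$ is a Dedekind domain with field of fractions $k(E)$, so $M_2(\calO_E(V))$ is a prime Goldie ring whose simple artinian Goldie quotient ring is precisely $M_2(k(E))$. The $G$-action on $M_2(\calO_E(V))$ extends uniquely to the Goldie quotient and coincides there with the action analysed in the first step. Being outer on the quotient ring is, by definition, X-outerness on $M_2(\calO_E(V))$, so the second claim follows.

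Third, primeness of $M_2(\calO_E(V))^G$ is then a direct invocation of standard fixed-ring theory: since $M_2(\calO_E(V))$ is prime, $\operatorname{char}(k)\nmid |G|$, and $G$ acts X-outerly, a classical theorem of Montgomery (see for instance \cite[\S 3]{montgomery1980fixd}) guarantees that the fixed ring is prime.

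The essential obstacle is the faithfulness step underpinning outerness on $M_2(k(E))$; once the observation that inner automorphisms fix the centre is combined with Corollary \ref{cor: orbit4points}, the remaining two steps are routine applications of Goldie theory and the standard behaviour of X-outer actions on prime rings.
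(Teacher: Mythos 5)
Your proof is correct but takes a genuinely different route to the central claim, that $G$ is outer on $M_2(k(E))$. The paper argues by explicit matrix computation: using that each $G$-graded component of $k(E)$ is non-empty, it picks $y \in k(E)_{g_2} \setminus \{0\}$, computes the action of $g_1$ on $\left(\begin{smallmatrix} y & 0 \\ 0 & 0 \end{smallmatrix}\right)$, and shows by solving the resulting matrix equation that no conjugation by an element of $M_2(k(E))$ can reproduce that effect, then repeats the analysis for $g_2$ and $g_1 g_2$ with a suitable $y' \in k(E)_{g_1}$. You instead observe that any inner automorphism of $M_2(k(E))$ fixes the centre $k(E)\cdot I$ pointwise, while the restriction of the $G$-action to the centre is just the action on $k(E)$, which is faithful because Corollary \ref{cor: orbit4points} rules out any nontrivial element of $G$ acting trivially on $E$ (and a nontrivial automorphism of $E$ induces a nontrivial automorphism of $k(E)$). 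Your argument is shorter, uniform over all nontrivial group elements, and isolates the structural reason for outerness: the centre is the fixed locus of inner automorphisms but carries a faithful $G$-action. The remaining two steps of your proposal --- deducing X-outerness on $M_2(\calO_E(V))$ via the Goldie quotient ring, and primeness of the fixed ring via Montgomery's fixed-ring theorem --- coincide with the paper's proof.
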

\begin{proof}
Consider the action of $G$ on $M_2(k)$ given in \cite[Equation (1.4)]{davies2014cocycle1}. As the action of $G$ on $B_1$ affords the regular representation, each graded component of $k(E)$ under the induced $G$-grading is non-empty. Hence we can find a non-zero element $y \in k(E)_{g_{2}}$, in which case:
\begin{equation*}
\begin{pmatrix} y & 0 \\ 0 & 0 \end{pmatrix}^{g_{1}}=\begin{pmatrix} -y & 0 \\ 0 & 0 \end{pmatrix}.
\end{equation*}

Suppose that this action were given by conjugation by $\left( \begin{smallmatrix} a&b\\ c&d \end{smallmatrix} \right)
\in M_2(k(E))$:
\begin{gather}
\begin{aligned}\label{eq: conjg1calc}
\begin{pmatrix} -y & 0 \\ 0 & 0 \end{pmatrix} 
&= \frac{1}{ad-bc} \begin{pmatrix} a & b \\ c & d \end{pmatrix} \begin{pmatrix} y & 0 \\ 0 & 0 \end{pmatrix}
\begin{pmatrix} d & -b \\ -c & a \end{pmatrix}  \\
&= \frac{1}{ad-bc} \begin{pmatrix} ad y & -ab y \\ c d y & -bc y \end{pmatrix}.
\end{aligned}
\end{gather}
In order for the correct entries of the matrix to vanish we would need $b=c=0$. But then \eqref{eq: conjg1calc} would contradict $y \neq 0$.

We must repeat these calculations for the other two non-identity elements in the group. For $g=g_2$ or $g_1g_2$, choose
a non-zero element $y' \in k(E)_{g_{1}}$. In both cases we have
\begin{equation*}
\begin{pmatrix} y' & 0 \\ 0 & 0 \end{pmatrix}^{g}= \begin{pmatrix} 0 & 0 \\ 0 & -y' \end{pmatrix}.
\end{equation*}
A similar argument to that above shows that neither element can act by conjugation by an element in $M_2(k(E))$.

Now consider a $G$-invariant affine open set $V \subseteq E$ and $M_2(\calO_E(V))$. As $\calO_E(V)$ is a domain it is
clear that $M_2(\calO_E(V))$ is prime. Furthermore, it has Goldie quotient ring $M_2(k(E))$, on which the action of $G$
is outer by the argument above. Thus the action of $G$ on $M_2(\calO_E(V))$ is X-outer, whereupon we can apply \cite[Theorem 3.17(2)]{montgomery1980fixd} to show that $M_2(\calO_E(V))^G$ is a prime ring.
\end{proof}

We now turn to the sheaf of orders which will be used to describe $B^{G,\mu}$. 
\begin{lemma}\label{lem: maxorder}
Define $\mathcal{E}:=M_2(r_{\ast}\calO_E)^G$, which is considered as a subsheaf of the constant sheaf $M_2(k(E))^G$.
Then
\begin{itemize}
\item[(i)] the natural action of $\sigma$ on $M_2(k(E))$ restricts to an automorphism $\tilde{\tau}$ on $\mathcal{E}$,
with $\tilde{\tau}$ restricting to the induced action of $\tau$ on $k(E^G)$;
\item[(ii)] $\mathcal{E}$ is a sheaf of Dedekind prime rings and therefore a sheaf of maximal orders.
\end{itemize}
\end{lemma}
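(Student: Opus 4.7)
The plan for part (i) is to exploit the fact that the $G$-action on $M_2(k(E))$ splits into two commuting pieces: the entry-wise $G$-action on $k(E)$ (which commutes with $\sigma$ by the relation \eqref{eq: sigmagcommute} together with Lemma \ref{lem: actionscoincide}), and conjugation by the scalar matrices of $M_2(k)$ from \eqref{eq: diagmat} (which commutes with the entry-wise extension of $\sigma$ because $\sigma$ fixes the constants). Consequently, applying $\sigma$ entry-wise preserves $G$-invariants and defines a ring automorphism $\tilde{\tau}$ of $M_2(k(E))^G$. To promote this to a sheaf automorphism, for any open $U \subseteq E^G$ with $V = r^{-1}(U)$ the identity $r \circ \sigma = \tau \circ r$ (from Lemma \ref{lem: ellcurve}) gives $\sigma^{-1}(V) = r^{-1}(\tau^{-1}(U))$, so $\tilde{\tau}$ carries $\mathcal{E}(U) = M_2(\mathcal{O}_E(V))^G$ into $M_2(\mathcal{O}_E(\sigma^{-1}V))^G = \mathcal{E}(\tau^{-1}U)$. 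Finally, $k(E^G) = k(E)^G$ embeds in $M_2(k(E))^G$ as scalar matrices, and on scalar matrices $\tilde{\tau}$ acts as $\sigma|_{k(E)^G} = \tau$, verifying the restriction claim.

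For part (ii) the approach is local: cover $E^G$ by affine opens $U$ such that $V := r^{-1}(U)$ is an affine $G$-invariant open in $E$, which is available because $r$ is a finite morphism of smooth curves (in fact étale, since by \eqref{eq: Gactonpoints} and Lemma \ref{lem: threegensannihilate} the $G$-action on $E$ is free). Writing $R := \mathcal{O}_E(V)$, which is a Dedekind domain, $M_2(R)$ is Morita equivalent to $R$ and hence is itself a Dedekind prime ring. Lemma \ref{lem: Xouter} already gives both X-outerness of the $G$-action on $M_2(R)$ and primeness of the fixed subring $\mathcal{E}(U) = M_2(R)^G$. The remaining Dedekind-prime properties (noetherian, hereditary, maximal order in $M_2(k(E))^G$) then follow from standard descent theorems for X-outer finite group actions whose order is invertible in the base ring: noetherianity and heredity via the trace-averaging argument and the classical fixed-ring theorems in \cite{montgomery1980fixd}, and the maximal-order property because the trace provides a $G$-equivariant projection from any overorder of $M_2(R)^G$ in $M_2(k(E))^G$ back into $M_2(R)$, which is itself a maximal order. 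Since a Dedekind prime ring is by definition (or one of the equivalent characterisations) a hereditary noetherian prime maximal order, both halves of (ii) are established simultaneously.

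The hard part is assembling the three separate ring-theoretic descent statements in (ii), in particular the maximal-order property, which is the least immediate of the three but is precisely the one needed to match the hypotheses of Artin--Stafford. Each descent statement is in the literature on fixed rings under X-outer actions, so the task is largely one of citation and bookkeeping; the passage from section-wise statements to a sheaf-theoretic conclusion is then routine, as Dedekind primeness and the maximal-order property can be checked on any sufficient basis of affine opens of $E^G$.
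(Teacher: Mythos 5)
Part (i) is fine and is essentially the paper's argument (the paper simply notes that entry-wise $\sigma$ commutes with $G$ and so restricts to $\mathcal{E}$; your unpacking of the two commuting pieces and the sheaf bookkeeping is correct).

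For part (ii), however, the maximal-order step has a genuine gap, and it is the step the whole lemma turns on. Your assertion that ``the trace provides a $G$-equivariant projection from any overorder of $M_2(R)^G$ in $M_2(k(E))^G$ back into $M_2(R)$'' does not hold up: an overorder $S$ already sits inside the fixed ring $M_2(k(E))^G$, so the trace $\frac{1}{|G|}\sum_{g\in G} g$ acts as the identity on it and gives no information; and the set $S\cdot M_2(R)$ generated by $S$ and $M_2(R)$ is not obviously a ring, nor obviously an equivalent order, so one cannot simply ``push $S$ up to $M_2(R)$ and use maximality there''. There is no standard descent theorem that fixed rings of maximal orders under $X$-outer actions are maximal orders, and I do not see how to repair the trace argument. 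The paper's proof is fundamentally different and, crucially, \emph{not} local: it first shows each $\mathcal{E}(U)$ is hereditary noetherian prime (by viewing it as a cocycle twist $\mathcal{O}_E(V)^{G,\mu}$ and citing the twist-preservation results of \cite{davies2014cocycle1}), and then rules out nontrivial idempotent ideals by a \emph{global} argument: covering $E^G$ by finitely many affines gives at most finitely many idempotent maximal ideals among all the stalks of $\mathcal{E}$, while the fact that $\tau$ has infinite order on $E^G$ (Lemma \ref{lem: ellcurve}) would force infinitely many such ideals if even one existed. This infinite-$\tau$-orbit argument cannot be carried out affine-by-affine.

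That said, your observation that the $G$-action on $E$ is free (Corollary \ref{cor: orbit4points}), so that $r\colon E\to E^G$ is finite \'etale, points toward a valid and arguably slicker alternative that you did not actually write down: since $\mathcal{O}_E(V)$ is a Galois \'etale extension of $\mathcal{O}_{E^G}(U)$, the fixed ring $M_2(\mathcal{O}_E(V))^G$ is a projective $\mathcal{O}_{E^G}(U)$-module of rank $4$ whose closed fibers are all isomorphic to $M_2(k)$, hence $\mathcal{E}$ is a sheaf of Azumaya $\mathcal{O}_{E^G}$-algebras, and Azumaya algebras over a Dedekind domain are maximal orders. If you want to pursue your local strategy, that is the argument to make; the trace argument as stated should be discarded.
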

\begin{proof}
The automorphism $\sigma$ extends naturally from $k(E)$ to $M_2(k(E))$ under the trivial action on the matrix units.
Since this automorphism commutes with $G$, it restricts to an automorphism $\tilde{\tau}$ on $\mathcal{E}$. It follows
from the construction that the action of $\tilde{\tau}$ on $k(E)^G$ coincides with that of $\tau$.

To prove (ii), let $U \subseteq E^G$ be an affine open set with $V=r^{-1}(U)$. As $V \subseteq E$ is $G$-invariant, $\mathcal{E}(U)= M_2(\calO_E(V))^G$ is prime by Lemma \ref{lem: Xouter}. Since $G$ acts on
$\calO_E(V)$ by $k$-algebra automorphisms, one can view $\mathcal{E}(U)$ as a cocycle twist of the form
$\calO_E(V)^{G,\mu}$, where $G$ and $\mu$ are the same group and 2-cocycle as used to twist $B$ respectively. Note that $\calO_E(V)$ is noetherian and has global
dimension 1 since $E$ is a smooth curve, therefore we may apply \cite[Corollary 4.11 and Proposition 4.14]{davies2014cocycle1} to see that $\mathcal{E}(U)$ is a hereditary noetherian prime (HNP) ring.

We will now show that $\mathcal{E}(U)$ contains no idempotent maximal ideals, which by \cite[Proposition
5.6.10]{mcconnell2001noncommutative} will imply that it contains no nontrivial idempotent ideals. The existence of
idempotent maximal ideals is a local condition: given a sheaf $\mathcal{M}$ of maximal ideals of $\mathcal{E}$, the
sheaf $\mathcal{E}/\mathcal{M}$ is supported at a single point $p \in E^G$, and the stalk $\mathcal{M}_p$ is an
idempotent maximal ideal of $\mathcal{E}_p$ if and only if $\mathcal{M}(V)$ is an idempotent maximal ideal of
$\mathcal{E}(V)$ for an affine open set $V \ni p$.

Let $\bigcup_{i=1}^n U_i$ be an affine open cover of $E^G$ for some $n \in \N$, which gives rise to a corresponding
cover $\bigcup_{i=1}^n V_i$ of $E$ by $G$-invariant open sets with $V_i = r^{-1}(U_i)$. As the rings $\mathcal{E}(U_i)
\subseteq M_2(k(E))$ are PI and noetherian, they can have at most finitely many idempotent maximal ideals by \cite[Theorem
13.7.15]{mcconnell2001noncommutative}. By the remarks of the preceding paragraph, there can be at most finitely many
idempotent maximal ideals in the totality of the stalks of $\mathcal{E}$. We know that $\tau$ has no finite orbits on
$E^G$. This means that if a stalk $\mathcal{E}_p$ contains an idempotent maximal ideal $\mathcal{M}_p$, then there are
infinitely many other stalks with such an ideal, of the form $\tilde{\tau}^n(\mathcal{M}_p)$ for $n \in \N$. This is a
contradiction.

Now we may apply \cite[Theorem 5.6.3]{mcconnell2001noncommutative} to $\mathcal{E}(U)$: it contains no nontrivial
idempotent ideals and therefore it is a Dedekind prime ring. In particular, condition (ii) of Theorem 5.2.10 op. cit.
implies that $\mathcal{E}(U)$ is a maximal order.
\end{proof}

\begin{lemma}\label{lem: twistedring}
Define $\mathcal{B}_1:=M_2(r_{\ast}\calL)^G$ which, like $\mathcal{E}$, is considered as a subsheaf of the constant
sheaf $M_2(k(E))^G$. Then 
\begin{itemize}
 \item[(i)] $\mathcal{B}_1$ is an invertible $\calO_{E^G}$-lattice;
 \item[(ii)]  $\mathcal{E}$ is coherent sheaf of maximal $\calO_{E^G}$-orders for which
$\mathcal{E}=\mathcal{E}^{\tilde{\tau}}$;
 \item[(iii)] $\mathcal{E}= E(\mathcal{B}_1) = E'(\mathcal{B}_1)$.
\end{itemize}
\end{lemma}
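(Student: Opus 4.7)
The plan is to prove the three assertions by combining the finiteness of the quotient map $r : E \to E^G$ with the identification of $\mathcal{E}$ as a sheaf of maximal orders from Lemma \ref{lem: maxorder}. Since $G$ is finite, $r$ is a finite morphism, so $r_{\ast}\calO_E$ and $r_{\ast}\calL$ are coherent $\calO_{E^{G}}$-modules; applying $M_2(-)$ preserves coherence, and passing to $G$-invariants does also, since $\text{char}(k) \nmid |G|$ makes the averaging idempotent available on affine preimages in $E$. This shows that $\mathcal{E}$ and $\mathcal{B}_1$ are coherent subsheaves of the constant sheaf $M_2(k(E))^G$.

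For (ii) the maximal-order property is exactly Lemma \ref{lem: maxorder}(ii), coherence has just been noted, and the identity $\mathcal{E}^{\tilde{\tau}} = \mathcal{E}$ follows from the compatibility $\tau \circ r = r \circ \sigma$ established in Lemma \ref{lem: ellcurve}. Indeed, $\sigma^{\ast} \calO_E = \calO_E$ since $\sigma$ is an automorphism of $E$, so $\tau^{\ast}(r_{\ast}\calO_E) = r_{\ast}(\sigma^{\ast}\calO_E) = r_{\ast}\calO_E$; as the actions of $G$ and $\sigma$ on $E$ commute by \eqref{eq: sigmagcommute} and Lemma \ref{lem: actionscoincide}, pullback along $\tilde{\tau}$ commutes with taking $M_2(-)^G$, yielding $\mathcal{E}^{\tilde{\tau}} = \mathcal{E}$.

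For (iii) the inclusion $\mathcal{E} \subseteq E(\mathcal{B}_1) \cap E'(\mathcal{B}_1)$ reflects that $\calO_E$ preserves $\calL$ on both sides inside the constant sheaf of rational sections, so $r_{\ast}\calO_E \cdot r_{\ast}\calL \subseteq r_{\ast}\calL$ and symmetrically on the right; passing to $M_2(-)^G$ gives $\mathcal{E}\mathcal{B}_1 \subseteq \mathcal{B}_1$ and $\mathcal{B}_1\mathcal{E} \subseteq \mathcal{B}_1$. The reverse inclusions follow because $E(\mathcal{B}_1)$ and $E'(\mathcal{B}_1)$ are $\calO_{E^{G}}$-orders in $T = M_2(k(E))^G$ containing $\mathcal{E}$, and $\mathcal{E}$ is a maximal order by (ii), so maximality forces equality.

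For (i) the first task is to check that $\mathcal{B}_1$ is an $\calO_{E^{G}}$-lattice in $T$. At the generic point of $E^G$ the sheaf $r_{\ast}\calL$ has stalk equal to the generic stalk of $\calL$, a rank-$1$ $k(E)$-vector space, so $\mathcal{B}_1 K = M_2(k(E))^G = T$ for $K = k(E^G) = k(E)^G$. For invertibility, observe that $\mathcal{B}_1$ is a coherent, torsion-free $\mathcal{E}$-module (being a subsheaf of $T$) whose generic rank matches that of $\mathcal{E}$; each stalk $\mathcal{B}_{1,p}$ is a finitely generated torsion-free module over the local hereditary Noetherian prime ring $\mathcal{E}_p$, which is automatically projective, and local-semilocality of $\mathcal{E}_p$ forces it to be free of rank $1$. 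The main obstacle will be this last step: carefully verifying that the stalks of $\mathcal{E}$ are local HNP rings whose rank-$1$ projectives are free, which I would handle by localising the conclusion of Lemma \ref{lem: maxorder} at each closed point of $E^G$ and appealing to the structure theory of HNP orders (in particular, triviality of the ideal class group in the local setting).
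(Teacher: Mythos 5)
Your proof is correct in substance and takes a genuinely different route from the paper in several places. For coherence of $\mathcal{E}$ you argue directly from finiteness of $r$ and the averaging idempotent; the paper instead first establishes $\mathcal{E}=E(\mathcal{B}_1)$ and then cites \cite[Lemma 1.10(i)]{artin2000semiprime}, which packages coherence as a formal consequence of being the endomorphism order of a lattice. For the lattice condition on $\mathcal{B}_1$ you use a generic-stalk computation, whereas the paper verifies it with an explicit list of four $k(E^G)$-linearly independent global sections $v_i z^{-1}$; both work, and your argument is arguably cleaner. For (iii), the paper is careful to invoke \cite[Lemma 3.1.12(i)]{mcconnell2001noncommutative} to show $E(\mathcal{B}_1)$ and $E'(\mathcal{B}_1)$ are \emph{equivalent} orders to $\mathcal{E}$, and only then uses maximality; you jump from ``containing $\mathcal{E}$'' to equality. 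This is not a fatal gap---any order that is a full $\mathcal{O}_{E^G}$-lattice in $T$ and contains $\mathcal{E}$ is automatically equivalent to it---but that step deserves to be said out loud (and you should note that $E(\mathcal{B}_1)$ is a full lattice, which follows from $\mathcal{B}_1$ being one). For invertibility, the paper uses \cite[Lemma 5.7.4]{mcconnell2001noncommutative} on the Dedekind prime sheaf $\mathcal{E}$ to get f.g.\ projectivity, and then concludes; you localise at closed points and argue via the ``local'' HNP structure. This is the piece you rightly flag as the main obstacle: the stalks $\mathcal{E}_p$ are not local in the commutative sense but semilocal orders over the DVR $\mathcal{O}_{E^G,p}$, so you should phrase the free-of-rank-1 conclusion as coming from the structure of maximal orders over complete or Henselian DVRs (or from the fact that f.g.\ projective modules over semilocal rings are free when they have constant rank). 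With those two points spelled out, your argument is a valid and somewhat more self-contained alternative to the paper's.
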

\begin{proof}
We first show that $\mathcal{B}_1$ is an $\calO_{E^G}$-lattice. To do this we need to prove that $M_2(k(E))^G =
\mathcal{B}_1 \cdot k(E^G)$, where it suffices to work at the level of the global sections of $\mathcal{B}_1$. Observe
that the embedding in \eqref{eq: twistthcrinvariant} sends $B^{G,\mu}_1$ to $M_2(\text{H}^0(E,\calL))^Gz$, which is
precisely $\text{H}^0(E^G,\mathcal{B}_1)z$ since
\begin{equation*}
\text{H}^0(E^G,\mathcal{B}_1) =  \left[M_2(r_{\ast}\calL)^G\right](E^G) = M_2(\calL(E))^G=M_2(\text{H}^0(E,\calL))^G.
\end{equation*} 
Thus $\text{H}^0(E^G,\mathcal{B}_1)$ contains the elements
\begin{equation}\label{eq: sorted}
\begin{pmatrix} x_0 & 0 \\ 0 & x_0 \end{pmatrix}z^{-1},\begin{pmatrix} x_{1} & 0 \\ 0 &
 -x_{1} \end{pmatrix}z^{-1},\begin{pmatrix} 0 & x_{2} \\ x_{2} & 0 \end{pmatrix}z^{-1},\begin{pmatrix} 0 & -x_3 \\ x_{3}
& 0 \end{pmatrix}z^{-1},
\end{equation}
where $x_0,x_1,x_2$ and $x_3$ are the degree 1 generators of $B$. The elements in \eqref{eq: sorted} are linearly
independent over $k(E^G)$. But $M_2(k(E))^G$ is a 4-dimensional vector space over $k(E^G)$, which implies that
$M_2(k(E))^G = \mathcal{B}_1 \cdot k(E^G)$ must hold and therefore $\mathcal{B}_1$ is an $\calO_{E^G}$-lattice.

Notice that $\mathcal{B}_1$ is an $\mathcal{E}$-module on the right since
\begin{equation*}
M_2(r_*\mathcal{L})^G\cdot M_2(r_*\mathcal{O}_E)^G \subseteq 
 M_2(r_*\mathcal{L}\cdot r_*\mathcal{O}_E)^G \subseteq M_2(r_*\mathcal{L})^G. 
\end{equation*}
In particular, one has $\mathcal{E} \subseteq E'(\mathcal{B}_1)$. One may argue in a similar manner for left modules to
see that $\mathcal{E} \subseteq E(\mathcal{B}_1)$. Applying \cite[Lemma 3.1.12(i)]{mcconnell2001noncommutative} shows
that the orders $E(\mathcal{B}_1)$ and $E'(\mathcal{B}_1)$ are equivalent to $\mathcal{E}$. However, we showed in Lemma
\ref{lem: maxorder} that $\mathcal{E}$ is a sheaf of maximal orders, therefore we must have $\mathcal{E}=
E(\mathcal{B}_1) = E'(\mathcal{B}_1)$. By \cite[Lemma 1.10(i)]{artin2000semiprime}, $\mathcal{E}$ must be a coherent
sheaf of $\calO_{E^G}$-modules. The equality $\mathcal{E}=\mathcal{E}^{\tilde{\tau}}$ follows from Lemma \ref{lem:
maxorder}(i). 

Finally, we show that $\mathcal{B}_1$ is invertible over $\mathcal{E}$. Note that $\mathcal{B}_1$ is generated by its global sections (since $\calL$ is), therefore $\mathcal{B}_1$ is a sheaf of f.g.\ modules over $\mathcal{E}$. Since
$\mathcal{E}$ is a sheaf of Dedekind prime rings by Lemma \ref{lem: maxorder}, \cite[Lemma 5.7.4]{mcconnell2001noncommutative} implies that $\mathcal{B}_1 \subseteq M_2(k(E))^G$ must be a sheaf of torsionfree, projective modules contained in the Goldie quotient ring of $\mathcal{E}$. These facts imply that it must be an invertible lattice.
\end{proof}

We are now in a position to prove Theorem \ref{theorem: geomdescthcrtwist}.
\begin{proofof}{Theorem \ref{theorem: geomdescthcrtwist}}
To begin, note that in Lemma \ref{lem: twistedring} we proved that $\mathcal{E}= E(\mathcal{B}_1) = E'(\mathcal{B}_1)$,
whereupon
\begin{equation}\label{eq: productsheaf}
\mathcal{B}_n = \mathcal{B}_1 \cdot \mathcal{B}_1^{\tilde{\tau}} \cdot \ldots \cdot \mathcal{B}_1^{\tilde{\tau}^{n-1}},
\end{equation}
for all $n \in \N$. As all sheaves involved lie inside $M_2(k(E))^G$, we may construct the twisted ring
$B(\mathcal{E},\mathcal{B}_1,\tilde{\tau})$ as a subring of the Ore extension $M_2(k(E))^G[z,z^{-1};\tilde{\tau}]$, as
described in the statement of the theorem.

The degree 1 piece of $B(\mathcal{E},\mathcal{B}_1,\tilde{\tau})$ is $\text{H}^0(E^G,\mathcal{B}_1)z =
M_2(\text{H}^0(E,\calL))^Gz$. Recall that in the proof of Lemma \ref{lem: twistedring} we observed that this is
precisely the image of $B_1^{G,\mu}$ under its embedding in the Ore extension. By Lemma \ref{lem: twistthcr} $B^{G,\mu}$ is generated in degree 1, therefore we obtain an embedding of $k$-algebras $B^{G,\mu} \hookrightarrow
B(\mathcal{E},\mathcal{B}_1,\tau)$. In particular, there is an injection of vector spaces $B_n^{G,\mu} \hookrightarrow
H_0(E^G,\mathcal{B}_n)z^n$ for all $n \in \N$. 

Let us now consider an open set $U \subseteq E^G$, with $V=r^{-1}(U)$. Since $\tau$ is induced by $\sigma$, we have
$r^{-1}(\tau^n(U)) = \sigma^n(V)$ for all $n \in \N$. Note that $\tilde{\tau}$ commutes with the action of $G$ on
$M_2(k(E))^G$, which is a consequence of $\sigma$ commuting with the group action. Observe finally that as $\mathcal{E}$
is a sheaf of maximal orders and $\mathcal{B}_1$ is an invertible lattice contained in $M_2(k(E))^G$, the tensor
products appearing in the definition of $\mathcal{B}_n$ are actual products inside $M_2(k(E))^G$. Thus
\begin{align*}
\mathcal{B}_n(U) &= \mathcal{B}_1(U)\cdot \mathcal{B}_1^{\tilde{\tau}}(U) \cdot  \ldots \cdot
\mathcal{B}_1^{\tilde{\tau}^{n-1}}(U)\\
&=  \left[M_2(r_{\ast}\calL)^G\right](U) \cdot \left[M_2((r_{\ast}\calL)^{\tau})^G\right](U) \cdot \ldots \cdot 
\left[M_2((r_{\ast}\calL)^{\tau^{n-1}})^G\right](U) \\
&= M_2(\calL(V))^G \cdot M_2(\calL(\sigma(V)))^G \cdot \ldots \cdot M_2(\calL(\sigma^{n-1}(V)))^G\\
&\subseteq M_2(\calL(V)\calL(\sigma(V)) \ldots \calL(\sigma^{n-1}(V)))^G\\
&= M_2(\calL\calL^{\sigma} \ldots \calL^{\sigma^{n-1}}(V))^G\\
&= \left[M_2(r_{\ast}\calL_n)^G\right](U).
\end{align*}

We therefore have an inclusion $\mathcal{B}_n \hookrightarrow M_2(r_{\ast}\calL_n)^G$, and by taking global sections one
has $\text{H}^0(E^G,\mathcal{B}_n) \subseteq  M_2(H^0(E,\calL_n))^G$. But since the latter vector space is precisely
$B^{G,\mu}_n$, this implies that the injection $B_n^{G,\mu} \hookrightarrow H_0(E^G,\mathcal{B}_n)z^n$ must be a
bijection, which completes the proof.
\end{proofof}
\begin{rem}\label{rem: ample}
Although we did not show during the proof of Theorem \ref{theorem: geomdescthcrtwist} that the sequence $\{\mathcal{B}_n\}$
is ample, it follows from that result as we now indicate. Since $B^{G,\mu}$ is semiprime noetherian of GK dimension 2,
the sheaves associated to its graded components form an ample sequence \cite[Proposition 6.4]{artin2000semiprime};
Theorem \ref{theorem: geomdescthcrtwist} shows that these sheaves coincide with the sequence $\{\mathcal{B}_n\}$.
\end{rem}

\begin{corollary}\label{cor: astfequivcat}
There is an equivalence of categories $\text{qgr}(B^{G,\mu}) \simeq \text{coh}(\mathcal{E})$, where
$\text{coh}(\mathcal{E})$ denotes the category of coherent sheaves over
$\mathcal{E}$.
\end{corollary}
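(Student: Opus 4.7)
The plan is to deduce the equivalence directly from the geometric identification in Theorem \ref{theorem: geomdescthcrtwist} by invoking the Artin--Stafford version of the noncommutative Serre's theorem. The classical statement says $\text{qgr}(B(Y,\mathcal{L},\sigma)) \simeq \text{coh}(Y)$ whenever $\mathcal{L}$ is $\sigma$-ample; Artin and Stafford's generalisation in \cite{artin2000semiprime} extends this to twisted rings built from a coherent sheaf of orders $\mathcal{E}$ together with an ample invertible lattice $\mathcal{B}_1$, yielding $\text{qgr}(B(\mathcal{E},\mathcal{B}_1,\tilde{\tau})) \simeq \text{coh}(\mathcal{E})$.

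First I would recall Theorem \ref{theorem: geomdescthcrtwist}, which provides a $k$-algebra isomorphism
\begin{equation*}
B^{G,\mu} \cong B(\mathcal{E},\mathcal{B}_1,\tilde{\tau}),
\end{equation*}
so $\text{qgr}(B^{G,\mu})$ and $\text{qgr}(B(\mathcal{E},\mathcal{B}_1,\tilde{\tau}))$ are canonically equivalent. Next I would assemble the hypotheses needed to apply the Artin--Stafford equivalence: Lemma \ref{lem: maxorder} gives that $\mathcal{E}$ is a coherent sheaf of maximal $\mathcal{O}_{E^G}$-orders inside $M_2(k(E))^G$ fixed by $\tilde{\tau}$; Lemma \ref{lem: twistedring} gives that $\mathcal{B}_1$ is an invertible lattice with $E(\mathcal{B}_1)=E'(\mathcal{B}_1)=\mathcal{E}$; and Remark \ref{rem: ample} supplies the ampleness of the sequence $\{\mathcal{B}_n\}$, using that $B^{G,\mu}$ is semiprime noetherian of GK dimension $2$ by Proposition \ref{prop: semiprime} and Theorem \ref{thm: thcrtwistprops}, and that the graded pieces of $B^{G,\mu}$ correspond to the $\mathcal{B}_n$.

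Finally I would apply the noncommutative Serre's theorem in the form given by Artin and Stafford (see \cite[\S 2--3]{artin2000semiprime}), which under these hypotheses produces the desired equivalence
\begin{equation*}
\text{qgr}(B(\mathcal{E},\mathcal{B}_1,\tilde{\tau})) \simeq \text{coh}(\mathcal{E}).
\end{equation*}
Composing with the equivalence coming from the isomorphism of algebras then yields $\text{qgr}(B^{G,\mu}) \simeq \text{coh}(\mathcal{E})$. The main point requiring care is checking that the ampleness hypothesis needed by the Artin--Stafford theorem is precisely the notion verified in Remark \ref{rem: ample}; beyond that, the corollary is a formal consequence of what has already been established.
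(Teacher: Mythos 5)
Your proposal is correct and takes essentially the same approach as the paper: the paper's proof is a one-liner that combines Theorem~\ref{theorem: geomdescthcrtwist} with the Artin--Stafford equivalence \cite[Corollary~6.11]{artin2000semiprime}, which is precisely the noncommutative Serre's theorem for twisted rings you invoke. You have simply unpacked the prerequisites (maximality of $\mathcal{E}$, invertibility of $\mathcal{B}_1$, ampleness of $\{\mathcal{B}_n\}$ via Remark~\ref{rem: ample}) that the paper leaves implicit; the only cosmetic mismatch is that the relevant result in \cite{artin2000semiprime} is Corollary~6.11 rather than \S 2--3.
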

\begin{proof}
Combine Theorem \ref{theorem: geomdescthcrtwist} with \cite[Corollary 6.11]{artin2000semiprime}.
\end{proof}

The geometric description of $B^{G,\mu}$ provided by Theorem \ref{thm: artinstaffordmain} allows us to explain and describe many of the properties of that ring. We begin by strengthening Proposition \ref{prop: semiprime}.
\begin{corollary}\label{cor: actuallyprime}
$B^{G,\mu}$ is prime.
\end{corollary}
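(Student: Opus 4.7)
The plan is to strengthen the semiprimeness of Proposition \ref{prop: semiprime} to primeness by exploiting the identification $B^{G,\mu} = M_2(B)^G$ from \eqref{eq: twistthcrinvariant}. Since $B$ is a domain, $M_2(B)$ is a prime noetherian ring, and $|G| = 4$ is invertible in $k$. By a classical theorem of Montgomery (e.g.\ \cite[Theorem 3.17]{montgomery1980fixd}), the fixed ring of a prime ring under a finite X-outer action is again prime provided $|G|^{-1}$ is available, so it suffices to verify that the $G$-action on $M_2(B)$ is X-outer.

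For this verification, I would build on Lemma \ref{lem: Xouter}, which already shows that $G$ acts outerly on the central simple $k(E)$-algebra $M_2(k(E))$. The graded Goldie quotient ring of $M_2(B)$ is the Ore extension $M_2(k(E))[z,z^{-1};\sigma]$, and any $G$-inner automorphism preserving the $\N$-grading must be implemented by a homogeneous element $T = T_d z^d$ with $T_d \in M_2(k(E))^{\times}$. Supposing some $g \in G \setminus \{e\}$ were conjugation by such a $T$, restricting to the degree-zero subring $M_2(k(E))$ would yield $a^{g\sigma^{-d}} = T_d^{-1} a T_d$ for all $a \in M_2(k(E))$, making $g\sigma^{-d}$ an inner automorphism of the central simple algebra $M_2(k(E))$. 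By Skolem-Noether this would force $g\sigma^{-d}$ to fix the center $k(E)$ pointwise, i.e.\ the finite-order action of $g$ on $k(E)$ would coincide with the action of a power of $\sigma$. Since $\sigma$ has infinite order on $E$ by Hypothesis \ref{hypsing: siginforder} and the $G$-action factors through the finite group $G$, this is only possible for $g = e$ and $d = 0$, contradicting $g \neq e$.

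The main obstacle is reconciling the graded-quotient-ring argument above with the symmetric Martindale quotient ring in which X-outerness is traditionally formulated. However, because the $G$-action preserves the $\N$-grading and the grading extends to the quotient ring, a standard homogenization of any putative X-inner implementing element allows one to reduce X-outerness on $M_2(B)$ to outerness on the graded Goldie quotient $M_2(k(E))[z,z^{-1};\sigma]$. With this reduction carried out, Montgomery's theorem immediately yields that $B^{G,\mu} = M_2(B)^G$ is prime.
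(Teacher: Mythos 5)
Your route is genuinely different from the paper's, and your two worries at the end are real. You apply Montgomery's fixed-ring theorem for prime rings (\cite[Theorem 3.17]{montgomery1980fixd}) directly to $M_2(B)$, which forces you to verify that $G$ is X-outer on $M_2(B)$; you attempt this via Skolem--Noether on the graded quotient $M_2(k(E))[z,z^{-1};\sigma]$. The paper instead stays inside the degree-zero component of the graded quotient of $B^{G,\mu}$: by Theorem \ref{theorem: geomdescthcrtwist} this is $M_2(k(E))^G$, Lemma \ref{lem: Xouter} already gives outerness of $G$ on the simple artinian ring $M_2(k(E))$, so Montgomery's Theorem 2.7(1) (the statement for simple artinian rings, not 3.17) shows $M_2(k(E))^G$ is simple artinian, and primeness of $B^{G,\mu}$ follows since its graded quotient is a skew Laurent extension of a simple artinian ring. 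The paper's strategy therefore sidesteps exactly the issue you flag: X-outerness for $M_2(B)$ lives in its \emph{ungraded} Goldie quotient $M_2(Q(B))$, not the graded one, and the ``standard homogenization'' reducing X-innerness to graded innerness by a homogeneous unit is an additional lemma that should be proved (it does work here because a nonzero homogeneous component of an intertwiner is automatically invertible in the simple artinian degree-zero ring, but this is not a one-liner) or cited. A second, smaller point: once you have forced $d=0$, you conclude $g=e$ from $g$ fixing $k(E)$ pointwise, but that needs either a remark that $G$ acts faithfully on $k(E)$ (which follows from Corollary \ref{cor: orbit4points}) or a second appeal to Lemma \ref{lem: Xouter} to rule out $g$ being inner on $M_2(k(E))$. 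Both gaps are fillable, so your proof can be completed, but the paper's argument makes them unnecessary.
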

\begin{proof}
By Theorem \ref{theorem: geomdescthcrtwist} we know that the degree 0 component of the graded quotient ring of $B^{G,\mu}$
is $M_2(k(E))^G$. If we can show that this ring is simple artinian then $B^{G,\mu}$ must be prime. By Lemma \ref{lem:
Xouter} the action of $G$ on $M_2(k(E))$ is outer. As $M_2(k(E))$ is simple artinian, we may apply \cite[Theorem
2.7(1)]{montgomery1980fixd} to conclude that $M_2(k(E))^G$ is also simple artinian, which completes the proof.
\end{proof}

One can explicitly describe the structure of $M_2(k(E))^G$, as the following result shows. In the proof we use the
notion of \emph{PI degree} defined in \cite[\S 13.6.7]{mcconnell2001noncommutative}.
\begin{corollary}\label{cor: shapeofsimpleart}
There is an isomorphism of $k$-algebras $M_2(k(E))^G \cong M_2(k(E^G))$, where $k(E^G)=k(E)^G$.
\end{corollary}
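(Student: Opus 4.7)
The plan is to identify $M_2(k(E))^G$ as a central simple algebra of dimension 4 over $k(E^G)$ and then rule out the possibility that it is a division algebra, for which a quick application of the nilpotent element of Remark \ref{rem: bgmunilp} will suffice.

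First I would verify that $M_2(k(E))^G$ is simple artinian with center $k(E^G)$. Simplicity follows as in the proof of Corollary \ref{cor: actuallyprime}: Lemma \ref{lem: Xouter} says the $G$-action on $M_2(k(E))$ is outer, so \cite[Theorem 2.7(1)]{montgomery1980fixd} applies. For the centre, recall $Z(M_2(k(E))) = k(E)$, and $G$ acts on this via the restriction of the action on $E$ (cf.\ Lemma \ref{lem: actionscoincide}), which identifies $k(E^G)$ with $k(E)^G$ by Lemma \ref{lem: ellcurve}; since the action on $M_2(k(E))$ is outer, $Z(M_2(k(E))^G) = Z(M_2(k(E)))^G = k(E^G)$.

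Next I would pin down the dimension over this centre. From $[k(E):k(E^G)]=|G|=4$ we get $\dim_{k(E^G)} M_2(k(E)) = 16$. By the Galois theory of X-outer actions on simple artinian rings (Montgomery, Chapters 2--3), $M_2(k(E))$ is free of rank $|G|=4$ over $M_2(k(E))^G$, so $\dim_{k(E^G)} M_2(k(E))^G = 4$. Hence $M_2(k(E))^G$ is a central simple $k(E^G)$-algebra of PI degree $2$, and by Artin--Wedderburn it is isomorphic to either $M_2(k(E^G))$ or a quaternion division algebra over $k(E^G)$.

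The main obstacle is ruling out the quaternion division algebra case. I would do this by using the nilpotent $v=v_0-iv_1-iv_2-v_3 \in B^{G,\mu}$ of Remark \ref{rem: bgmunilp} to produce a zero-divisor in $M_2(k(E))^G$. Under the embedding $B^{G,\mu} \hookrightarrow M_2(k(E))^G[z,z^{-1};\tilde{\tau}]$ from \eqref{eq: twistthcrinvariant}, the element $v$ lies in degree $1$, so $v = mz$ for a unique nonzero $m \in M_2(k(E))^G$. The identity $v^2 = 0$ then unfolds to $m\cdot m^{\tilde{\tau}}z^2 = 0$, forcing $m\cdot m^{\tilde{\tau}}=0$. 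Since $\tilde{\tau}$ is an automorphism, $m^{\tilde{\tau}}$ is nonzero, so $m$ is a nonzero left zero-divisor. Consequently $M_2(k(E))^G$ cannot be a division algebra and must be $M_2(k(E^G))$.
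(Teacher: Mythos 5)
Your proof is correct and follows essentially the same route as the paper's: both establish that $M_2(k(E))^G$ is a simple artinian ring of PI degree $2$ over $k(E^G)$ and then rule out the division-algebra possibility by appealing to the nilpotent element of Remark \ref{rem: bgmunilp}. The minor differences are that you invoke the Galois theory of outer actions to pin down the centre and the rank directly, while the paper instead notes $k(E^G) \subseteq D$ and deduces $D = k(E^G)$ from a dimension count; your explicit extraction of the zero-divisor $m$ from $v = mz$ is a useful unpacking of the paper's terser reference to non-regular homogeneous elements.
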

\begin{proof}
By Corollary \ref{cor: actuallyprime}, $M_2(k(E))^G$ is simple artinian and therefore isomorphic to a ring of the form
$M_n(D)$ for some division ring $D$. Considering the PI degree of $M_2(k(E))^G$ enables one
to conclude that either $n=2$ and $D$ is a field or $M_2(k(E))^G$ is a division ring that is 4-dimensional over its
centre. We can rule out the latter case since $M_2(k(E))^G$ is the degree 0 component of the graded quotient ring of
$B^{G,\mu}$, which contains non-regular homogeneous elements by Remark \ref{rem: bgmunilp}. Thus $D$ must be a field,
and therefore the centre of $M_2(D)$ is isomorphic to $D$ itself. Since $k(E^G)$ is central in $M_2(k(E))^G$, it must
embed in $D$ under the isomorphism $M_2(k(E))^G \cong M_2(D)$. 

To complete the proof, observe that $M_2(k(E))$ is a 4-dimensional module over $M_2(k(E))^G$ (on either the left or the
right), which itself is 4-dimensional over its centre $Z(M_2(k(E))^G) \cong D$. However, it is clear that $M_2(k(E))$ is
16-dimensional over $k(E^G)$, in which case one must have $D=k(E^G)$. 
\end{proof}

Our work in the remainder of this section concerns the irreducible objects in $\text{qgr}(B^{G,\mu})$. We begin with a lemma.
\begin{lemma}\label{lem: irrtail1crit}
The irreducible objects in $\text{qgr}(B^{G,\mu})$ are precisely the tails of 1-critical $B^{G,\mu}$-modules. 
\end{lemma}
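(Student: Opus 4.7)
The plan is to apply the standard Serre quotient formalism, combined with structural results on $B^{G,\mu}$ proved earlier in this section.

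For the forward direction, suppose $M \in \text{grmod}(B^{G,\mu})$ is 1-critical. Any subobject of $\pi(M)$ in $\text{qgr}(B^{G,\mu})$ lifts to a graded submodule $N \subseteq M$, with $\pi(N) = 0$ precisely when $N \in \text{fdmod}(B^{G,\mu})$ and $\pi(N) = \pi(M)$ precisely when $M/N \in \text{fdmod}(B^{G,\mu})$. For any $0 \neq N \subsetneq M$, 1-criticality forces $\text{GKdim}(M/N) < 1$, hence $M/N$ is finite-dimensional and $\pi(N) = \pi(M)$. Thus $\pi(M)$ admits only the trivial subobjects, so it is irreducible.

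For the converse, let $\mathcal{O} = \pi(M)$ be irreducible in $\text{qgr}(B^{G,\mu})$. Noetherianness of $B^{G,\mu}$ (Theorem \ref{thm: thcrtwistprops}(i)) ensures $M$ has a largest finite-dimensional graded submodule $T(M)$; I would replace $M$ by $M/T(M)$, which preserves $\pi(M)$ while eliminating all finite-dimensional submodules of $M$. Irreducibility of $\pi(M)$ then translates into the statement that every non-zero graded submodule $N \subseteq M$ satisfies $M/N \in \text{fdmod}(B^{G,\mu})$. To finish the argument it remains to verify $\text{GKdim}\,M = 1$, which combined with the above yields 1-criticality.

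The main obstacle is this GK dimension step. Since $M$ is infinite-dimensional and finitely generated, $\text{GKdim}\,M \geq 1$. For the reverse inequality I would appeal to the equivalence $\text{qgr}(B^{G,\mu}) \simeq \text{coh}(\mathcal{E})$ from Corollary \ref{cor: astfequivcat}: the image of $\pi(M)$ is a simple object of $\text{coh}(\mathcal{E})$. Since $\mathcal{E}$ is a sheaf of Dedekind orders on the smooth curve $E^G$ (Lemma \ref{lem: maxorder}(ii)), every simple $\mathcal{E}$-module is a torsion sheaf supported at a single closed point of $E^G$. Translating back through the equivalence, this forces the Hilbert series of $M$ to be bounded in high degree, so $\text{GKdim}\,M \leq 1$. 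Hence $\text{GKdim}\,M = 1$ and $M$ is 1-critical, as required.
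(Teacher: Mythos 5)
Your proof is correct and relies on the same key input as the paper, namely using the equivalence $\text{qgr}(B^{G,\mu}) \simeq \text{coh}(\mathcal{E})$ from Corollary \ref{cor: astfequivcat} to identify irreducible tails with skyscraper-type sheaves and thereby pin $\text{GKdim}\,M$ down to $1$. The only real difference is that you spell out the Serre-quotient bookkeeping (lifting subobjects, stripping off $T(M)$, both implications) that the paper leaves to the standard dictionary between irreducible objects in $\text{qgr}$ and critical modules, and you derive the upper bound $\text{GKdim}\,M \leq 1$ directly from $\text{coh}(\mathcal{E})$ rather than first quoting the general bound $\text{GKdim}\,M \leq \text{GKdim}\,B^{G,\mu} = 2$ from Krause--Lenagan.
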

\begin{proof}
Let us consider an irreducible object in $\text{qgr}(B^{G,\mu})$. Such an object will have the form $\pi(M)$ for some $M
\in \text{grmod}(B^{G,\mu})$. Theorem \ref{thm: thcrtwistprops}(ii) implies that $B^{G,\mu}$ has GK dimension 2, in
which case \cite[Proposition 5.1(d)]{krause2000growth} tells us that $\text{GKdim }M=0,1$ or 2. By Corollary \ref{cor: astfequivcat} the dimension must be 1, since in the proof of that result the images of skyscraper sheaves from $\text{coh}(\mathcal{E})$ will have dimension 1 by construction. 
\end{proof}

We can now give a more concrete description of the irreducible objects in $\text{qgr}(B^{G,\mu})$. 
\begin{proposition}\label{prop: onlyfatpoints}
Any irreducible object in $\text{qgr}(B^{G,\mu})$ is isomorphic to $\pi(M_p^2)$ for some $p \in E$, where $M_p^2$ is a
direct sum of point modules over $B$. In particular, such modules are 1-critical and are therefore fat point modules of
multiplicity 2.
\end{proposition}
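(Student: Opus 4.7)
The plan is to mimic the strategy used in the proof of Theorem \ref{thm: finitepointscheme}: transfer a 1-critical $B^{G,\mu}$-module through the matrix embeddings $B^{G,\mu} \hookrightarrow M_2(B)$ and $B \hookrightarrow M_2(B^{G,\mu})$, exploit the easy classification of 1-critical $B$-modules (they are tails of point modules, parameterised by $E$ via $\text{qgr}(B) \simeq \text{coh}(E)$), and transfer back. By Lemma \ref{lem: irrtail1crit}, an irreducible object has the form $\pi(M)$ for a 1-critical $M \in \text{grmod}(B^{G,\mu})$, so it suffices to exhibit $p \in E$ with $\pi(M) \cong \pi(M_p^2)$.

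A preliminary step is to verify that the fat point modules $M_p^2$ of Proposition \ref{prop: sklyaninfatpoints} descend to $B^{G,\mu} = A^{G,\mu}/(\Theta_1,\Theta_2)$ for $p \in E$. Using the explicit matrix embedding of Lemma \ref{lem: matrixgens} and squaring, one computes directly that $\Theta_i$ acts on $M_p^2$ as the scalar matrix $\mathrm{diag}(\Omega_i,\Omega_i)$; since $\Omega_1,\Omega_2$ vanish on $B$ and hence annihilate each point module $M_p$, the same holds for $\Theta_i$ on $M_p^2$. Thus $M_p^2$ is a fat point module of multiplicity 2 over $B^{G,\mu}$, and by Corollary \ref{cor: qgrisos} its tail depends only on the $G$-orbit of $p$.

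Now, given a 1-critical $M \in \text{grmod}(B^{G,\mu})$, form $M^2$ as a right $M_2(B^{G,\mu})$-module and restrict via $B \hookrightarrow M_2(B^{G,\mu})$ to obtain a $B$-module of GK dimension $1$. By Serre's theorem for the twisted homogeneous coordinate ring $B$, every torsion object of $\text{qgr}(B) \simeq \text{coh}(E)$ decomposes into tails of point modules, so
\begin{equation*}
\pi(M^2|_B) \;\cong\; \bigoplus_{i} \pi(M_{p_i})
\end{equation*}
for certain $p_i \in E$. Pick any $p = p_1$. Applying Proposition \ref{prop: fatpointsotherway}(i) (in the $B^{G,\mu}$-variant) to this $p$ gives $(M_p^2)^2|_B \cong \bigoplus_{g \in G} M_{p^g}$, i.e.\ the $B$-decomposition of the doubled fat point module exhibits exactly the $G$-orbit of $p$.

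To close the loop, restrict $(M^2|_B)^2$ via $B^{G,\mu} \hookrightarrow M_2(B)$ to obtain $M^4$ as a $B^{G,\mu}$-module. By the direct $B^{G,\mu}$-analogue of Proposition \ref{prop: fatpointsotherwaygen}, this restriction decomposes as $\bigoplus_{g \in G} M^g$, whose qgr-class is a multiple of $\pi(M)$ in view of Corollary \ref{cor: qgrisos}. On the other hand, applying the same double restriction to each summand in the decomposition of $\pi(M^2|_B)$ and invoking the previous paragraph yields a qgr-decomposition into summands of the form $\pi(M_{p_i}^2)$. The uniqueness of critical composition factors in high degree for GK-dimension-1 modules (\cite[Proposition 1.5]{smith1992the}) then forces $\pi(M) \cong \pi(M_p^2)$ for some $p \in E$, completing the proof. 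The main technical obstacle is the $G$-equivariance bookkeeping in the iterated restriction step, in particular verifying that the second doubling really produces $\bigoplus_g M^g$ and that these $G$-twisted copies coincide in qgr with $M$; this follows the same pattern as Propositions \ref{prop: fatpointsotherwaygen}--\ref{prop: fatpointsotherway}, but requires careful handling of the matrix embeddings at both stages.
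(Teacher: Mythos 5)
Your overall architecture---pass $M$ through the matrix embeddings, reduce to the well-understood 1-critical $B$-modules, and transfer back by uniqueness of critical composition factors---is a genuinely different route from the paper's. The paper works instead with the \emph{tensor extension} $N \otimes_{B^{G,\mu}} M_2(B)$, extracts a single 1-critical $M_2(B)$-submodule $I$ whose tail is $\pi(M_p^2)$, then decomposes the extension over $B^{G,\mu}$ as $\bigoplus_{g \in G} N^{\phi_g}$ via \cite[Lemma 4.6]{davies2014cocycle1}, matches $\pi(M_p^2)$ with some $\pi(N^{\phi_g})$ by the uniqueness of critical composition factors, and finally \emph{untwists}: $\pi((M_p^2)^{\phi_g^{-1}}) \cong \pi(N)$, observing that the untwisted module is again of the form $M_q^2$ for some $q \in E$. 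Your preliminary observation that $\Theta_i$ acts on $M_p^2$ as $\mathrm{diag}(\Omega_i,\Omega_i)$ and hence vanishes is correct and a perfectly good alternative to the paper's appeal to the untwisted factor-ring isomorphism of Lemma \ref{lem: twistthcr}, though neither is strictly needed once $p\in E$ is produced inside the argument.

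There are, however, two concrete gaps. First, the claimed decomposition $\pi(M^2|_B) \cong \bigoplus_i \pi(M_{p_i})$ is not true in general: under $\text{qgr}(B) \simeq \text{coh}(E)$, a GK-dimension-1 $B$-module corresponds to a torsion sheaf on the smooth curve $E$, and torsion sheaves supported at a point need not be semisimple (e.g.\ $\calO_E/\mathfrak{m}_p^2$ is not a direct sum of skyscrapers). What one actually has is a critical composition series whose factors are tails of point modules, which is why the paper extracts a single 1-critical submodule $I$ rather than asserting a direct-sum decomposition. Your argument can be repaired by replacing the direct-sum claim with the composition-series statement throughout, but as written the step fails.

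Second, the assertion that $\bigoplus_{g\in G} M^g$ has ``qgr-class a multiple of $\pi(M)$ in view of Corollary \ref{cor: qgrisos}'' is a misapplication: that corollary concerns only the modules $M_p^2$ (the fat points built from $p\in E$), and says nothing about an arbitrary 1-critical $B^{G,\mu}$-module $M$ and its twists $M^g$. In general $\pi(M^g) \not\cong \pi(M)$. The paper handles exactly this point by explicitly untwisting at the end: having deduced $\pi(M_p^2)\cong\pi(N^{\phi_g})$ for \emph{some} $g$, it applies $\phi_g^{-1}$ and then checks (via the Hilbert series and the matrix description) that $(M_p^2)^{\phi_g^{-1}}$ is again of the form $M_q^2$. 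You should do the analogous untwisting step; as written, your final appeal to uniqueness of composition factors only yields $\pi(M^g)\cong\pi(M_q^2)$ for some $g$ and $q$, not $\pi(M)\cong\pi(M_q^2)$, and the passage from the former to the latter requires precisely the untwist you have elided. Finally, note that the ``direct $B^{G,\mu}$-analogue of Proposition \ref{prop: fatpointsotherwaygen}'' does not literally apply (that result is proved only for point modules via the explicit $4\times 4$ matrix calculation); the correct general statement that restricting $N\otimes_{B^{G,\mu}}M_2(B)$ to $B^{G,\mu}$ yields $\bigoplus_g N^{\phi_g}$ is \cite[Lemma 4.6]{davies2014cocycle1}, which is what you should cite.
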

\begin{proof}
By Lemma \ref{lem: irrtail1crit}, any irreducible object in $\text{qgr}(B^{G,\mu})$ has the form $\pi(N)$ for some
1-critical $B^{G,\mu}$-module $N$. Consider the extension $N \otimes_{B^{G,\mu}} M_{2}(B)$ as a right
$M_2(B)$-module, which is $\N$-graded via the standard grading on a tensor product of graded modules. One may argue in
the same manner as in the proof of \cite[Proposition 4.32(i)]{davies2014cocycle1} to show that $N \otimes_{B^{G,\mu}} M_{2}(B)$ has
GK dimension 1. This $M_{2}(B)$-module is noetherian, hence there is a 1-critical $M_{2}(B)$-module, $I$ say, that
embeds in $\left(N \otimes_{B^{G,\mu}} M_{2}(B)\right)_{\geq n}$ for some sufficiently large $n \in \N$. One therefore
has $\pi(I) \hookrightarrow \pi(N \otimes_{B^{G,\mu}} M_{2}(B))$ in $\text{qgr}(M_2(B))$. The 1-critical modules over
$B$ are isomorphic in high degree to point modules by the equivalence of categories $\text{qgr}(B) \simeq
\text{coh}(\calO_E)$ from \cite[Theorem 1.3]{artin1990twisted}. Thus we may replace $\pi(I)$ with $\pi(M_p^2)$ for some
$p \in E$.

Let us now restrict from $\text{qgr}(M_2(B))$ to $\text{qgr}(B^{G,\mu})$. By \cite[Lemma 4.6]{davies2014cocycle1} one has 
\begin{gather}
\begin{aligned}\label{eq: restinject}
\pi(M_p^2) \hookrightarrow \pi(N \otimes_{B^{G,\mu}} M_2(B)) &\cong \pi\left(N \otimes_{B^{G,\mu}} \left(\bigoplus_{g \in
G} {^{\text{id}}}(B^{G,\mu})^{\phi_g} \right)\right) \\
&\cong \pi\left(\bigoplus_{g \in G} N^{\phi_g}\right). 
\end{aligned}
\end{gather}

The summands $N^{\phi_g}$ are 1-critical $B^{G,\mu}$-modules, and moreover they are the factors in a critical
composition series for $(N \otimes_{B^{G,\mu}} M_2(B))_{B^{G,\mu}}$. By \cite[Proposition 1.5]{smith1992the}, these
factors are unique up to permutation and isomorphism in high degree. However, we claim that the right $M_2(B)$-module
$M_p^2$ remains 1-critical upon restriction to a module over $B^{G,\mu}$, in which case $\pi(M_p^2) \cong
\pi(N^{\phi_{g}})$ for some $g \in G$. To prove this claim, observe that the module $M_p$ is also a point module over
$A$, thus $M_p^2$ can be considered as an $M_2(A)$-module that has been restricted first to $A^{G,\mu}$ and then to the
factor ring $B^{G,\mu}$. Thus Proposition \ref{prop: fatpoints} is applicable and implies that $M_p^2$ is a 1-critical
$B^{G,\mu}$-module. Since such a module has Hilbert series $2/(1-t)$, it must be a fat point module of multiplicity 2.

By untwisting one has $\pi((M_p^2)^{\phi_g^{-1}}) \cong \pi(N)$ in $\text{qgr}(B^{G,\mu})$. Note that the $G$-graded
automorphism $\phi_g$ extends naturally from $B^{G,\mu}$ to $M_2(B)$ via the $G$-grading on $B$. Thus we may consider
$(M_p^2)^{\phi_g^{-1}}$ as a right $M_2(B)$-module, in which case there exists $L \in \text{grmod}(B)$ such that
$(M_p^2)^{\phi_g^{-1}} \cong L^2$ in $\text{grmod}(B)$. By considering Hilbert series it is clear that $L$ must be a
point module. On restriction to $B^{G,\mu}$, one obtains an isomorphism $\pi(M_q^2) \cong \pi(N)$ in
$\text{qgr}(B^{G,\mu})$ for some $q \in E$. This completes the proof. 
\end{proof}

Proposition \ref{prop: onlyfatpoints} enables us to describe the geometric origins of the fat point modules of multiplicity 2 that we have studied.
\begin{proposition}\label{prop: fatpointsincohE}
For $p \in E$ consider the coherent sheaf of right $M_2(\calO_E)$-modules $k_p^2$, where
$k_p:=\calO_E/\mathcal{I}_p$ is a skyscraper sheaf supported at
$p$. Under a natural functor $\phi: \text{coh}(M_2(\mathcal{O}_E)) \rightarrow \text{coh}(\mathcal{E})$, the irreducible
objects in $\text{coh}(\mathcal{E})$ are of the form $\phi(k_p^2)$. Furthermore, $\phi(k_p^2) \cong \phi(k_q^2)$ if and
only if $q \in [p]$.
\end{proposition}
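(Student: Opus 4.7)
The plan is to prove the proposition by transporting Proposition~\ref{prop: onlyfatpoints} and (the $B^{G,\mu}$-analogue of) Corollary~\ref{cor: qgrisos} across the equivalence $\text{qgr}(B^{G,\mu}) \simeq \text{coh}(\mathcal{E})$ from Corollary~\ref{cor: astfequivcat}. The conceptual point is that every ingredient needed has already been assembled; the work is identifying the functor $\phi$ and checking that it intertwines the Artin--Zhang equivalence for $B$ on one side and the Artin--Stafford equivalence for $B^{G,\mu}$ on the other.

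First I would define $\phi$ explicitly. Since $\mathcal{E} = M_2(r_*\calO_E)^G$ and $r_*M_2(\calO_E) = M_2(r_*\calO_E)$, the tautological inclusion $\mathcal{E} \hookrightarrow r_*M_2(\calO_E)$ lets us define $\phi(\mathcal{F}) := r_*\mathcal{F}$, regarded as a coherent right $\mathcal{E}$-module via restriction of scalars. For $\mathcal{F}=k_p^2$ this is a sheaf on $E^G$ supported at the single point $[p]$; the action of $\mathcal{E}$ factors through its stalk at $[p]$, which by construction absorbs the $G$-action permuting the preimages $r^{-1}([p]) = [p] \subset E$.

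Second, I would match $\phi(k_p^2)$ with the irreducible object of $\text{qgr}(B^{G,\mu})$ coming from $\pi(M_p^2)$. The Artin--Zhang equivalence $\text{qgr}(B) \simeq \text{coh}(E)$ from \cite[Theorem 1.3]{artin1990twisted} sends the point module $M_p$ to $k_p$, and its matrix-algebra extension sends $M_p^2$, viewed as a right $M_2(B)$-module, to $k_p^2$. The global-sections construction underlying Theorem~\ref{theorem: geomdescthcrtwist} is exactly the $G$-invariants-after-pushforward procedure, so chasing $M_p^2$ through
\begin{equation*}
\text{qgr}(M_2(B)) \;\longrightarrow\; \text{qgr}(B^{G,\mu}) \;\xrightarrow{\;\Phi\;}\; \text{coh}(\mathcal{E})
\end{equation*}
(where the first arrow is restriction along $B^{G,\mu}\subset M_2(B)$, and $\Phi$ is the equivalence of Corollary~\ref{cor: astfequivcat}) and $k_p^2$ through $\phi$ produces the same object. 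By Proposition~\ref{prop: onlyfatpoints} every irreducible object of $\text{qgr}(B^{G,\mu})$ is of the form $\pi(M_p^2)$, hence via $\Phi$ every irreducible object of $\text{coh}(\mathcal{E})$ is of the form $\phi(k_p^2)$.

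Finally, for the isomorphism criterion I need the $B^{G,\mu}$-version of Corollary~\ref{cor: qgrisos}: $\pi(M_p^2) \cong \pi(M_q^2)$ in $\text{qgr}(B^{G,\mu})$ iff $q\in[p]$. The proof of Corollary~\ref{cor: qgrisos} transfers verbatim, using that $M_p^2$ remains $1$-critical over $B^{G,\mu}$ (established inside the proof of Proposition~\ref{prop: onlyfatpoints}), Corollary~\ref{cor: fatpointisoclasses} for the ``if'' direction, and the unique-up-to-permutation-and-isomorphism statement of \cite[Proposition 1.5]{smith1992the} combined with Lemma~\ref{lem: actionscoincide} (which ensures that the $G$-action on $E$ commutes with $\sigma$) for the ``only if'' direction. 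Transferring this equivalence under $\Phi$ yields $\phi(k_p^2) \cong \phi(k_q^2)$ iff $q\in[p]$.

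The main obstacle is the second step: making precise the naturality diagram that compares the two Artin--Zhang/Artin--Stafford equivalences. One must either verify on generators (degree $1$ pieces and their actions, which are already known explicitly from Lemma~\ref{lem: matrixgens} and Theorem~\ref{theorem: geomdescthcrtwist}) or invoke a uniqueness result for such equivalences; the rest of the argument is then formal.
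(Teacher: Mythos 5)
Your overall strategy coincides with the paper's: push $\pi(M_p^2)$ through the equivalences and use Proposition~\ref{prop: onlyfatpoints} for exhaustion and Corollary~\ref{cor: qgrisos} for the isomorphism criterion. The one genuine divergence is your first step, where you propose to \emph{define} $\phi$ geometrically as $r_*(-)$ with restriction of scalars along $\mathcal{E} \hookrightarrow r_*M_2(\calO_E)$, and you then correctly flag ``the main obstacle'' of checking that this $\phi$ intertwines the Artin--Zhang equivalence for $M_2(B)$ and the Artin--Stafford equivalence $\varphi_3:\text{qgr}(B^{G,\mu}) \to \text{coh}(\mathcal{E})$. The paper dissolves that obstacle by fiat: it takes $\phi$ to be, by definition, the clockwise composite $\varphi_3\circ\varphi_2\circ\varphi_1$ (where $\varphi_1: \text{coh}(M_2(\calO_E))\to\text{qgr}(M_2(B))$ is the equivalence of \cite[Corollary 6.11]{artin2000semiprime} and $\varphi_2$ is restriction along $B^{G,\mu}\subset M_2(B)$). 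With that definition the square commutes tautologically, $\varphi_1(k_p^2)=\pi(M_p^2)$ gives the matching you wanted, and there is nothing left to verify. Your route buys a concrete geometric formula for $\phi$, which is appealing, but at the cost of the compatibility check you yourself note is the hard part; as written it is a gap, whereas the paper's diagrammatic definition makes the result cheap.

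On the isomorphism criterion: you ask for a ``$B^{G,\mu}$-analogue'' of Corollary~\ref{cor: qgrisos}, which is stated over $A^{G,\mu}$, and you propose re-running its proof. The paper simply invokes Corollary~\ref{cor: qgrisos} directly. The implicit justification is that the modules $M_p^2$ ($p \in E$) are annihilated by $(\Theta_1,\Theta_2)$, so they are inflated from $B^{G,\mu}$ and isomorphism in $\text{grmod}(A^{G,\mu})$ agrees with isomorphism in $\text{grmod}(B^{G,\mu})$ (and similarly after passing to $\text{qgr}$). Either way of phrasing it is fine; your instinct to be explicit about the $A^{G,\mu}$ vs.\ $B^{G,\mu}$ distinction is reasonable, but it does not require re-proving the corollary from scratch.
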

\begin{proof}
By \cite[Corollary 6.11]{artin2000semiprime} there is an equivalence of categories 
$$\varphi_1: \text{coh}(M_2(\mathcal{O}_E)) \rightarrow \text{qgr}(M_2(B)).$$
We label the equivalence from Corollary \ref{cor: astfequivcat} by $\varphi_3: \text{qgr}(B^{G,\mu}) \rightarrow \text{coh}(\mathcal{E})$. Finally, let $\varphi_2$ denote
the functor from $\text{qgr}(M_2(B))$ to $\text{qgr}(B^{G,\mu})$ obtained by the restriction of modules. Consider the
diagram below:
\begin{equation}\label{eq: cohqgrdiag}
\centering
\begin{tikzpicture}
\draw [->] (1.25,0) -- (1.85,0); 
\node[below] at (1.55,0) {\tiny{$\sim$}};
\node[above] at (1.55,0) {\footnotesize{$\varphi_1$}};
\draw [dashed,->] (0,-0.25) -- (0,-1.25); 
\node[left] at (0,-0.75) {\footnotesize{$\phi$}};
\draw [->] (3,-0.25) -- (3,-1.25); 
\node[right] at (3,-0.75) {\footnotesize{$\varphi_2$}};
\draw [<-] (0.65,-1.5) -- (2,-1.5);
\node[above] at (1.325,-1.5) {\tiny{$\sim$}}; 
\node[below] at (1.325,-1.5) {\footnotesize{$\varphi_3$}};
\node at (0,0) {$\text{coh}(M_2(\mathcal{O}_E))$};
\node at (0,-1.5) {$\text{coh}(\mathcal{E})$};
\node at (3,0) {$\text{qgr}(M_2(B))$};
\node at (3,-1.5) {$\text{qgr}(B^{G,\mu})$};
\end{tikzpicture}
\end{equation} 
We denote the clockwise composition of functors beginning at $\text{coh}(M_2(\mathcal{O}_E))$ by $\phi$. 

Let us consider an irreducible object in $\calF \in \text{coh}(\mathcal{E})$. By Proposition \ref{prop:
onlyfatpoints} we know that $\varphi_3(\varphi_2(\pi(M_p^2)))=\calF$ for some $p \in E$. One also has
$\varphi_1(k^2_p)=\pi(M_p^2)$, therefore $\phi(k_p^2)=\calF$. Since $\varphi_1$ and $\varphi_3$ are equivalences, the
isomorphisms among the sheaves $\phi(k_p^2)$  are governed by the restriction functor $\varphi_2$. Corollary \ref{cor:
qgrisos} describes the only such isomorphisms that can occur, and this completes the proof.
\end{proof}

\bibliographystyle{abbrv}
\bibliography{allrefs}

\def\cprime{$'$} \def\cprime{$'$} \def\cprime{$'$} \def\cprime{$'$}
  \def\cprime{$'$} \def\cprime{$'$} \def\cprime{$'$} \def\cprime{$'$}
  \def\cprime{$'$} \def\cprime{$'$}
\begin{thebibliography}{10}

\bibitem{artin1990geometry}
M.~Artin.
\newblock Geometry of quantum planes.
\newblock In {\em Azumaya algebras, actions, and modules ({B}loomington, {IN},
  1990)}, volume 124 of {\em Contemp. Math.}, pages 1--15. Amer. Math. Soc.,
  Providence, RI, 1992.

\bibitem{artin1987graded}
M.~Artin and W.~F. Schelter.
\newblock Graded algebras of global dimension {$3$}.
\newblock {\em Adv. in Math.}, 66(2):171--216, 1987.

\bibitem{artin1999generic}
M.~Artin, L.~W. Small, and J.~J. Zhang.
\newblock Generic flatness for strongly {N}oetherian algebras.
\newblock {\em J. Algebra}, 221(2):579--610, 1999.

\bibitem{artin1995noncommutative}
M.~Artin and J.~T. Stafford.
\newblock Noncommutative graded domains with quadratic growth.
\newblock {\em Invent. Math.}, 122(2):231--276, 1995.

\bibitem{artin2000semiprime}
M.~Artin and J.~T. Stafford.
\newblock Semiprime graded algebras of dimension two.
\newblock {\em J. Algebra}, 227(1):68--123, 2000.

\bibitem{artin1991modules}
M.~Artin, J.~Tate, and M.~Van~den Bergh.
\newblock Modules over regular algebras of dimension {$3$}.
\newblock {\em Invent. Math.}, 106(2):335--388, 1991.

\bibitem{artin1990twisted}
M.~Artin and M.~Van~den Bergh.
\newblock Twisted homogeneous coordinate rings.
\newblock {\em J. Algebra}, 133(2):249--271, 1990.

\bibitem{bazlov2012cocycle}
Y.~Bazlov and A.~Berenstein.
\newblock Cocycle twists and extensions of braided doubles.
\newblock {\em Noncommutative Birational Geometry, Representations and
  Combinatorics, in: Contemp. Math}, 592:19--70, 2012.

\bibitem{cassidy2010generlizations}
T.~Cassidy and M.~Vancliff.
\newblock Generalizations of graded {C}lifford algebras and of complete
  intersections.
\newblock {\em J. Lond. Math. Soc. (2)}, 81(1):91--112, 2010.

\bibitem{chirvasitu2015exotic}
A.~Chirvasitu and S.~P. Smith.
\newblock Exotic elliptic algebras.
\newblock {\em arXiv preprint arXiv:1502.01744}, 2015.

\bibitem{davies2014cocycle1}
A.~Davies.
\newblock Cocycle twists of algebras.
\newblock {\em arXiv preprint, arXiv:1504.06299}, 2014.

\bibitem{davies2014thesis}
A.~Davies.
\newblock Cocycle twists of algebras, Ph.D. Thesis, University of Manchester,
  July 22nd 2014,
  \url{https://www.escholar.manchester.ac.uk/api/datastream?publicationPid=uk-ac-man-scw:229719&datastreamId=FULL-TEXT.PDF}.

\bibitem{hartshorne1977algebraic}
R.~Hartshorne.
\newblock {\em Algebraic geometry}.
\newblock Springer-Verlag, New York, 1977.
\newblock Graduate Texts in Mathematics, No. 52.

\bibitem{krause2000growth}
G.~R. Krause and T.~H. Lenagan.
\newblock {\em Growth of algebras and {G}elfand-{K}irillov dimension},
  volume~22 of {\em Graduate Studies in Mathematics}.
\newblock American Mathematical Society, Providence, RI, revised edition, 2000.

\bibitem{lebruyn1995central}
L.~Le~Bruyn.
\newblock Central singularities of quantum spaces.
\newblock {\em J. Algebra}, 177(1):142--153, 1995.

\bibitem{levasseur1992some}
T.~Levasseur.
\newblock Some properties of noncommutative regular graded rings.
\newblock {\em Glasgow Math. J.}, 34(3):277--300, 1992.

\bibitem{levasseur1993modules}
T.~Levasseur and S.~P. Smith.
\newblock Modules over the {$4$}-dimensional {S}klyanin algebra.
\newblock {\em Bull. Soc. Math. France}, 121(1):35--90, 1993.

\bibitem{mcconnell2001noncommutative}
J.~C. McConnell and J.~C. Robson.
\newblock {\em Noncommutative {N}oetherian rings}.
\newblock Pure and Applied Mathematics (New York). John Wiley \& Sons Ltd.,
  Chichester, 1987.
\newblock With the cooperation of L. W. Small, A Wiley-Interscience
  Publication.

\bibitem{montgomery1980fixd}
S.~Montgomery.
\newblock {\em Fixed rings of finite automorphism groups of associative rings},
  volume 818 of {\em Lecture Notes in Mathematics}.
\newblock Springer, Berlin, 1980.

\bibitem{odesskii2002elliptic}
A.~V. Odesski{\u\i}.
\newblock Elliptic algebras.
\newblock {\em Uspekhi Mat. Nauk}, 57(6(348)):87--122, 2002.

\bibitem{pym2015quantum}
B.~Pym.
\newblock Quantum deformations of projective three-space.
\newblock {\em Adv. Math.}, 281:1216–--1241, 2015.

\bibitem{rogalski2006proj}
Z.~Reichstein, D.~Rogalski, and J.~J. Zhang.
\newblock Projectively simple rings.
\newblock {\em Adv. Math.}, 203(2):365--407, 2006.

\bibitem{rogalski2008canonical}
D.~Rogalski and J.~J. Zhang.
\newblock Canonical maps to twisted rings.
\newblock {\em Math. Z.}, 259(2):433--455, 2008.

\bibitem{shelton2001koszul}
B.~Shelton and C.~Tingey.
\newblock On {K}oszul algebras and a new construction of {A}rtin-{S}chelter
  regular algebras.
\newblock {\em J. Algebra}, 241(2):789--798, 2001.

\bibitem{shelton1999embedding}
B.~Shelton and M.~Vancliff.
\newblock Embedding a quantum rank three quadric in a quantum {$\mathbb{P}^3$}.
\newblock {\em Comm. Algebra}, 27(6):2877--2904, 1999.

\bibitem{shelton1999some}
B.~Shelton and M.~Vancliff.
\newblock Some quantum {$\mathbb{P}^3$}'s with one point.
\newblock {\em Comm. Algebra}, 27(3):1429--1443, 1999.

\bibitem{smith1992the}
S.~P. Smith.
\newblock The 4-dimensional {S}klyanin algebra at points of finite order.
\newblock {\em preprint}, 1992.

\bibitem{smith1994four}
S.~P. Smith.
\newblock The four-dimensional {S}klyanin algebras.
\newblock In {\em Proceedings of {C}onference on {A}lgebraic {G}eometry and
  {R}ing {T}heory in honor of {M}ichael {A}rtin, {P}art {I} ({A}ntwerp, 1992)},
  volume~8, pages 65--80, 1994.

\bibitem{smith1992regularity}
S.~P. Smith and J.~T. Stafford.
\newblock Regularity of the four-dimensional {S}klyanin algebra.
\newblock {\em Compositio Math.}, 83(3):259--289, 1992.

\bibitem{smith1993irreducible}
S.~P. Smith and J.~M. Staniszkis.
\newblock Irreducible representations of the {$4$}-dimensional {S}klyanin
  algebra at points of infinite order.
\newblock {\em J. Algebra}, 160(1):57--86, 1993.

\bibitem{smith1994center}
S.~P. Smith and J.~Tate.
\newblock The center of the {$3$}-dimensional and {$4$}-dimensional {S}klyanin
  algebras.
\newblock In {\em Proceedings of {C}onference on {A}lgebraic {G}eometry and
  {R}ing {T}heory in honor of {M}ichael {A}rtin, {P}art {I} ({A}ntwerp, 1992)},
  volume~8, pages 19--63, 1994.

\bibitem{stafford1994regularity}
J.~T. Stafford.
\newblock Regularity of algebras related to the {S}klyanin algebra.
\newblock {\em Trans. Amer. Math. Soc.}, 341(2):895--916, 1994.

\bibitem{stafford2001noncommutative}
J.~T. Stafford and M.~Van~den Bergh.
\newblock Noncommutative curves and noncommutative surfaces.
\newblock {\em Bull. Amer. Math. Soc. (N.S.)}, 38(2):171--216, 2001.

\bibitem{tate1996homological}
J.~Tate and M.~Van~den Bergh.
\newblock Homological properties of {S}klyanin algebras.
\newblock {\em Invent. Math.}, 124(1-3):619--647, 1996.

\bibitem{van1996translation}
M.~Van~den Bergh.
\newblock A translation principle for the four-dimensional {S}klyanin algebras.
\newblock {\em J. Algebra}, 184(2):435--490, 1996.

\bibitem{vancliff1998some}
M.~Vancliff, K.~Van~Rompay, and L.~Willaert.
\newblock Some quantum {${\mathbb P}^3$}'s with finitely many points.
\newblock {\em Comm. Algebra}, 26(4):1193--1208, 1998.

\bibitem{zhang1998twisted}
J.~J. Zhang.
\newblock Twisted graded algebras and equivalences of graded categories.
\newblock {\em Proc. London Math. Soc. (3)}, 72(2):281--311, 1996.

\end{thebibliography}
\end{document}